\newtheorem{theorem}{Theorem}[section]
\theoremstyle{plain}
\newtheorem{question}{Question}
\newtheorem{corollary}[theorem]{Corollary}
\newtheorem{example}[theorem]{Example}
\newtheorem{lemma}[theorem]{Lemma}
\newtheorem{proposition}[theorem]{Proposition}
\newtheorem{remark}[theorem]{Remark}
\numberwithin{equation}{section}
\def \R{\mathbb R}
\def \S{\mathbb S}
\def \N{\mathbb N}
\def \H{\mathbb H}
\def \S{\mathbb{S} }
\def \grad{\text{grad}}
\begin{document}
\title[An ODE reduction method for the semi-Riemannian
Yamabe problem.]{An ODE reduction method for the semi-Riemannian
	Yamabe problem on space forms.}

\author{Juan Carlos Fern\'{a}ndez}
\address{Departamento de Matem\'{a}ticas, Facultad de Ciencias, Universidad Nacional Aut\'{o}noma de M\'{e}xico, CP 04510, M\'{e}xico}
\email{jcfmor@ciencias.unam.mx}
\author{Oscar Palmas}
\address{Departamento de Matem\'{a}ticas, Facultad de Ciencias, Universidad Nacional Aut\'{o}noma de M\'{e}xico, CP 04510, M\'{e}xico}
\email{oscar.palmas@ciencias.unam.mx}
\thanks{J.C. Fernández was supported by a postdoctoral fellowship from UNAM-DGAPA}
\thanks{O. Palmas was partially supported by UNAM under Project PAPIIT-DGAPA IN115119}
\date{\today}

\begin{abstract}
	We consider the semi-Riemannian Yamabe type equations of the form
	\[
	-\square u + \lambda u = \mu \vert u\vert^{p-1}u\quad\text{ on }M
	\]
where $M$ is either the semi-Euclidean space or the pseudosphere of dimension $m\geq 3$, $\square$ is the semi-Riemannian Laplacian in $M$, $\lambda\geq0$, $\mu\in\R\smallsetminus\{0\}$ and $p>1$. Using semi-Riemannian isoparametric functions on $M$, we reduce the PDE into a generalized Emden-Fowler ODE of the form
\[
w''+q(r)w'+\lambda w = \mu\vert w\vert^{p-1}w\quad\text{ on } I,
\]
where $I\subset\R$ is $[0,\infty)$ or $[0,\pi]$, $q(r)$ blows-up at $0$ and $w$ is subject to the natural initial conditions $w'(0)=0$ in the first case and $w'(0)=w'(\pi)=0$ in the second. We prove the existence of blowing-up and globally defined solutions to this problem, both positive and sign-changing, inducing solutions to the semi-Riemannian Yamabe type problem with the same qualitative properties, with level and critical sets described in terms of semi-Riemannian isoparametric hypersurfaces and focal varieties. In particular, we prove the existence of sign-changing blowing-up solutions to the semi-Riemannian Yamabe problem in the pseudosphere having a prescribed number of nodal domains.

\textsc{Key words and phrases: } Semi-Riemannian Yamabe Problem, Generalized Emden-Fowler equations, Blowing-up solutions, nodal solutions, semi-Riemannian isoparametric hypersurfaces.

\textsc{2010 MSC: } 

Primary: 34B16, 35B06, 53C21, 53C50, 58J45. 

Secondary:
35B08, 35B33, 35B44,  53C40.

\bigskip

\end{abstract}
\maketitle

\section{\textbf{Introduction}}

We say that two semi-Riemannian metrics $g$ and $\hat{g}$ on a manifold $M$ of dimension $m\geq 3$ are conformal if there exists a positive function $\rho\in\mathcal{C}^\infty(M)$ such that $\hat{g}=\rho g$. Given a 
semi-Riemannian manifold $(M,g)$ without boundary and signature $s\geq 0$, the semi-Riemannian Yamabe problem consists in finding a metric $\hat g$ conformal to $g$ in such a way that 
 the scalar curvature of $(M,\hat{g})$ is constant. If we consider a conformal metric $\hat{g}=u^{\frac{4}{m-2}} g$ and denote by $R_g$ and $R_{\hat{g}}$ the scalar curvatures with respect to $g$ and $\hat{g}$, respectively, then the following relation between them holds true (see Section 1.J in \cite{Be})
\[
R_{\hat{g}}=u^{-\frac{m+2}{m-2}}\left[-4\frac{m-1}{m-2}\square_{g}u + R_g u\right].
\]
where $\square_g=-\text{div}_g\nabla_g$ denotes the Laplacian on the semi-Riemannian manifold $(M,g)$. If we look for $R_{\hat{g}}$ to be constant, say equal to $\mu\in\R$, by the above relation, we obtain that the Yamabe problem is equivalent to finding a positive solution to the PDE
\begin{equation}\label{Eq:MainLorYam}
-\square_{g}u + \mathfrak{c}_mR_g u=\mu \vert u\vert^{p_m-1}\qquad \text{ on }\ M
\end{equation}
with critical Sobolev exponent $p_m:=\frac{m+2}{m-2}$ and where $\mathfrak{c}_m=\frac{m-2}{4(m-1)}$. 

When the signature of the metric $g$ is $s=0$, then the Laplacian is just the usual Laplace-Beltrami operator, $\square_g=\Delta_g$, and the semi-Riemannian Yamabe problem reduces to the usual Yamabe equation on complete Riemannian manifolds, which is an elliptic PDE widely studied in the last sixty years (see, for instance the book and surveys \cite{Au1,BrMa1,LePa} and the references therein). When the signature $s=1$, $(M,g)$ is a Lorentzian manifold, $\square_g$ is the D'Alembert operator and the nonlinear wave equation equation \eqref{Eq:MainLorYam} is referred as the Lorentzian or hyperbolic Yamabe equation \cite{Gi,KoLi}. For $s\geq 2$, the operator $\square_g$ is ultrahyperbolic. 

Hereafter, we will also denote the semi-Riemannian metric $g$ as $\langle\cdot,\cdot\rangle_g$ or simply by $\langle\cdot,\cdot\rangle$ if there is no risk of confusion. In what follows, we will denote $z\in M$ as $z=(\bar{t},\bar{x})$ in coordinates, where $\bar{t}=(t_1,\ldots,t_s)$ will be called \emph{time variables}, while $\bar{x}=(x_1,\ldots,x_{m-s})$ will be called the \emph{space variables}, or simply by $z=(z_1,\ldots,z_m)$ in convenience. 

In case of the Minkowski space $\R^{m}_1$, equation \eqref{Eq:MainLorYam} takes the more familiar form
\begin{equation}\label{Eq:CritEnWaveEq}
\frac{\partial^2 u}{\partial t^2} - \Delta u = \mu \vert u\vert^{p_m-1}u\qquad\text{ in }\ \R_1^1\times\R^{m-1}.
\end{equation}
This is the so called \emph{critical energy nonlinear wave equation} widely studied (see, for instance \cite{Ke,KeMe}  and the references therein). More generally, for the case of the semi-Euclidean space $\R_s^m\equiv\R^s_s\times\R^{m-s}$ with coordinates $(\bar{t},\bar{x})=(t_1,\ldots,t_s,x_1,\ldots,x_{m-s})$, the semi-Riemannian Yamabe problem can be written as an ultrahyperbolic nonlinear equation
\begin{equation}\label{Eq:UltraWaveEq}
\Delta_{\bar{t}} u - \Delta_{\bar{x}} u = \mu \vert u\vert^{p_m-1}u\qquad\text{ in }\R^s_s\times\R^{m-s}.
\end{equation}
We are going to study the more general Yamabe type problem in semi-Euclidean space form 
\begin{equation}\label{Eq:MainYamabeTypeMinkowski}
-\square u = \mu\vert u\vert^{p-1}u,\qquad \text{ in }\R_s^m
\end{equation}
with $m\geq 3$, $s\geq 1$, $p>1$ and $\mu\in\R\smallsetminus\{0\}$.  The case $\mu=0$ was already studied in \cite{KoOr}.

 Recall the definition of the  pseudosphere, also known as the de Sitter space when $s=1$,
\[
\S^m_s:=\{z\in\R^{m+1}_s\;:\;\langle z,z\rangle=1\}.
\]
with its canonical induced metric. As the scalar curvature of the pseudosphere is $m(m-1)$, the semi-Riemannian Yamabe equation on this manifold is
\begin{equation}\label{Eq:MainYamabeSphere}
-\square_{g}u + \lambda_m u=\mu \vert u\vert^{p_m-1}u\qquad \text{ on }\ \S_s^m
\end{equation}
where $\lambda_m:=\frac{m(m-2)}{4}>0$ and $\mu>0$. We will focus on the more general equation
\begin{equation}\label{Eq:MainYamabeSphereNorm}
-\square_{g}u + \lambda u=\lambda \vert u\vert^{p-1}u\qquad \text{ on }\ \S^m_s
\end{equation} 
where $\lambda>0$ is constant and $p>1$. When $\lambda=\lambda_m$ and $p=p_m$, this equation is a renormalization of  \eqref{Eq:MainYamabeSphere}, since $u$ is a solution to \eqref{Eq:MainYamabeSphereNorm}  if and only if $\hat{u}=\left(\frac{\lambda}{\mu}\right)^{\frac{1}{p-1}}u$ is a solution to \eqref{Eq:MainYamabeSphere} for any $\mu> 0$.

Here we will reduce equations \eqref{Eq:MainYamabeTypeMinkowski} and \eqref{Eq:MainYamabeSphereNorm} into singular nonlinear ODE's. The reduction uses the theory of \emph{isoparametric functions and hypersurfaces}, which we briefly recall in order to state our main results. We take the definitions and basic results from \cite{Ha}.

Given a semi-Riemannian space form $(M,g)$, a nondegenerate hypersurface $S$ of $M$ is called \emph{isoparametric} if the principal curvatures and their algebraic multiplicities are constant on $S$. The family of the hypersurfaces parallel to $S$ is called an \emph{isoparametric family}. The problem of classifying isoparametric hypersurfaces on Riemannian space forms was settled by Cartan, completely solved by Cartan himself in case of the Euclidean and hyperbolic spaces and partially solved in case of the round sphere \cite{Ca1,Ca2,Ca3}. The case of the sphere remains open but considerable progress has been made toward the classification of isoparametric hypersurfaces on this manifold, see for instance the book \cite{CeRy}, the articles \cite{Ch,Mi1} and the references therein. The classification problem in Lorentzian space forms began with Nomizu \cite{No} and continued with the works of Magid \cite{Ma} on the Minkowski space, Li \cite{Li} on the De Sitter space and Xiao \cite{Xi} on the anti De Sitter space. However, 
the classification is not complete, even in the Minkowski space (see the recent survey \cite{Na} and the references therein). The general problem of classifying isoparametric hypersurfaces in semi-Riemannian space forms is largely open and the main similarities and differences with the Riemannian case were pointed out by Hahn \cite{Ha}. The main difficulty here is that the shape operator is not necessarily diagonalizable, although the principal curvatures (real or complex) as well as their algebraic multiplicity and the minimal polynomial of the shape operator are everywhere constant. In case the shape operator has at most two distinct principal curvatures, the classification is available \cite{AbKoYa}; however, other possibilities may occur even in the semi-Riemannian Euclidean space (see \cite{Ha}).

A way to produce isoparametric hypersurfaces is by means of \emph{isoparametric functions}. A smooth function $\varphi:M\rightarrow\R$ is called \emph{isoparametric} if there exist smooth functions $a,b:\R\rightarrow\R$ such that
\begin{equation}\label{Eq:Isoparametric}
\square_g \varphi=a(\varphi)\quad\text{ and }\quad\langle\grad_g \varphi,\grad_g \varphi\rangle_g= b(\varphi).
\end{equation}
If $t\in\R$ is a regular value of $\varphi$, $S_t:=\varphi^{-1}(t)$ is an isoparametric hypersurface. On the other hand, if $S_0$ is isoparametric, then locally there exist hypersurfaces $S_t$, $t\in(-\varepsilon,\varepsilon)$, parallel to $S_0$, with constant mean curvature and the function defined as $\varphi(z)=t$ for $z\in S_t$ is isoparametric, but it may not be globally defined in $M$. In case of the Riemannian Euclidean space and the Riemannian round sphere, all isoparametric hypersurfaces are obtained as inverse images of regular values of homogeneous polynomials, they are all diffeomorphic, and the inverse image of the critical values are also smooth submanifolds, called \emph{focal submanifolds} (see \cite{CeRy,Mu}).  It is not clear  that these phenomena  happen for the semi-Riemannian space forms, in particular, that every isoparametric hypersurface is a leaf of a foliation given by a global isoparametric function. Even in the simplest case of Lorentzian space forms and Lorentzian isoparametric hypersurfaces, the existence of such a global isoparametric function is not clear for most of the interesting and nontrivial examples \cite{Ma,Xi,Li}. For instance, as it was mentioned in \cite{Ha}, the $B$-scroll hypersurfaces in $\R^m_1$ are not algebraic, hence if there is a global isoparametric function that defines it, it could not be a polynomial, contrasting with the aforementioned Riemannian setting \cite{CeRy}. Even worse, the existence of a global isoparametric function having the $B$-scroll hypersurfaces as level sets is not clear at all. 

In the rest of this work, we will focus on isoparametric hypersurfaces $S$ of a semi-Riemannian manifold $(M,g)$ for which a globally-defined isoparametric function $\varphi:M\rightarrow\R$ exists and is such that $S=f^{-1}(t)$, for some regular value $t\in$Im$\;\varphi$.

We provide some examples in the case of the semi-Euclidean space and the pseudosphere.

\begin{example}\label{Ex:IsoparametricMinkowsky} (Cf. \cite[Proposition 2.3]{Ha}.) 
 Let $A\in$Sym$(\R^m_s)$, $a\in\R^m_s$ and suppose the existence of a real number $\alpha\in\R$ such that $(A-\alpha I)A=0$ and $Aa=\alpha a$. Then $\varphi(z) =\langle Az, z \rangle + 2\langle a, z \rangle$ is isoparametric and
\begin{equation}\label{Eq:IsoparamFunctionMinkowski}
	\langle \emph{\text{grad}} \varphi,\emph{\text{grad}} \varphi\rangle=4\alpha \varphi+4\langle a,a \rangle\quad\text{and}\quad\square_g\varphi=2\emph{\text{tr}}A.
\end{equation} 
\end{example}

The level sets of isoparametric functions given by critical values will be called \emph{focal varieties}. For instance, in the above example, $V:=\varphi^{-1}(0)$ is a focal variety when $\alpha\neq0$. In Appendix \ref{App:Minkowski} we will give several examples of these isoparametric functions and we will describe their corresponding hypersurfaces and focal varieties.

\begin{remark}
When $s\geq1$, the focal varieties may not be even topological manifolds, but algebraic varieties, and they could separate two or more classes of non homeomorphic  isoparametric hypersurfaces generated by the same isoparametric function. For instance, when $s=1$, the function $\varphi(z)=\langle z,z \rangle$ is isoparametric, and its isoparametric hypersurfaces are homothetic either to the de Sitter space $\S_1^m$ or to the anti de Sitter space $\H^{m-1}:=\{z\in\R_s^{m}\;:\; \langle z,z \rangle = -1\}$, which are not homeomorphic, In this example, the focal variety is the light cone $\varphi^{-1}(0)=\mathcal{C}_1^{m-1}:=\{z\in\R_s^{m}\;:\; \langle z,z \rangle = 0\}$, which is an algebraic variety that is not a topological manifold.
\end{remark}

We now turn our attention to the pseudosphere.

\begin{example}\label{Ex:IsoparametricDeSitter}
 In case of the De Sitter space we have the following examples of global isoparametric hypersurfaces (see \cite{Ha}):
	\begin{enumerate}[leftmargin=*]
		\item \emph{Linear examples:} Given $Q\in\S_s^m$, the function $\varphi(z)=\langle z,Q \rangle$ is isoparametric with
		\begin{equation*}
		\langle \emph{\grad}_g \varphi, \emph{\grad}_g \varphi\rangle_g =(1-\varphi^2)\quad\text{ and }\quad \square_g \varphi=-m\varphi.
		\end{equation*} 
		\item \emph{Quadratic examples:} Let $A\in$Sym$(\R_s^{m+1})$, then the function $\varphi:\S_s^m 
 		\rightarrow\R$ given by $\varphi(z):=\langle Az,z \rangle$ is isoparametric if and only if the minimal polynomial of $A$ is
 		 $p_A(t)=t^2+\alpha t +\beta$ for some $\alpha,\beta\in\R$. In this case,
 		 \begin{equation*}
 		 \langle \emph{\grad}_g \varphi, \emph{\grad}_g \varphi\rangle_g =-4p_A(\varphi)\quad\text{ and }\quad \square_g \varphi=2\emph{\text{tr}}A-2(n+2)\varphi.
 		 \end{equation*}
 		 \item \emph{Clifford examples: } If $m=2k-1$, we say that the $n$-tuple $(P_1,\ldots,P_n)$ is a Clifford system of signature $(n,r)$ if $P_i\in$Sym$(\R_s^{m+1})$ and $P_iP_j+P_jP_i=2\eta_{ij}$ for $i,j=1,\ldots,n$, where $\eta_{ij}=-1$ if $i=j\leq r$, $\eta_{ij}=1$ if $r<i=j\leq n$ and $\eta_{ij}=0$ otherwise. In this setting, the function $\varphi:\S_s^m\rightarrow\R$ given by $\varphi(z):=\langle z,z \rangle^2 - 2\sum_{j=1}^{n}\eta_{jj}\langle P_jz,z \rangle^2$, is isoparametric with 
 		 \begin{equation*}
 		 \langle \emph{\grad}_g \varphi, \emph{\grad}_g \varphi\rangle_g =16(1-\varphi^2)\ \text{ and }\  \square_g \varphi=-4(m+3)\varphi+8(k+1).
 		 \end{equation*} 
 		 \end{enumerate}
\end{example}

In Appendix \ref{App:DeSitter} we give a better description of these isoparametric functions and their hypersurfaces, while in Section \ref{Sec:Redcution Method} we will see that all these examples fit into the general setting of semi-Riemannian Cartan-M\"{u}nzner polynomials, which are a generalization of the Riemannian ones  on round spheres \cite{CeRy,Mu}.

\bigskip

For semilinear PDE's defined in semi-Riemannian manifolds for which a non constant globally defined isoparametric function exists, we have the following reduction method.

\begin{proposition}\label{Prop:ReductionODE}
	Let $(M,g)$ be a semi-Riemannian space form, $\varphi:M\rightarrow\R$ be an isoparametric function with $a$ and $b$ satisfying \eqref{Eq:Isoparametric}, and $\Psi:\R\rightarrow\R$ a continuous function. Then $u:=v\circ \varphi$ satisfies equation
	\begin{equation}\label{Eq:GeneralNonlinearEDP}
	-\square_g u = \Psi(u)\qquad\text{ on }\ M
	\end{equation}
	if and only if $v:\text{Im}\,\varphi\rightarrow\R$ is a solution to
	\begin{equation}\label{Eq:GeneralNonlinearODE}
	-b(t)v''-a(t) v' = \Psi(v)\qquad\text{ on }\ \text{Im}\,\varphi
	\end{equation}
\end{proposition}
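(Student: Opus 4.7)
The plan is to prove the equivalence by a direct computation that expresses $\square_g(v\circ\varphi)$ purely in terms of $\varphi$, using nothing more than the chain rule for the gradient, the Leibniz rule for the divergence, and the two isoparametric identities \eqref{Eq:Isoparametric}. The two implications then follow from the surjectivity of $\varphi\colon M\to\text{Im}\,\varphi$.

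Concretely, I would start from $u=v\circ\varphi$ and apply the chain rule for the gradient to write
$$\grad_g u = (v'\circ\varphi)\,\grad_g\varphi.$$
Next I would apply the Leibniz rule $\text{div}_g(fX)=\langle\grad_g f,X\rangle_g + f\,\text{div}_g X$ with $f=v'\circ\varphi$ and $X=\grad_g\varphi$, using the chain rule a second time in the form $\grad_g(v'\circ\varphi)=(v''\circ\varphi)\,\grad_g\varphi$. Combining these gives
$$\text{div}_g\grad_g u = (v''\circ\varphi)\,\langle\grad_g\varphi,\grad_g\varphi\rangle_g + (v'\circ\varphi)\,\text{div}_g\grad_g\varphi.$$
Substituting the two isoparametric identities and rewriting in terms of $\square_g$, one obtains the pointwise formula
$$-\square_g u(z) = -b(\varphi(z))\,v''(\varphi(z)) - a(\varphi(z))\,v'(\varphi(z)) \qquad\text{for every }z\in M.$$
Since the right-hand side depends on $z$ only through $\varphi(z)$, and since the PDE \eqref{Eq:GeneralNonlinearEDP} asserts that the left-hand side equals $\Psi(v(\varphi(z)))$, the equivalence with \eqref{Eq:GeneralNonlinearODE} becomes transparent.

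For the two directions: the forward implication (ODE on $\text{Im}\,\varphi$ implies PDE on $M$) is immediate by evaluating the ODE at $t=\varphi(z)$ and invoking the boxed identity. For the reverse implication I would use that $\varphi\colon M\to\text{Im}\,\varphi$ is surjective: given any $t\in\text{Im}\,\varphi$, choose $z\in M$ with $\varphi(z)=t$ and specialize the PDE at that point to recover the ODE at $t$. There is no analytic obstacle in this argument; the only real care is bookkeeping with the sign convention for $\square_g$, so that the substitution of $\text{div}_g\grad_g\varphi$ in terms of $a(\varphi)$ produces exactly the $-b(t)v''-a(t)v'$ combination appearing on the left of \eqref{Eq:GeneralNonlinearODE}.
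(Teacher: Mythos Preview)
Your argument is correct and is exactly the paper's approach: the paper simply records the identity
\[
\square_g (v\circ \varphi)=(v''\circ \varphi)\,\langle \grad_g \varphi,\grad_g \varphi\rangle_g + (v'\circ \varphi)\,\square_g \varphi
\]
and declares the proposition immediate from it, while you have spelled out its derivation via the chain rule and the Leibniz formula and made the two implications explicit using surjectivity of $\varphi$ onto its image. Your closing remark about the sign bookkeeping for $\square_g$ is apt, since that is indeed the only point requiring care.
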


The proof follows immediately from the identity
\[
\square_g (v\circ \varphi)=(v''\circ \varphi)\langle \grad_g \varphi,\grad_g \varphi\rangle_g + (v'\circ \varphi)\;\square_g \varphi.
\]
 
We next proceed to state our main results, which will be a consequence of the above Proposition and the qualitative behavior of the solutions to the reduced ODE \eqref{Eq:GeneralNonlinearODE}. To this end, first we introduce some concepts and notation. If $u:M\rightarrow\R$ is smooth, the nodal set of $u$ is $\mathcal{N}:=\{u=0\}$, while its critical set is $\mathcal{Z}:=\{\nabla u = 0 \}$. Each of the connected components of $M\smallsetminus\mathcal{N}$ is called a nodal domain. Observe that $u$ can not change sign in a nodal domain, so that it is either positive or negative. 

For fixed positive integers $m\geq 3$, $1\leq s< m$, $0\leq k\leq s$ and $0\leq n\leq m-s$, and for $z=(\bar{t},\bar{x})\in\R_s^m$, define $\ell_{k,n}(z):=-\sum_{i=1}^k t_i^2 + \sum_{j=1}^nx_j^2$. Notice that $\ell_{k,0}(z)=-\vert(t_1,\ldots,t_k)\vert^2$, $\ell_{0,n}(z)=\vert(x_1,\ldots,x_n)\vert^2$ and if $k+n=m$, $\ell_{k,n}(z)=\langle z,z \rangle.$

Our first main theorem gives the existence of global and blowing-up solutions to the Yamabe problem in the semi-Riemannian Euclidean space form \eqref{Eq:MainYamabeTypeMinkowski}.

\begin{theorem}\label{Th:MinkowskiIsop}
Let $p>1$ and $\mu\ne 0$. Then equation \eqref{Eq:MainYamabeTypeMinkowski} admits the following bounded solutions globally defined in $\R_s^m$:
\begin{itemize}
	\item[(Y1)] For $k+n=0,1$ and any $p>1$, the solution $u$ is sign changing with an infinite number of nodal domains.
	\item[(Y2)] For $k+n\geq 2$, with $k=0$ or $n=0$ and $1<p<\frac{(k+n)+2}{(k+n)-2}\leq\infty$, $u$ is sign changing, has an infinite number of nodal domains and $\vert u(z)\vert\rightarrow 0$ as $\ell_{0,n}(z)\rightarrow\infty$ when $k=0$, or as $\ell_{k,0}(z)\rightarrow-\infty$ when $n=0$.
	\item[(Y3)] For $k+n>2$, with  $k=0$ or $n=0$ and $p\geq \frac{(k+n)+2}{(k+n)-2}$, $u$ is positive, with a single critical point at the origin, which is a global maximum, and $\vert u(z)\vert\rightarrow 0$ as $\ell_{0,n}(z)\rightarrow\infty$ when $k=0$, or as $\ell_{k,0}(z)\rightarrow-\infty$ when $n=0$.
\end{itemize}
Moreover, equation \eqref{Eq:MainYamabeTypeMinkowski} admits the following types of blowing-up solutions:
	\begin{itemize}
		\item[(Y4)] For $k+n\geq0$, with $k=0$ or $n=0$ and any $p>1$,  $u$ is positive, with a single critical point at the origin, which is a global minimum, and there exists $R>0$, $a\in\R^m_s\smallsetminus\{0\}$, $\epsilon_1,\ldots,\epsilon_s\in\{-1,1\}$ and $1\leq \omega\leq m-2s$ such that $\vert u(z)\vert\rightarrow\infty$ as either $\ell_{0,n}(z)\rightarrow R$ when $k=0$, as $\ell_{k,0}(z)\rightarrow-R$ if $n=0$, as $\langle a,z \rangle\rightarrow \pm R$ or as $\sum_{i=1}^s\vert t_i + \epsilon_i x_i\vert^2+2\sum_{j=1}^{s+\omega}x_j\rightarrow\pm R$ when $k=n=0$. 
		\item[(Y5)] For $k+n\geq 2$, with $k,n\neq0$ and $1<p<\frac{(k+n)+2}{(k+n)-2}\leq\infty$, $u$ is sign-changing, with an infinite number of nodal domains, $\vert u(z)\vert\rightarrow 0$ as $\ell_{k,n}(z)\rightarrow\infty$ (or as $\ell_{k,n}(z)\rightarrow -\infty$) and there exist $R>0$ such that $\vert u(z)\vert\rightarrow\infty$ as $\ell_{k,n}(z)\rightarrow-R$ (or as $\ell_{k,n}(z)\rightarrow R$).
		\item[(Y6)]  For $k+n>2$, with $k,n\neq0$ and $p\geq \frac{(k+n)+2}{(k+n)-2}$, $u$ is positive with a single critical point at the origin, which is a saddle point, $\vert u(z)\vert\rightarrow 0$ as $\ell_{k,n}(z)\rightarrow\infty$ (or as $\ell_{k,n}(z)\rightarrow -\infty$) and there exist $R>0$ such that $\vert u(z)\vert\rightarrow\infty$ as $\ell_{k,n}(z)\rightarrow-R$ (or as $\ell_{k,n}(z)\rightarrow R$).
	\end{itemize}
\end{theorem}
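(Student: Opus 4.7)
The plan is to apply Proposition~\ref{Prop:ReductionODE} in each of the six regimes, choosing for every case an appropriate isoparametric function $\varphi$ and then carrying out the qualitative analysis of the reduced generalized Emden--Fowler ODE. The level sets and focal variety of the chosen $\varphi$ will produce the geometric loci described in (Y1)--(Y6).

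For (Y1)--(Y3) and for the subcases $k=0$ or $n=0$ of (Y4)--(Y6), take $\varphi=\ell_{k,n}$. A direct coordinate computation shows that $\ell_{k,n}$ is isoparametric with
\[
\langle \grad\,\ell_{k,n},\grad\,\ell_{k,n}\rangle = 4\ell_{k,n}\quad\text{and}\quad \square\,\ell_{k,n}=\text{const},
\]
so Proposition~\ref{Prop:ReductionODE} converts \eqref{Eq:MainYamabeTypeMinkowski} into
\[
4t\,v''(t)+c(k+n)\,v'(t)+\mu|v|^{p-1}v=0\quad\text{on }\text{Im}\,\ell_{k,n},
\]
for a constant $c$. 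After the change of variables $r=2\sqrt{|t|}$ this is a radial-type singular ODE with weight $\tfrac{k+n-1}{r}$ and natural condition $w'(0)=0$; its domain is $[0,\infty)$ when $k=0$, $(-\infty,0]$ when $n=0$, and all of $\R$ when $k,n\geq 1$, in which case the ODE lives on two half-lines meeting at the light-cone singularity $t=0$ (the source of the blow-ups in (Y5)--(Y6)). For the excluded $k=n=0$ subcase of (Y4) the function $\ell_{0,0}$ is identically zero, so instead I would use either a linear $\varphi(z)=\langle a,z\rangle$ with $\langle a,a\rangle\neq 0$ or a degenerate quadratic from Example~\ref{Ex:IsoparametricMinkowsky} with $\alpha=0$, $A^2=0$, $Aa=0$ and $\langle a,a\rangle\neq 0$; explicit diagonal or Jordan choices of $(A,a)$ recover precisely the hyperplane and null-quadric blow-up loci displayed in (Y4).

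The heart of the proof is the qualitative analysis of the reduced ODE, governed by three dichotomies: whether $\tfrac{k+n-1}{r}$ acts as damping ($k+n\geq 2$) or antidamping ($k+n\leq 1$), whether $p$ is subcritical, critical, or supercritical with respect to $\tfrac{(k+n)+2}{(k+n)-2}$, and the sign of $\mu$. For $k+n\leq 1$ a Sturm comparison yields the oscillatory global solutions of (Y1). For $k+n\geq 2$ subcritical, a Pohozaev-type identity combined with Sturm oscillation produces the global sign-changing solutions with infinitely many zeros that decay at infinity, giving (Y2) and (Y5). For $k+n>2$ in the critical or supercritical range, a phase-plane shooting argument yields the positive, decaying, ground-state type solutions of (Y3) and (Y6). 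Blow-up in (Y4)--(Y6) is obtained by prescribing initial data whose phase-plane trajectory exits every bounded region in finite time.

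Pulling back via Proposition~\ref{Prop:ReductionODE} is automatic: level sets of $\varphi$ are the isoparametric hypersurfaces $\{\ell_{k,n}=\text{const}\}$, the focal variety $\{\grad\,\varphi=0\}$ is the origin (or the light-cone when $k,n\geq 1$), and zeros and blow-ups of $w$ transfer directly to the nodal and blow-up sets of $u$. The main obstacle is the singular ODE analysis at $r=0$: one has to set up a well-posed initial value problem where the damping coefficient blows up, verify Pohozaev identities sharply at the critical exponent in order to separate the subcritical and supercritical regimes, and rule out pathological oscillations into the singularity. A secondary obstacle is the $k=n=0$ case in (Y4), where the null isoparametric function must be constructed explicitly so that the exotic loci $\sum|t_i+\epsilon_i x_i|^2+2\sum x_j=\pm R$ emerge as blow-up sets of $u$.
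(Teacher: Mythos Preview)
Your overall architecture matches the paper's exactly: choose the isoparametric functions of Example~\ref{Ex:IsoparametricMinkowsky} (the quadratic $\ell_{k,n}$ when $k+n\geq 1$, the linear $\langle a,z\rangle$ and the nilpotent-quadratic example for $k=n=0$), apply Proposition~\ref{Prop:ReductionODE}, and after the substitution $t=\pm r^2$ land on the Emden--Fowler problem $w''+\tfrac{\theta}{r}w'=-\Lambda|w|^{p-1}w$ with $\theta=k+n-1$. This is precisely the paper's route; the proof of Theorem~\ref{Th:MinkowskiIsop} simply invokes Theorem~\ref{Th:SolutionsMinkowskiODE}, which in turn rests on the ODE classification of Theorem~\ref{Th:EmdenFowlerMinkowski}.

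Where your sketch deviates is in the ODE toolbox, and in one place the roles are inverted. You assign a Pohozaev identity to the subcritical sign-changing regime (Y2)/(Y5) and a phase-plane shooting argument to the supercritical positive regime (Y3)/(Y6). The paper does the opposite: the Pohozaev identity (Lemma~\ref{Lemma:Pohozaev}) is the device for the \emph{supercritical} case $\theta\geq\tfrac{p+3}{p-1}$, where evaluating it at a hypothetical first zero gives a sign contradiction and forces $w>0$; oscillation in the subcritical range comes from a separate oscillation theorem (Theorem~3.3 of \cite{FePe}), not from Pohozaev. Global existence and boundedness for $\Lambda>0$ follow from the monotone energy (Proposition~\ref{Prop:Global existence}), decay from a result of Willet--Wong \cite{WiWo}, and blow-up for $\Lambda<0$ from an Osserman-type integral comparison (Proposition~\ref{Prop:Blow-Up}) rather than a phase-plane exit. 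Your alternative tools for (Y3)/(Y6) and the blow-ups are plausible in principle, but the Pohozaev-for-subcritical step does not produce oscillation and would need to be replaced. A small further correction: for $k+n=1$ one has $\theta=0$, so there is no damping term at all (not ``antidamping''); the non-stability in (Y1) then follows because the energy is conserved.
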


We can extract even more information about the underlying geometry of the solutions. The next result will describe the level sets, including $\mathcal{N}$, and the critical set of the solutions given in Theorem \ref{Th:MinkowskiIsop} in terms of isoparametric hypersurfaces and focal varieties in $\R_s^m$. In what follows, for $a\in\R_s^m\smallsetminus\{0\}$, $\mathcal{L}_a:=\{z\in\R_s^m\;:\;\langle a,z \rangle = 0\}$ will denote a hyperplane through the origin, $\mathcal{P}$ will denote one of the parabolic cylinders described in Appendix \ref{App:Minkowski}, while $\S_\nu^{N-1}(r):=\{ z\in\R_\nu^N \;:\; \langle z,z \rangle = -r \}$, $\H_{\nu-1}^{N-1}(r):=\{ z\in\R_\nu^N \;:\; \langle z,z \rangle = -r \}$ and $\mathcal{C}_\nu^{N-1}:=\{  z\in\R_\nu^N \;:\; \langle z,z \rangle = 0 \}$ will denote the pseudosphere of radius $r$, the pseudo-hyperbolic space of radius $r$ and the null cone in $\R_\nu^N$, respectively, where for $\nu=0$, $\S_0^{N-1}(r)=\S^{N-1}(r)$ is the standard sphere of radius $r$ in $\R_0^N=\R^N$.

\begin{theorem}\label{Th:GeometrySolutions} Let $u$ be one of the solutions (Y1)-(Y6) of the previous Theorem and let $S$ and $V$ be the connected components of a regular level set, including the nodal set $\mathcal{N}$, and the critical set $\mathcal{Z}$ of $u$, respectively. Then, up to isometries in $\R_s^m$:
\begin{itemize}[leftmargin=*]
\item For (Y1) and (Y4), $S$ is homothetic to either $\mathcal{L}_a^{m-1}$, $\R_s^{m-1}$, $\R^{m-1}_{s-1}$, $\mathcal{P}\times\R^{m-2s-\omega}$, $\R_s^{m-n}\times\S^{n-1}(1)$ or $\S^{k-1}(1)\times\R^{m-k}_{s-k}$, and only one option is possible, while $V$ is homothetic to either one of these hypersurfaces or to either one of the focal varieties $\R^{m-k}_{s-k}$ or $\R^{m-n}_{s}$. Moreover, the blow-up for (Y4) occur on a hypersurface homothetic to $S$.
\item For (Y2) and (Y3), $S$ is homothetic to either $\R^{m-1}_s$, $\R_{s-1}^{m-1}$, $\R_s^{m-n}\times\S^{n-1}(1)$ or $\S^{k-1}(1)\times\R^{m-k}_{s-k}$ and only one option, while $V$ is homothetic  to either one of these hypersurfaces or to  one of the focal varieties $\R^{m-k}_{s-k}$ or $\R^{m-n}_{s}$.
\item For (Y5) and (Y6), $S$ and $V$ are homothetic to either   $\S_k^{(k+n)-1}(1)\times \R^{m-(k+n)}_{s-k}$ or $\H_{k-1}^{(k+n)-1}(1)\times\R_{s-k}^{m-(k+n)}$, where both options occur, and $V$ could also be homothetic to the algebraic variety $\mathcal{C}_k^{(k+n)-1}\times\R_{s-k}^{m-(k+n)}$. Moreover, the blow-up occur at a hypersurface homothetic to $S$.
\end{itemize} 
\end{theorem}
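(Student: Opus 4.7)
The plan is to leverage the fact that each solution $u$ in Theorem \ref{Th:MinkowskiIsop} has the form $u=v\circ\varphi$, where $\varphi$ is one of the semi-Riemannian isoparametric functions described in Example \ref{Ex:IsoparametricMinkowsky} and Appendix \ref{App:Minkowski}, and $v$ solves the reduced ODE. The chain rule gives $\grad_g u=(v'\circ\varphi)\,\grad_g\varphi$, so every level set of $u$ is a union of level sets of $\varphi$, and the critical set of $u$ decomposes as
\begin{equation*}
\mathcal{Z}(u)=\varphi^{-1}\bigl(\{t:v'(t)=0\}\bigr)\,\cup\,\{z\in\R_s^m:\grad_g\varphi(z)=0\},
\end{equation*}
where the second piece is precisely the focal variety of $\varphi$. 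The geometric claims then reduce to three tasks: (i) identify, for each type (Y1)--(Y6), the normal form of $\varphi$; (ii) classify the regular level sets and focal variety of $\varphi$ up to isometries of $\R_s^m$; and (iii) translate the qualitative behaviour of $v$ given in Theorem \ref{Th:MinkowskiIsop} (sign, critical points, blow-up locus) into the geometric conclusions for $S$ and $V$.

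For task (ii), I would run a case analysis on $(k,n)$ using the normal forms catalogued in Appendix \ref{App:Minkowski}. When $k+n\leq 1$, the function $\varphi$ is affine, and a regular level set is a hyperplane, which after a rigid motion of $\R_s^m$ is homothetic to $\mathcal{L}_a^{m-1}$, $\R_s^{m-1}$, or $\R_{s-1}^{m-1}$ according to the causal character (spacelike, timelike, or null) of the defining vector. The genuinely null case, together with a rank-one quadratic piece coming from a null eigenvector of $A$, produces the parabolic cylinder $\mathcal{P}\times\R^{m-2s-\omega}$ allowed by Example \ref{Ex:IsoparametricMinkowsky}. When $k+n\geq 2$ and either $k=0$ or $n=0$, the form $\ell_{k,n}$ is definite on the selected coordinates and, after splitting off the flat factor, the level sets are products of a standard sphere with a semi-Euclidean space, giving $\R_s^{m-n}\times\S^{n-1}(1)$ or $\S^{k-1}(1)\times\R^{m-k}_{s-k}$. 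When both $k,n\neq 0$, the form $\ell_{k,n}$ is indefinite and its level sets split into two non-homeomorphic branches separated by the null cone: $\S^{(k+n)-1}_k(1)\times\R^{m-(k+n)}_{s-k}$ for positive values and $\H^{(k+n)-1}_{k-1}(1)\times\R^{m-(k+n)}_{s-k}$ for negative values, with $\mathcal{C}^{(k+n)-1}_k\times\R^{m-(k+n)}_{s-k}$ as focal variety.

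For task (iii), I would match each type to its column above. For (Y2), (Y3), (Y5), (Y6) the isoparametric function is quadratic and every regular level set of $u$ inherits the corresponding homothety class; the critical set $\mathcal{Z}(u)$ contains both the focal variety of $\varphi$ and all hypersurfaces $\varphi^{-1}(t)$ with $v'(t)=0$, and the latter are again in the same homothety class as $S$, which explains why the lists for $S$ and $V$ coincide except for the additional flat focal component. For (Y1) and (Y4) the linear and null-parabolic cases join in, enlarging the list by $\mathcal{L}_a^{m-1}$ and $\mathcal{P}\times\R^{m-2s-\omega}$. The blow-up part of (Y4), (Y5), (Y6) is a direct consequence of the blow-up of $v$ at a finite parameter value $\pm R$: the associated blow-up locus of $u$ is $\varphi^{-1}(\pm R)$, a hypersurface of the same homothety class as $S$. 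Finally, in (Y5) and (Y6) the dichotomy $\ell_{k,n}\to\pm\infty$ corresponds one-to-one to the two branches $\S^{(k+n)-1}_k$ versus $\H^{(k+n)-1}_{k-1}$; which of them carries the decay to zero and which carries the blow-up is determined by the initial sign of $v$ together with the sign of $\langle\grad_g\varphi,\grad_g\varphi\rangle_g$ on each branch.

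The main obstacle is task (i), the systematic matching of the defining data $(A,a,\alpha)$ to each case (Y1)--(Y6). The delicate point is the null case of (Y4): one has to choose a basis of $\R_s^m$ adapted to a null eigenvector of $A$ and verify that the admissible range $1\leq\omega\leq m-2s$ exhausts the possibilities consistent with $(A-\alpha I)A=0$ and $Aa=\alpha a$ when $\alpha=0$, without producing further homothety classes of level sets. Once the canonical forms of $\varphi$ are fixed for each (Y1)--(Y6), the homothetic descriptions of $S$ and $V$ together with the blow-up statements are immediate consequences of the classification in (ii) and the ODE behaviour from Theorem \ref{Th:MinkowskiIsop}.
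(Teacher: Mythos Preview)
Your proposal is correct and follows essentially the same approach as the paper. The paper's proof is a two-line appeal to Theorem \ref{Th:SolutionsMinkowskiODE} and Appendix \ref{App:Minkowski}, invoking exactly the two identities you start from, namely $u^{-1}(c)=\varphi^{-1}(v^{-1}(c))$ and $\nabla u=(v'\circ\varphi)\nabla\varphi$; your write-up is simply a careful expansion of that terse argument, with the case analysis on $(k,n)$ made explicit rather than delegated to the appendix.
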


When considering the critical exponent $p_m=\frac{m+2}{m-2}$, we observe that if either $k=0$ or $n=0$, then necessarily $p<\frac{(k+n)+2}{(k+n)-2}$. This leads us to the following immediate corollary.

\begin{corollary}
The Yamabe equation \eqref{Eq:MainLorYam} in $\R^{m}_s$ and $\mu\neq0$, admits all the solutions of Theorem \ref{Th:MinkowskiIsop} but (Y3).
\end{corollary}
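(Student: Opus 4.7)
The plan is to observe that on the flat semi-Euclidean space $\R^m_s$ the scalar curvature vanishes, so \eqref{Eq:MainLorYam} coincides with \eqref{Eq:MainYamabeTypeMinkowski} at the critical exponent $p = p_m = \frac{m+2}{m-2}$. The corollary then reduces to checking which of the six cases of Theorem \ref{Th:MinkowskiIsop} are consistent with this choice of exponent for at least one admissible pair $(k,n)$ with $0\leq k\leq s$ and $0\leq n\leq m-s$.

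The key tool is the strict decrease of $f(x) = \frac{x+2}{x-2}$ on $(2,\infty)$. The admissible pairs satisfy $k+n \leq s + (m-s) = m$, with equality forcing $k = s$ and $n = m-s$, so $f(k+n) \geq f(m) = p_m$ with equality only in that extremal case. In particular, if $k = 0$ or $n = 0$, the standing hypothesis $1 \leq s < m$ forces $k+n \leq m-1 < m$, giving the strict inequality $f(k+n) > p_m$.

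I would then verify each case in turn. Cases (Y1) and (Y4) impose no upper bound on $p$ and are realized by any admissible $(k,n)$ with $k+n \leq 1$, respectively $kn = 0$. Case (Y2), requiring $k+n \geq 2$, $kn = 0$, and $p < f(k+n)$, is consistent with $p = p_m$ thanks to the strict inequality just derived. Case (Y5) (respectively (Y6)), requiring $k,n \neq 0$ together with $p < f(k+n)$ (respectively $p \geq f(k+n)$), is realized by picking any $k, n \geq 1$ with $k+n < m$ (respectively $k = s$ and $n = m-s$, where the extremal equality $f(k+n) = p_m$ occurs). Only case (Y3), which demands $kn = 0$ together with $p \geq f(k+n)$, becomes impossible at $p = p_m$: the first condition already forces $f(k+n) > p_m$ strictly, contradicting the second.

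There is no substantive obstacle in this argument; it is an immediate inspection of the inequalities in Theorem \ref{Th:MinkowskiIsop} combined with the monotonicity of $f$ and the assumption $1 \leq s < m$. The only subtlety worth stressing is that dropping the strict inequality $s < m$ would reinstate (Y3) via $k = s = m$, $n = 0$, so the corollary is genuinely a feature of nontrivial signature.
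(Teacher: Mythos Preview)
Your proposal is correct and follows the same approach as the paper: the paper's entire justification is the one-line observation preceding the corollary, namely that when $k=0$ or $n=0$ one necessarily has $p_m<\frac{(k+n)+2}{(k+n)-2}$, which is exactly the monotonicity argument you spell out. Your version is more thorough in that you also verify explicitly that each of the remaining cases (Y1), (Y2), (Y4), (Y5), (Y6) is actually realized by some admissible $(k,n)$ at $p=p_m$, whereas the paper leaves this implicit.
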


We make the following remarks

\begin{enumerate}[leftmargin=*]
	\item The solutions (Y1) and (Y4) with hyperplanes as level sets are not new and these solutions arise, for instance, when considering $u$ depending only on one variable for the Lorentizan Yamabe problem. For example, Kong and Liu \cite{KoLi} and Ginoux \cite{Gi} studied this reduction taking $u(\bar{t},\bar{x})=u(t_1)$ in case $s=1$, i.e., fixing the space variable, giving a complete description of the solutions. Our result generalize theirs for every $s\geq1$, for their result can be recovered from ours by taking $a=(1,0,0,\ldots,0)$ and, in this case, the blow up occur as $t_1\rightarrow\pm R$.
	\item For the critical exponent $p_m$, signature $s=1$ and taking $k+n=m$, the solutions (Y5) and (Y6) are the radial solutions to the (focusing or defocusing) critical energy wave equation \eqref{Eq:CritEnWaveEq} (see \cite{Ke} or \cite{KeMe}). For any signature $s\geq 1$, the solutions (Y5) and (Y6) with $k+n=m$, are radial in the following sense: fix an initial time variable $\bar{t}_0$, then the function $u_{\bar{t}_0}(\bar{x})=u(\bar{t}_0,\bar{x})$ is radial in the space variable, for $u$ it can be factorized through the isoparametric function $\varphi(\bar{t},\bar{x})=-\vert t\vert^2 + \vert x\vert^2$, where $\vert\cdot\vert$ denotes the norm in the usual Euclidean space. The same is true fixing now the space variable instead of the time one. 
	\item The solutions having parabolic cylinders as level subsets are new and they are not radial when fixing the time or the space variables, as one can see in the example constructed in Appendix \ref{App:Minkowski}. The global solutions are not stable for $\vert u\vert\nrightarrow 0$ asymptotically and cannot be a solution to the critical energy wave equation as defined in \cite{Ke}. However, as this example follows from the existence of parabolic cylinders as isoparametric hypersurfaces, it is interesting to ask whether other non trivial examples of hypersurfaces, distinct from the ones given here (for instance, the $B$-scroll hypersurfaces), could be used to obtain new kinds of well defined solutions to the critical energy wave equation. 
\end{enumerate}

The last remark lead us to the following open questions, which may be interesting for both the geometric and the analytical points of view:

\begin{question}
Given an isoparametric hypersurface $S$ in the semi-Riemannian Euclidean space, does there exists a globally defined isoparametric function $\varphi:\R_s^m\rightarrow\R$ such that $S=\varphi^{-1}(c)$ for some regular value $c$? If not, under which conditions is this possible? 
\end{question}

Next we tackle the Yamabe problem on the pseudosphere. The following result is a generalization of the main theorems in \cite{FePaPe,FePe}. To make a precise statement, we point out that principal curvatures of the isoparametric hypersurfaces in Example \ref{Ex:IsoparametricDeSitter} have, at most, two multiplicities $m_1$ and $m_2$ (see Appendix \ref{App:DeSitter}). Define $n_i:= (m-1)-m_i$, $i=1,2$ and let $\kappa:=\min\{n_1,n_2\}$.

\begin{theorem}\label{Th:NodalDeSitterIsop} Let $k\geq 2$ be a positive integer and  $S$ be an isoparametric hypersurface given by the functions in Example \ref{Ex:IsoparametricDeSitter}. If  
$
p<\frac{(m-\kappa)+2}{(m-\kappa)-2},
$
then there exist isoparametric hypersurfaces $S_+$, $S_0$ and $S_{-}$ (possibly $S_+=S_{-}$), and a blowing-up, sign changing solution $u_k$ to Equation \eqref{Eq:MainYamabeSphereNorm} with exactly $k$ nodal domains such that its nodal set has exactly $k-1$ connected components diffeomorphic to $S_0$, $u(z)\rightarrow\infty$ as $\text{dist}(z,S_{+})\rightarrow 0$ and $(-1)^{k-1}u(z)\rightarrow\infty$ as $\text{dist}(z,S_{-})\rightarrow 0$. Moreover, $S$ and every connected component of a nonempty regular level set are homothetic  to $S_0$, $S_{-}$ or $S_{+}$, and every connected component of the critical set is homothetic to one of these hypersurfaces or to one of the connected components of the focal submanifolds.
\end{theorem}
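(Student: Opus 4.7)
The plan is to combine the reduction method of Proposition \ref{Prop:ReductionODE} with a careful analysis of the resulting singular ODE, together with the geometric dictionary relating level sets of $u = v\circ\varphi$ to isoparametric hypersurfaces. First I would fix, for the hypersurface $S$ in the statement, an associated global isoparametric function $\varphi\colon\S^m_s\to\R$ from Example \ref{Ex:IsoparametricDeSitter} (with $S$ appearing as a regular level set), noting that $\mathrm{Im}\,\varphi$ is a compact interval whose endpoints are the critical values of $\varphi$ and correspond to the two focal varieties. Inserting $u = v\circ\varphi$ into \eqref{Eq:MainYamabeSphereNorm} and applying Proposition \ref{Prop:ReductionODE}, the PDE becomes an ODE
\[
-b(t)\,v'' - a(t)\,v' + \lambda v = \lambda|v|^{p-1}v
\]
on $\mathrm{Im}\,\varphi$, with $a,b$ read off from \eqref{Eq:Isoparametric} and $b$ vanishing at the endpoints.

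Next I would pass to arclength-type coordinates $r = r(t)$, chosen so that $dr/dt = 1/\sqrt{|b(t)|}$ on the subinterval where $b$ has definite sign, transforming the equation into the generalized Emden--Fowler normal form
\[
w'' + q(r)\,w' + \lambda w = \lambda|w|^{p-1}w,\qquad r\in[0,L],
\]
with $q(r)$ blowing up like $n_i/r$ (resp.\ $n_i/(L-r)$) near the two endpoints and with natural Neumann conditions $w'(0) = w'(L) = 0$ enforced by smoothness of $u$ across the focal varieties. The integers $n_1,n_2 = (m-1)-m_i$ are precisely the codimensions of the focal varieties in $\S^m_s$ and determine the effective dimension $m-\kappa$ in the Sobolev exponent $\frac{(m-\kappa)+2}{(m-\kappa)-2}$; the subcritical assumption is exactly what allows the ODE to be treated as a subcritical singular boundary value problem.

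The analytical core is then to prove, for each $k\geq 2$, the existence of a solution $w_k$ of the above ODE on the open interval $(0,L)$ with exactly $k-1$ interior zeros that blows up as $r\to 0^+$ and/or $r\to L^-$ with sign $(-1)^{k-1}$ between the two endpoints. I would do this by a shooting argument: starting from a blowing-up profile at $r=0^+$ (obtained by classical singular ODE techniques as in the Emden--Fowler literature), parametrize the blow-up rate by a real parameter $\alpha$, extend the solution forward, and show using continuous dependence that the number $N(\alpha)$ of sign changes in $(0,L)$ ranges through all nonnegative integers as $\alpha$ varies. The subcritical condition prevents ``concentration'' of the solution and, via an energy/Pohozaev-type identity for the singular ODE, forces unbounded oscillation in one regime and monotonicity in another, so intermediate-value and continuity arguments produce, for every $k$, a value of $\alpha$ giving exactly $k-1$ zeros together with a blow-up or a regular termination at $r = L$. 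A symmetric shooting from both endpoints is used to match the two blow-up behaviors when $S_+\neq S_-$. This is the analogue in the semi-Riemannian setting of the arguments of \cite{FePaPe,FePe}, and I expect it to be the main technical hurdle, since both the singularity of $q$ at the endpoints and the loss of positivity of the metric prevent a direct variational treatment.

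Once $w_k$ is produced, I would set $u_k := w_k\circ\varphi$ (after undoing the change of variables). Since regular values of $w_k$ pull back via $\varphi$ to isoparametric hypersurfaces, every connected component of a nonempty regular level set of $u_k$ — in particular every component of the nodal set $\mathcal{N}(u_k)$ — is homothetic to one of the three isoparametric hypersurfaces $S_0$, $S_+$ or $S_-$ corresponding to the regular values used; by construction the nodal set has exactly $k-1$ connected components, each homothetic to the single $S_0$ coming from the interior zeros of $w_k$. The critical set $\mathcal{Z}(u_k) = \varphi^{-1}(\{w_k' = 0\}) \cup \varphi^{-1}(\text{critical values of }\varphi)$ is the union of regular-level isoparametric hypersurfaces (at interior extrema of $w_k$) together with the focal varieties of $\varphi$, yielding the last assertion. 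Finally, the blow-up of $w_k$ as $r\to 0^+$ and $r\to L^-$ transfers to $u_k(z)\to\infty$ as $\mathrm{dist}(z,S_+)\to 0$ and $(-1)^{k-1}u_k(z)\to\infty$ as $\mathrm{dist}(z,S_-)\to 0$, as required.
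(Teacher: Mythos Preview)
Your proposal rests on a structural misconception: you assert that $\mathrm{Im}\,\varphi$ is a compact interval whose endpoints are the critical values $\pm1$, and you then arrange for the solution to blow up at these endpoints (the focal varieties). But for $s\geq 1$ the pseudosphere $\S^m_s$ is non-compact and the isoparametric functions of Example~\ref{Ex:IsoparametricDeSitter} have unbounded image --- typically all of $\R$, or a half-line $[-1,\infty)$, $(-\infty,1]$, $[1,\infty)$ or $(-\infty,-1]$ (see Section~\ref{Section:ReductionDeSitter} and Appendix~\ref{App:DeSitter}). The hypersurfaces $S_\pm$ in the statement are \emph{regular} isoparametric level sets, not focal varieties; the solution is in fact required to be smooth across the focal varieties (they appear in the critical set of $u_k$, not in the blow-up set), so a profile blowing up at $r\to 0^+$ or $r\to L^-$ in your normalization would not produce a function $u_k$ of the claimed type.

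The paper's construction is organized around this non-compactness. One splits the real line at the singular values $t=\pm1$ and treats the three pieces separately. On the compact piece $[-1,1]$ the change of variables $t=\cos r$ yields the two-point singular Neumann problem \eqref{Eq:Compact} on $[0,\pi]$; here the subcritical hypothesis $p<\frac{(m-\kappa)+2}{(m-\kappa)-2}$ allows one to invoke the prior results of \cite{FePe,FePaPe,JuPe,BeJuPe} (collected as Theorem~\ref{Th: k zeroes}) to obtain a smooth solution $w_k$ with $w_k'(0)=w_k'(\pi)=0$, exactly $k$ zeros, and endpoint values $|w_k(0)|,|w_k(\pi)|>1$. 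On each unbounded piece $[1,\infty)$ and $(-\infty,-1]$ the substitution $t=\pm\cosh r$ turns the equation into \eqref{Eq:PositiveODE}, and the key point --- proved by an Osserman-type comparison argument (Proposition~\ref{Prop:Blow-Up}) rather than by shooting from a prescribed blow-up rate --- is that any solution with initial value $>1$ must blow up at some finite $R'_\pm>0$. Gluing the three pieces at $t=\pm1$ via the local existence and uniqueness result of Appendix~\ref{App:ExistenceUniquenessSingular} gives a solution of the ODE on an interval $(-R_-,R_+)$ with $R_\pm>1$; the blow-up hypersurfaces are then $S_\pm=\varphi^{-1}(\pm R_\pm)$. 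In short, the nodal structure is borrowed from the compact (Riemannian) piece and the blow-up is forced on the non-compact remainder; your shooting-from-a-singular-profile scheme aims at the wrong endpoints and would need to be replaced by this decomposition.
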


We point out that in the Riemannian setting ($s=0$), the numbers $n_1$ and $n_2$ are the dimensions of the focal varieties. However, this is no longer true for positive signatures. In Appendix \ref{App:DeSitter} we will describe the properties of these isoparametric hypersurfaces and their focal varieties in order to give concrete examples of this phenomenon and to obtain a better image of the solutions obtained in the theorem.

In case of the critical exponent $p=p_m$, the only way to attain the subcriticality in order to produce a solution to the Yamabe problem on the pseudosphere \eqref{Eq:MainYamabeSphere} is when $\kappa>0$. As this is not provided in the linear examples, we have the following immediate consequence of Theorem \ref{Th:NodalDeSitterIsop}.

\begin{corollary}
The Yamabe equation \eqref{Eq:MainYamabeSphere} admit sign-changing blowing-up solutions with a prescribed number of nodal domains, and having as level sets the isoparametric hypersurfaces described by the quadratic and Clifford isoparametric functions in Example \ref{Ex:IsoparametricDeSitter}.
\end{corollary}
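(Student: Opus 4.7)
The plan is to apply Theorem \ref{Th:NodalDeSitterIsop} directly with the critical Sobolev exponent $p=p_m=\frac{m+2}{m-2}$ and the conformal scalar-curvature constant $\lambda=\lambda_m$, and then transfer the resulting solutions of \eqref{Eq:MainYamabeSphereNorm} to solutions of the Yamabe equation \eqref{Eq:MainYamabeSphere} via the explicit rescaling $\hat u=(\lambda/\mu)^{1/(p-1)}u$ already noted in the introduction.

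First I would reduce the subcriticality condition of Theorem \ref{Th:NodalDeSitterIsop}, namely $p<\frac{(m-\kappa)+2}{(m-\kappa)-2}$, to a simple statement about $\kappa=\min\{n_1,n_2\}$. Since the map $t\mapsto\frac{t+2}{t-2}$ is strictly decreasing on $(2,\infty)$ and $p_m=\frac{m+2}{m-2}$, the inequality with $p=p_m$ is equivalent to $m-\kappa<m$, i.e. to $\kappa\geq 1$. Thus it suffices to exhibit $\kappa\geq 1$ for the two families of isoparametric functions considered in the statement.

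Next I would use Appendix \ref{App:DeSitter} (which, per the text preceding the corollary, describes the principal curvatures and their multiplicities $m_1,m_2$ of the isoparametric hypersurfaces in Example \ref{Ex:IsoparametricDeSitter}) to verify that for every quadratic and every Clifford example both multiplicities satisfy $m_i\leq m-2$, so that $n_i=(m-1)-m_i\geq 1$ and hence $\kappa\geq 1$. For contrast I would observe that in the linear examples the level sets are umbilical, carrying a single principal curvature of full multiplicity $m-1$, which gives $\kappa=0$ and explains why Theorem \ref{Th:NodalDeSitterIsop} does not apply to them at the critical exponent.

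Once these two checks are in place, Theorem \ref{Th:NodalDeSitterIsop} furnishes, for every integer $k\geq 2$, a blowing-up sign-changing solution $u_k$ of \eqref{Eq:MainYamabeSphereNorm} with $\lambda=\lambda_m$, $p=p_m$, with exactly $k$ nodal domains and with its nodal set and regular level sets described by isoparametric hypersurfaces coming from the same function. Multiplying $u_k$ by $(\lambda_m/\mu)^{(m-2)/4}$ yields the desired solution of \eqref{Eq:MainYamabeSphere}, and since the rescaling is a positive constant it preserves the nodal set, the blow-up locus, and the level-set structure, so the geometric conclusion of the corollary follows verbatim. There is no genuine obstacle here; the only nontrivial point is the multiplicity computation $\kappa\geq 1$ for the quadratic and Clifford families, which is however a standard bookkeeping carried out in Appendix \ref{App:DeSitter}.
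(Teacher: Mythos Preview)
Your proposal is correct and follows essentially the same route as the paper: the paper states this corollary as an ``immediate consequence'' of Theorem \ref{Th:NodalDeSitterIsop}, observing just before the statement that the subcriticality condition at $p=p_m$ is equivalent to $\kappa>0$, which fails for the linear examples but holds for the quadratic and Clifford ones (via Appendix \ref{App:DeSitter}), and then invokes the rescaling $\hat u=(\lambda/\mu)^{1/(p-1)}u$ from the introduction. Your write-up makes the monotonicity argument for the subcriticality reduction and the rescaling step slightly more explicit, but there is no substantive difference.
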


\smallskip

The proof of main Theorems \ref{Th:MinkowskiIsop} and \ref{Th:NodalDeSitterIsop} relies on the reduction method given by Proposition \ref{Prop:ReductionODE}. We will show that both problems \eqref{Eq:MainYamabeTypeMinkowski} and \eqref{Eq:MainYamabeSphereNorm} can be reduced to the study the qualitative properties of the solutions to a generalized Emden-Fowler equation of the form
\begin{equation}\label{Eq:GeneralEmdenFowler}
w''+q(t)w'=f(w),\qquad\text{ in }\ I
\end{equation}
with initial condition $w'(0)=0$, where $q$ and $f$ are suitable continuous functions whose properties will be defined later on in Section \ref{Sec:GeneralEmdenFowler}, and $I$ is either $[0,\infty)$ or $[0,\pi]$. The reduction will be performed, separately, for equations \eqref{Eq:MainYamabeTypeMinkowski} and \eqref{Eq:MainYamabeSphereNorm} in Section \ref{Sec:Redcution Method}, while in Section \ref{Sec:GeneralEmdenFowler} we will develop the theory of global existence and blow-up of solutions to equation \eqref{Eq:GeneralEmdenFowler} and study their qualitative behavior. In particular, in \ref{Sec:Proof 1} we present the proof of  Theorems \ref{Th:MinkowskiIsop} and \ref{Th:GeometrySolutions} as a consequence of a more  general result for the reduced equation, stated in Theorem \ref{Th:SolutionsMinkowskiODE} below. Finally, in Section \ref{Sec:Proof 2} we state and prove a general result for the reduced equation obtained from the isoparametric reduction on the pseudosphere, Theorem \ref{Th:SolutionsDeSitterEntireODE} below, from which the proof of Theorem \ref{Th:NodalDeSitterIsop} follows immediately. We add two Appendixes about the geometry of the level sets of some isoparametric functions on the semi-Euclidean space and on the pseudosphere, in order to get a better understanding of the solutions to the main problems, and we also present an Appendix about the local existence and uniqueness to some singular differential equations of Emden-Fowler type.


\section{The reduction method}\label{Sec:Redcution Method}

In this section we will reduce equations \eqref{Eq:MainYamabeTypeMinkowski} and \eqref{Eq:MainYamabeSphereNorm} into \eqref{Eq:GeneralNonlinearODE} by means of the isoparametric functions in Examples \ref{Ex:IsoparametricMinkowsky} and \ref{Ex:IsoparametricDeSitter} respectively, and then we will fit \eqref{Eq:GeneralNonlinearODE} into a generalized Emden-Fowler equation having the form \eqref{Eq:GeneralEmdenFowler} in each case.

\subsection{The case of the Minkowski space form.}\label{Section:ReductionMinkowski}

Let $\varphi:\R^m_s\rightarrow\R$ be the isoparametric function  given in Example \ref{Ex:IsoparametricMinkowsky}. Hence, $\varphi$ satisfies \eqref{Eq:IsoparamFunctionMinkowski} and by Proposition \ref{Prop:ReductionODE}, the Yamabe type problem \eqref{Eq:MainYamabeTypeMinkowski} reduces to
\begin{equation}\label{Eq:MainReducedYamabeMinkowski}
(\gamma t+\delta)v'' + \beta v' =-\mu \vert v\vert^{p-1}v \quad\text{ in }\text{Im}\varphi,
\end{equation}
where 
\[
\gamma:= 4\alpha, \delta:= 4\langle a, a \rangle \text{ and }\beta:= 2\text{tr}A
\]
Notice that $\gamma,\delta,\beta\in\R$ could take any value, for the semi-Riemannian metric is not positive definite, 

As it is stated in Appendix \ref{App:Minkowski}, this kind of isoparametric functions may have different images, yielding very different ODE's. For this reason, we will only consider some of the main examples here with their corresponding reduced equation.
\subsubsection{Case $A=0$.} The isoparametric function is just $\varphi(z) = 2\langle a, z \rangle$ and is not constant if and only if $a\neq 0$; the isoparametric hypersurfaces are parallel hyperplanes. In this case  Im$\varphi=\R$ and as $\beta=0=\gamma$, so that the equation \eqref{Eq:MainReducedYamabeMinkowski} with $\delta\neq0$ is simply
\begin{equation}\label{Eq:MainReduction A=0}
 v'' =-\frac{\mu}{\delta} \vert v\vert^{p-1}v \quad\text{ in }\R.
\end{equation}

When $a=(-1,0,\ldots,0)$, the function $\varphi$ is just the projection onto the first factor. This is the kind of simplification used in \cite{Gi,KoLi} to obtain solutions to the Yamabe problem not depending on the space variable.

In order to analyze this equation, we are led to study it separately in $(-\infty,0]$ and in $[0,\infty)$. Observe that if $t=-s$ and we take $y(s)=v(t)$ for $s\in[0,\infty)$, then solving \eqref{Eq:MainReduction A=0} in $(-\infty,0]$ is equivalent to solving
\[
y''=- \frac{\mu}{\delta} \vert y\vert^{p-1}y \quad\text{ in }[0,\infty).
\]

Hence, we are led to just study the Emden-Fowler equation
\[
w''= \Lambda \vert w\vert^{p-1}w \quad\text{ in }[0,\infty).
\]
with $\Lambda\in\R\smallsetminus\{0\}$. We postpone this until the next section.


\subsubsection{Case $A\neq 0$ and $\gamma=0$.} Now the only eigenvalue is $\alpha=0$, $a$ is a eigenvector satisfying $\langle a,a \rangle \ne 0$ and $A$ es nilpotent, with $A^2=0$. We will suppose also that $\beta=$tr$A=0$ and that Im$\varphi=\R$, for the known examples satisfy this property. An explicit example, generalizing the one given in \cite{Ha} to arbitrary signature, is given in Appendix \ref{App:Minkowski}. In this case the equation reduces to

\begin{equation}\label{Eq:Main A neq 0 gamma=0}
v''  =-\frac{\mu}{\delta} \vert v\vert^{p-1}v \quad\text{ in }\R,
\end{equation}

which has the same form as \eqref{Eq:MainReduction A=0} and will also be study in the next section.

\subsubsection{Case $A\neq0$ and $\gamma\neq 0$.} The conditions on $A$ imply that the minimal polynomial of $A$ is $t, t-\alpha$ or $(t-\alpha)t$, where we obtain that $A$ is diagonalizable with proper values $\alpha\neq 0$ and $0$, and proper vector $a$ associated to $\alpha$. In what follows, we will take $a=0$, for the linear term in the expression of $\varphi$ just gives a translation of its isoparametric hypersurfaces. Hence $\delta=0$ and the equation \eqref{Eq:MainReducedYamabeMinkowski} can be reduced to
\begin{equation}\label{Eq:MainRedution A neq 0 gamma neg 0}
 tv'' + \frac{(k+n)}{2} v' =-\frac{\mu}{4\alpha} \vert v\vert^{p-1}v \quad\text{ in }\text{Im}\varphi,
\end{equation}
where $0\leq k\leq s$ and $0\leq n\leq m-s$ are integers satisfying $k+n>0$, and Im$\,\varphi$ could be either $(-\infty,0]$, $[0,\infty)$ or $\R$, according to the values of $k+n$ and $\alpha$ (see Appendix \ref{App:Minkowski}).

Observe that the change of variables $t=-s$, $y(s)=v(t)$, transforms the equation defined in $(-\infty,0]$ to a one defined in $[0,\infty)$ having the form
\[
sy'' + \frac{(k+n)}{2} y' =\frac{\mu}{4\alpha} \vert y\vert^{p-1}y \quad\text{ in }[0,\infty),
\]
so we are led to study the more general equation 
\begin{equation}\label{Eq:Main Aneq0 gamma neq 0 general}
tv''+\eta v' = \vartheta \vert v\vert^{p-1}v \qquad \text{ in }[0,\infty)
\end{equation}
where $\eta>\frac{1}{2}$ and $\vartheta\in\R\smallsetminus\{0\}$.

Yet we can consider another change of variables to get rid of the term in front of the higher derivative. For $t\geq 0$, consider $t=r^2$ and $w(r):=v(t)$, hence $v$ solves \eqref{Eq:Main Aneq0 gamma neq 0 general} if and only if $w$ solves
\begin{equation}\label{Eq:Main Aneq0 gamma neq 0 Classical}
w''+\frac{\theta}{r} w' = -\Lambda \vert w\vert^{p-1}w \qquad \text{ in }[0,\infty)
\end{equation}
where now $\theta=2\eta-1>0$ and $\Lambda=-4\vartheta\in\R\smallsetminus\{0\}$. This equation fits into a more general one that will be treated in Section \ref{Sec:GeneralEmdenFowler}.

\subsection{The case of the pseudosphere.}\label{Section:ReductionDeSitter}

In what follows, $\grad^S$ and $\square^S$ will denote the gradient and the Laplacian of a function restricted to $\S^{m}_s$ with its corresponding semi-Riemannian norm, while $\grad^L$ and $\square^L$ denote the corresponding gradient and Laplacian in $\R_s^{m+1}$. We will see first that all the isoparametric functions in Example \ref{Ex:IsoparametricDeSitter} fit into a general setting.

We say that an homogeneous polynomial $\Phi:\R_s^m\rightarrow\R$ of degree $\ell$ is a \emph{semi-Riemannian Cartan-M\"{u}nzner polynomial} if it satisfies the the differential equations
\begin{align}
\langle \grad^L\Phi(z),\grad^L\Phi(z) \rangle = \ell^2\langle z,z \rangle^{\ell-1}\\
\square^L\Phi(z)=\ell^2\beta\langle z,z \rangle^{\frac{\ell-2}{2}}
\end{align}
where $\beta\geq 0$ is a constant. 

The functions in Example \ref{Ex:IsoparametricDeSitter} are given by semi-Riemannian Cartan-M\"{u}nzner polynomials. Indeed, in the linear example $\ell=1$ and $\beta=0$. If $$A=\left(\begin{tabular}{cc}
$-I_{k_1}$ & $0$\\
$0$ &  $I_{k_2}$
\end{tabular}\right)=\text{diag}(\underbrace{-1,\ldots,-1}_{k_1},\underbrace{1,\ldots,1}_{k_2}),$$
with $k_1+k_2=m+1$, $k_1>s$, the quadratic examples are also given by this kind of polynomials and satisfy that $\ell=2$ and $\beta=\frac{k_2-k_1}{2}$. For the Clifford we have that $\ell=4$  and $\beta=\frac{k+1-2n}{2}$ (see Appendix \ref{App:DeSitter}).

In the Riemannian setting, the restriction of a Cartan-M\"{u}nzner polynomial to the round sphere $\S^m$ gives an isoparametric function. We next show that also any semi-Riemannian Cartan-M\"{u}nzner polynomial, when restricted to the De Sitter space $\S_s^m$, gives a semi-Riemannian isoparametric function. First we need a semi-Riemannian version of Euler's Theorem.  In what follows, $\grad^{E}$ will denote the gradient of a function with respect to the Euclidean metric  $\langle\cdot,\cdot\rangle_E$, while the semi-Riemannian metric in $\R_s^{m+1}$ will be denote simply by $\langle \cdot,\cdot\rangle$.

\begin{lemma}[Semi-Riemannian Euler's Theorem] If $\Phi:\R^{m+1}\rightarrow\R$ is a homogeneous polynomial of degree $\ell$, then
	\[
	\langle \grad^{L}\Phi,z \rangle= \langle \grad^{E}\Phi,z \rangle_E = \ell \Phi(z).
	\]
\end{lemma}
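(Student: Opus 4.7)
The plan is to observe that both the Euclidean gradient $\grad^{E}\Phi$ and the semi-Riemannian gradient $\grad^{L}\Phi$ are characterized by the same defining property of being metric-dual to the differential of $\Phi$: namely, $g(\grad^{g}\Phi,v)=d\Phi(v)=v(\Phi)$ for every tangent vector $v$, where $g$ is the metric in use. Consequently the directional derivative $v(\Phi)$ agrees with $\langle\grad^{L}\Phi,v\rangle$ and with $\langle\grad^{E}\Phi,v\rangle_{E}$, regardless of the signature of the ambient metric. Specializing to $v=z$ (the position vector viewed as a radial tangent vector), I immediately obtain the first equality
\[
\langle\grad^{L}\Phi,z\rangle \;=\; z(\Phi) \;=\; \langle\grad^{E}\Phi,z\rangle_{E}.
\]

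For the second equality, I invoke the classical Euler identity: differentiating the homogeneity relation $\Phi(\lambda z)=\lambda^{\ell}\Phi(z)$ in $\lambda$ and evaluating at $\lambda=1$, the chain rule yields
\[
z(\Phi)=\sum_{i=1}^{m+1} z_{i}\,\partial_{i}\Phi(z)=\ell\,\Phi(z),
\]
and the middle expression is precisely $\langle\grad^{E}\Phi,z\rangle_{E}$. Stringing the two equalities together gives the claim.

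If one prefers a coordinate verification of the first equality, let $(\epsilon_{1},\dots,\epsilon_{m+1})$ be the signs of the diagonal semi-Riemannian metric, so that $\langle u,v\rangle=\sum_{i}\epsilon_{i}\,u^{i}v^{i}$. Raising indices through the inverse metric produces $(\grad^{L}\Phi)^{i}=\epsilon_{i}\,\partial_{i}\Phi$, and therefore
\[
\langle\grad^{L}\Phi,z\rangle=\sum_{i}\epsilon_{i}\,(\epsilon_{i}\,\partial_{i}\Phi)\,z_{i}=\sum_{i}\partial_{i}\Phi\cdot z_{i},
\]
since $\epsilon_{i}^{2}=1$; the two factors of $\epsilon_{i}$ cancel, which is exactly why the signature is invisible in this pairing. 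There is no substantive obstacle in the proof: the content of the lemma is the metric-independence of the pairing $\langle\grad^{g}\Phi,z\rangle=z(\Phi)$, combined with the standard Euler formula for homogeneous functions.
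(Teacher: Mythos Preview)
Your proof is correct and follows essentially the same route as the paper: both derive the classical Euler identity by differentiating $\Phi(\lambda z)=\lambda^{\ell}\Phi(z)$ at $\lambda=1$, and both verify $\langle\grad^{L}\Phi,z\rangle=\langle\grad^{E}\Phi,z\rangle_{E}$ via the coordinate computation in which the two factors of $\epsilon_i$ cancel. Your added conceptual remark that $\langle\grad^{g}\Phi,v\rangle=d\Phi(v)$ for any nondegenerate metric $g$ is a clean way to see why the signature drops out, but it is not a different argument.
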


\begin{proof}
As $\Phi$ is a homogeneous polynomial of degree $\ell$, then
\begin{equation*}
\Phi(tz)=t^\ell\Phi(z)
\end{equation*}
Now, on the one hand, taking the derivative with respect to $t$ in this identity we have that
	\[
	\sum_{i=1}^{m+1}\frac{\partial \Phi}{\partial z_i}(tz)z_i=\ell t^{\ell-1}\Phi(z).
	\]
	taking $t=1$ and recalling that $\grad^{E}\Phi=\sum_{i=1}^{m+1}\frac{\partial \Phi}{\partial z_i}$, we obtain
	\[
	\langle \grad^{E}\Phi, z\rangle_E =\sum_{i=1}^{m+1}\frac{\partial \Phi}{\partial z_i}(z)z_i=\ell \Phi(z).
	\]
	On the other hand, if $\{e_1,\ldots,e_{m+1}\}$ is the canonical orthonormal frame of $\R^{m}_s$, then
	\[
	\grad^L\Phi=\sum_{i=1}^{m+1}\varepsilon_i\frac{\partial \Phi}{\partial z_j}e_j
	\]
	where, as usual $\varepsilon_i:=\langle e_i, e_i \rangle=-1$ if $1\leq i\leq s$ and  $\varepsilon_i=1$ if $s+1\leq i\leq m+1$.
	Hence, for any given $z=\sum_{i=1}^{m+1}\varepsilon_i z_i$ we have that
	\[
	\langle \grad^L\Phi,z \rangle=\sum_{i,j=1}^{m+1}\varepsilon_i\frac{\partial \Phi}{\partial z_i}z_j\langle e_i, e_j\rangle=\sum_{i=1}^{m+1}(\varepsilon_i)^2\frac{\partial \Phi}{\partial z_i}z_i
	=\langle \grad^{E} \Phi, z\rangle_E= \ell \Phi(z)
	\]
as we wanted.
\end{proof}

\begin{lemma}
	If $\Psi:\R_s^m\rightarrow\R$ is a semi-Riemannian Cartan-M\"{u}nzner polynomial, then the restriction $\varphi:=\Phi\upharpoonright:\S_s^m\rightarrow\R$ is isoparametric and
	\begin{equation}
	\langle \grad^S\varphi,\grad^S\varphi \rangle = \ell^2(1-\varphi^2)\quad\text{and}\quad\square\varphi=\ell^2\beta - \ell(m+\ell-1)\varphi 
	\end{equation}
\end{lemma}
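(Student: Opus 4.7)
The strategy is the standard tangential/normal decomposition on a pseudo-Riemannian hypersurface, combined with Euler's identity (just proved) and the two Cartan--Münzner identities. The position vector $z$ plays the role of a unit normal to $\S_s^m$ since $\langle z,z\rangle=1$ there, and on the ambient flat space $\bar\nabla_X z = X$ for every tangent vector $X$. With these tools, both identities become two-line calculations; the main care needed is with the signs coming from $\langle z,z\rangle = 1$ and from the shape operator/mean curvature of $\S_s^m$.

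\textbf{Gradient identity.} I would first write the tangential decomposition $\grad^L\Phi = \grad^S\varphi + \langle \grad^L\Phi, z\rangle\, z$, valid on $\S_s^m$ because the unit normal has $\epsilon = \langle z,z\rangle = 1$. By the semi-Riemannian Euler lemma just proved, $\langle \grad^L\Phi, z\rangle = \ell\,\Phi$, which on $\S_s^m$ equals $\ell\,\varphi$. Taking the semi-Riemannian square of both sides and using that $\grad^S\varphi\perp z$, together with $\langle z,z\rangle=1$ and the first Cartan--Münzner condition restricted to $\S_s^m$ (so $\langle z,z\rangle^{\ell-1}=1$), gives
\[
\ell^{2} = \langle \grad^S\varphi,\grad^S\varphi\rangle + \ell^{2}\varphi^{2},
\]
i.e.\ $\langle \grad^S\varphi,\grad^S\varphi\rangle = \ell^{2}(1-\varphi^{2})$.

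\textbf{Laplacian identity.} For the Laplacian I would invoke the standard relation between the ambient and intrinsic Laplacians of a restricted function on a semi-Riemannian hypersurface. Pick a local orthonormal frame $\{e_{1},\ldots,e_{m}\}$ of $\S_s^m$ with $\langle e_{i},e_{i}\rangle=\epsilon_{i}$, completed by $z$; writing $\bar\square\Phi = \sum_{i}\epsilon_{i}(\bar\nabla^{2}\Phi)(e_i,e_i) + (\bar\nabla^{2}\Phi)(z,z)$ and applying Gauss's formula $\bar\nabla_{X}Y = \nabla^{S}_{X}Y + \mathbb{I}(X,Y)$, the second fundamental form is computed from $\bar\nabla_X z = X$ as $\mathbb{I}(X,Y) = -\langle X,Y\rangle\,z$. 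Substituting, the tangential sum collapses to $\square^S\varphi + m\,z(\Phi) = \square^S\varphi + m\ell\,\varphi$, the last equality again by Euler.

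\textbf{The $(\bar\nabla^{2}\Phi)(z,z)$ term and conclusion.} To compute $(\bar\nabla^{2}\Phi)(z,z)$ I would differentiate Euler's identity $\langle\grad^L\Phi,z\rangle = \ell\Phi$ along $z$, using $\bar\nabla_z z = z$ in the flat ambient space:
\[
\ell^{2}\Phi = z\bigl(\langle \grad^L\Phi, z\rangle\bigr) = (\bar\nabla^{2}\Phi)(z,z) + \langle \grad^L\Phi, z\rangle = (\bar\nabla^{2}\Phi)(z,z) + \ell\Phi,
\]
so $(\bar\nabla^{2}\Phi)(z,z) = \ell(\ell-1)\,\varphi$ on the pseudosphere. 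Finally, the second Cartan--Münzner condition on $\S_s^m$ reads $\bar\square\Phi = \ell^{2}\beta$, and assembling the pieces yields
\[
\ell^{2}\beta \;=\; \square^{S}\varphi + m\ell\,\varphi + \ell(\ell-1)\,\varphi,
\]
which rearranges to $\square^{S}\varphi = \ell^{2}\beta - \ell(m+\ell-1)\,\varphi$. From the two displayed identities one obtains $a(\varphi) = \ell^{2}\beta - \ell(m+\ell-1)\varphi$ and $b(\varphi) = \ell^{2}(1-\varphi^{2})$ as the polynomials showing $\varphi$ is isoparametric in the sense of \eqref{Eq:Isoparametric}.

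\textbf{Main obstacle.} The only delicate point is bookkeeping of signs in the semi-Riemannian Gauss formula: showing $\mathbb{I}(X,Y) = -\langle X,Y\rangle z$ requires pairing with $z$ (with $\langle z,z\rangle=1$) and using $\bar\nabla_X z = X$, and one must then check that the resulting mean-curvature-type correction has coefficient $+m$ with the paper's sign convention for $\square$. Once this is pinned down, the rest is algebraic manipulation via Euler's identity.
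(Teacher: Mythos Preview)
Your proof is correct and is precisely the argument the paper has in mind: the paper omits the details and simply states that, using the semi-Riemannian Euler theorem, the proof goes line by line as in Theorem~3.30 of \cite{CeRy}, which is exactly the tangential/normal Hessian decomposition you carry out. Your handling of the signs (the second fundamental form $\mathbb{I}(X,Y)=-\langle X,Y\rangle z$ via $\bar\nabla_X z=X$, and the computation of $(\bar\nabla^2\Phi)(z,z)$ by differentiating Euler's identity) is accurate and matches the paper's conventions as used in the examples.
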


\begin{proof}
Using the semi-Riemannian Euler's Theorem, the proof is, line by line, the same as the proof of Theorem 3.30 in \cite{CeRy} and we omit it.
\end{proof}

This result allow us to give a concrete description of the functions $a$ and $b$ given in \eqref{Eq:Isoparametric} in terms of $\ell$, $\beta$ and $n$. Indeed, if $\varphi$ is the restriction to the pseudosphere of a semi-Riemannian Cartan-M\"{u}nzner polynomial, then
\begin{equation}\label{Eq:CartanMunznerExamples}
a(t)=\ell^2\beta-\ell(m+\ell-1)t \quad\text{and}\quad b(t)=\ell^2(1-t^2)
\end{equation}

In what follows, we will suppose that 
\begin{equation}\label{Eq:TechnicalSuppositionBeta}
\frac{m-1}{\ell}\pm\beta>0
\end{equation}
This condition is satisfied by the functions given in Example \ref{Ex:IsoparametricDeSitter}. 

In the Riemannian case, the associated isoparametric hypersurfaces have $\ell\in\{1,2,3,4,6\}$ different principal curvatures and at most two different multiplicities, say $m_1$ and $m_2$ \cite{Mu}. In this scenario, the constant is $\beta=\frac{m_1-m_2}{2}$ and condition \eqref{Eq:TechnicalSuppositionBeta} is always satisfied, for $\frac{m-1}{\ell}\pm\beta=m_i$, $i=1,2$. The same happens for the hypersurfaces defined by the isoparametric function in Example \ref{Ex:IsoparametricDeSitter}, see Appendix \ref{App:DeSitter}. Also, the Cartan-M\"{u}nzner polynomials fully characterize the isoparametric hypersurfaces on the sphere, in the sense that if $S$ is an isoparametric hypersurface, there exists an isoparametric function, say $\varphi$, having it as regular level set and there exists a Cartan-M\"{u}nzner polynomial giving the same isoparametric foliation as $\varphi$, when restricted to $\S^m$. The semi-Riemannian isoparametric hypersurfaces are not that well behaved. As it was shown by Hahn in \cite{Ha}, there are other examples of isoparametric hypersurfaces which are not generated by semi-Riemannian Cartan-M\"{u}nzner polynomials, for instance, the so called \emph{totally isotropic hypersurfaces.} However, this function is also given by the restriction of a homogeneous polynomial. Moreover, Li showed the existence of a great variety of isoparametric hypersurfaces for the Lorentzian De Sitter space \cite{Li}, and as in the case of the semi-Riemannian Euclidean space, the existence of a global isoparametric function defining them is not clear. This leads to the following open questions.

\begin{question}
Given an isoparametric hypersurface $S$ on $\S^m_s$, does there exists a global isoparametric function having it as a level set? Are they all given as a restriction of a homogeneous polynomial?
\end{question}

Also the examples given by semi-Riemannian Cartan-M\"{u}nzner polynomials are far for been fully understood.  We have the following.

\begin{question}
If $\varphi$ is an isoparametric function given as te restriction of a Cartan-M\"{u}nzner polynomial and $S$ is an isoparametric hypersurface determined by $\varphi$, what values of $\ell$ are possible? Are there more than two multiplicities? 
\end{question}

\bigskip

We next reduce the PDE \eqref{Eq:MainYamabeSphereNorm} into the ODE \eqref{Eq:GeneralNonlinearODE} using restrictions of semi-Riemannian Cartan-M\"{u}nzner polynomials.

Let $\varphi:\S_s^m\rightarrow\R$ be given by the restriction of a semi-Riemannian Cartan-M\"{u}nzner polynomial, so that the functions $a$ and $b$ satisfying \eqref{Eq:Isoparametric} are given by \eqref{Eq:CartanMunznerExamples}. By Proposition \ref{Prop:ReductionODE} the Yamabe type equation \eqref{Eq:MainYamabeSphereNorm} takes the form
\begin{equation}\label{Eq:CartanMunznerEntireODE}
(1-t^2)v''+\left[-\frac{m+\ell-1}{\ell} t+\beta\right]v'+\frac{\lambda}{\ell^2}[\vert v\vert^{p-1}v-v]=0, \quad \text{ on }\text{Im}\;\varphi.
\end{equation}

The image of $\varphi$ could be $\R$, $[-1,\infty)$, $[,\infty)$, $(-\infty,-1)$ and $(-\infty,1]$, as shown in Appendix \ref{App:DeSitter}, being $\pm 1$ the only singularities of the equation, which are also the only critical values of the function $\varphi$. In what follows, we will focus on equation \eqref{Eq:CartanMunznerEntireODE} when Im$\,\varphi=\R$, being the other possibilities analogous but simpler. It is evident that the equations \eqref{Eq:CartanMunznerEntireODE} admits always the constant solutions $v\equiv0$ and $v\equiv\pm 1$ and that if $v$ is a solution, then also $-v$ is. As the only singular points are $t=\pm 1$, we will study the equation \eqref{Eq:CartanMunznerEntireODE} separately in $(-\infty,-1]$, $[-1,1]$ and $[1,\infty)$ 

Equation \eqref{Eq:CartanMunznerEntireODE} defined in the interval $[-1,1]$ has been extensively studied recently, and arise naturally when seeking for solutions to Yamabe and Brezis-Li-Nirenberg type problems on the Riemannian round sphere, see, for instance \cite{BeJuPe,BiVe,BrLi,FePaPe,FePe,HePe,JuPe}. The existence of multiple positive solutions for $\Lambda$ big enough was studied in \cite{BeJuPe,BrLi,HePe} using bifurcation methods, whilst the existence of sign-changing solutions was studied in \cite{FePe} for the critical exponent and in \cite{FePaPe,JuPe} for more general exponents, using a double shooting method introduced in \cite{FePe}. Letting $\beta:=\frac{n_1-n_2}{2}$, where $n_1,n_2\in\{0,1,\ldots,m-2\}$ are such that $\frac{(m-1)(\ell-1)}{\ell}=\frac{n_1+n_2}{2}$, and considering the new variables $t=\cos r$ and $w(r)=v(t)$, one can easily check that solving \eqref{Eq:CartanMunznerEntireODE} with its natural boundary conditions 
\begin{equation}\label{Eq:NaturalBoundaryConditions}
\left[-\frac{m+\ell-1}{\ell} (\pm 1)+\beta\right]v'(\pm 1)=-\frac{\lambda}{\ell^2}[\vert v(\pm1)\vert^{p-1}v(\pm 1)-v(\pm 1)]
\end{equation}
is equivalent to solving the problem
\begin{equation}\label{Eq:Compact}
w''+\frac{h_0(r)}{\sin r}w'+\frac{\lambda}{\ell^2}[\vert w\vert^{p-1}w-w]=0,\quad \text{ on }[0,\pi],
\end{equation}
with natural boundary conditions $w'(0)=0=w'(\pi)$, where $h_0(r)=\frac{m-1}{\ell}\cos r-\beta.$ Observe that the initial and final conditions satisfy $w(0)=v(1)$ and $w(\pi)=v(-1)$. The following theorem gathers the results given in \cite{FePaPe,FePe,BeJuPe,JuPe}.

\begin{theorem}\label{Th: k zeroes}
	Fix $\ell\in\{1,2,3,4,6\}$,  $m\geq 3$ and $n_1,n_2\in\{0,1,\ldots,m-2\}$ such that $\frac{(m-1)(\ell-1)}{\ell}=\frac{n_1+n_2}{2}$. Let $\kappa:=\min\{n_1,n_2\}$ and suppose $1<p<\frac{m-\kappa+1}{m-\kappa-1}$. Then there exist a sequence $0=\lambda_{0}<\lambda_1<\cdots<\lambda_j<\cdots$, depending only on $\ell,n_1,n_2,m$ and $p$, and satisfying that $\lambda_j\rightarrow\infty$ as $j\rightarrow\infty$, with the following property: 
	If $\lambda\in(\lambda_{j},\lambda_{j+1}]$, then the boundary value problem \eqref{Eq:Compact} has at least $j$ non constant positive solutions. Moreover, for each $k\in\N$, there exist numbers $d_k$, depending only on $\ell,n_1,n_2,p$ and $\lambda$, with $1<d_1<\cdots<d_k<d_{k+1}<\cdots$ and $d_k\rightarrow\infty$ as $k\rightarrow\infty$, such that the problem \eqref{Eq:Compact}
	admits a solution $w_k$ having exactly $k$ zeroes in $(-1,1)$, initial condition $w_k(0)=d_k$ and final condition $w_k(\pi)>1$ for $k$ even and $w_k(\pi)<-1$ for $k$ odd, satisfying that $\vert w_k(\pi)\vert\rightarrow\infty$ as $k\rightarrow\infty$.
\end{theorem}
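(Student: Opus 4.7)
\smallskip

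Since the statement explicitly gathers results from \cite{FePaPe,FePe,BeJuPe,JuPe}, the proof naturally splits into two independent parts: a bifurcation argument for the positive solutions, and a shooting argument for the nodal solutions $w_k$.

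\emph{Part 1 (Bifurcation).} First I would write the equation in self-adjoint form. The substitution leads to
\[
\bigl(\rho(r) w'\bigr)' + \tfrac{\lambda}{\ell^2}\rho(r)\bigl[|w|^{p-1}w - w\bigr] = 0,
\]
where $\rho$ is the integrating factor determined by $\rho'/\rho = h_0(r)/\sin r$. Linearizing at the constant solution $w\equiv 1$ produces the singular Sturm--Liouville problem
\[
(\rho \varphi')' + \tfrac{\lambda(p-1)}{\ell^2}\rho\,\varphi = 0,\qquad \varphi'(0)=\varphi'(\pi)=0.
\]
The subcriticality hypothesis $p < (m-\kappa+1)/(m-\kappa-1)$ is exactly the Sobolev threshold needed for the weighted embedding $H^1_\rho \hookrightarrow L^{p+1}_\rho$ to be compact, so the linearized problem has a discrete spectrum $0 = \mu_0 < \mu_1 < \mu_2 < \cdots$ with $\mu_j \to \infty$, each eigenvalue simple after accounting for the Neumann conditions at the two singular endpoints. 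Setting $\lambda_j := \ell^2\mu_j/(p-1)$ and applying Rabinowitz's global bifurcation theorem to the trivial branch $\{(\lambda,1)\}$ produces, at each $\lambda_j$, a global continuum of non-constant solutions; a maximum-principle argument shows these remain positive on bounded neighborhoods of $\lambda_j$, giving at least $j$ distinct positive solutions for $\lambda \in (\lambda_j,\lambda_{j+1}]$.

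\emph{Part 2 (Shooting).} For the nodal solutions, I would use the double-shooting method of \cite{FePe}. Consider the initial value problem
\[
w'' + \tfrac{h_0(r)}{\sin r}w' + \tfrac{\lambda}{\ell^2}\bigl[|w|^{p-1}w - w\bigr] = 0,\qquad w(0)=d,\quad w'(0)=0,
\]
whose local existence and uniqueness at $r=0$ is supplied by the singular-ODE appendix. Define
\[
N(d) := \#\bigl\{r\in(0,\pi) : w(r;d)=0\bigr\},\qquad \mathcal{E}(r):=\tfrac{1}{2}(w')^2 + \tfrac{\lambda}{\ell^2}\Bigl[\tfrac{|w|^{p+1}}{p+1}-\tfrac{w^2}{2}\Bigr],
\]
so that $\mathcal{E}'(r) = -\tfrac{h_0(r)}{\sin r}(w')^2$. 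The heart of the argument is to prove that $d\mapsto N(d)$ is continuous in the Prüfer angle sense and that $N(d)\to\infty$ as $d\to\infty$. For the latter, a scaling $w(r) = d\,\tilde w(d^{(p-1)/2} r)$ reduces the equation near $r=0$ to a comparison with a non-singular oscillatory equation of Emden--Fowler type; the subcritical bound $p < (m-\kappa+1)/(m-\kappa-1)$ prevents the amplitude from collapsing before many oscillations occur.

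\emph{Identification of $d_k$.} Once $N$ is continuous and unbounded, for each $k\geq 1$ I would let
\[
d_k := \inf\bigl\{d>1 : N(d)\geq k \text{ and } w'(\pi;d)=0\bigr\},
\]
after showing (using the energy $\mathcal{E}$ and the Neumann shooting function $d\mapsto w'(\pi;d)$) that $w'(\pi;d)$ must vanish for some $d$ in each interval between consecutive zero-count transitions. The sign alternation of $w_k(\pi)$ then follows from parity, since each additional interior zero flips the sign of the final value; the inequalities $w_k(\pi)>1$ for even $k$ and $w_k(\pi)<-1$ for odd $k$ follow because the Neumann condition at $\pi$ together with $\mathcal{E}(\pi) > 0$ (a consequence of subcriticality and the structure of $f(w) = |w|^{p-1}w - w$) forces $|w_k(\pi)|>1$. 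Finally, $|w_k(\pi)|\to\infty$ is obtained by tracking $\mathcal{E}$ across the interval and noting that the total energy dissipated is controlled by $\int_0^\pi h_0(r)/\sin r\,(w')^2\,dr$, which forces the initial amplitudes $d_k$, and hence $|w_k(\pi)|$, to diverge.

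\emph{Main obstacle.} The delicate point is the shooting analysis in the presence of the coefficient $h_0(r)/\sin r$, which is singular at both endpoints and changes sign at $\cos r = \beta\ell/(m-1)$, so $\mathcal{E}$ is neither monotone increasing nor monotone decreasing. This prevents the classical Emden--Fowler oscillation arguments from applying directly, and one must exploit cancellations by splitting $[0,\pi]$ into the dissipative and antidissipative subintervals and comparing with a symmetrized problem. Equally, continuous dependence of $w(\cdot;d)$ in $C^1([0,\pi])$ despite the two singular endpoints requires the sharp local estimates derived from the Emden--Fowler appendix.
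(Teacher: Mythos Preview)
The paper does not give its own proof of this theorem: the statement is explicitly presented as a compilation of results from \cite{FePaPe,FePe,BeJuPe,JuPe} (with \cite{HePe} also relevant for the positive solutions), and it is then used as a black box in the proof of Theorem~\ref{Th:SolutionsDeSitterEntireODE}. So there is no argument in the present paper to compare your sketch against.

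That said, your two-part outline is a faithful summary of what happens in those references: Part~1 is the global bifurcation from the constant branch $w\equiv 1$ carried out in \cite{HePe,BeJuPe}, and Part~2 is the double-shooting method introduced in \cite{FePe} and extended in \cite{FePaPe,JuPe}. One step in your sketch is not quite right: you deduce $|w_k(\pi)|>1$ from ``$\mathcal{E}(\pi)>0$, a consequence of subcriticality,'' but since $h_0/\sin r$ changes sign you have no monotonicity of $\mathcal{E}$ (as you yourself flag in the final paragraph), and subcriticality alone does not force $\mathcal{E}(\pi)>0$. In the cited works this inequality comes instead from the way $d_k$ is chosen at a transition value of the zero count, combined with uniqueness at the singular endpoint $r=\pi$, which rules out $w_k(\pi)\in\{-1,0,1\}$; a phase-plane comparison then excludes $0<|w_k(\pi)|<1$. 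The divergence $|w_k(\pi)|\to\infty$ is likewise read off from $d_k\to\infty$ together with an endpoint energy comparison, not from bounding the total dissipation as you propose.
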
 

The condition $\frac{(m-1)(\ell-1)}{\ell}=\frac{n_1+n_2}{2}$ is equivalent to $\frac{m-1}{\ell}=\frac{m_1+m_2}{2}$, where $m_1$ and $m_2$ are the multiplicities of an isoparametric hypersurface in the Riemannian sphere and, in this case, $n_1$ and $n_2$ are the dimensions of the focal submanifolds associated to the isoparametric family \cite{FePaPe}. 
Notice that the first assertion of the theorem says that if $\lambda<\lambda_1$, then the only positive solution is the constant one. When $\ell=1$, $1<p\leq\frac{m+1}{m-1}$, $\lambda_1$ is given explicitly by $\lambda_1=\frac{m}{p-1}$, and the case $p=\frac{m+1}{m-1}$ and $\lambda_1=\frac{m(m-2)}{4}$ correspond to the axially symmetric solutions of the Yamabe problem \cite{BiVe,BrLi}. It may be interesting to give explicit expression for both $\lambda_j$ and $d_k$ and explore the relations of these numbers with the eigenvalues of the Laplacian on the sphere and the $k$-th Yamabe invariants on the same manifold. A partial result for $\lambda_j$ can be found in \cite{BeJuPe}.

On the other hand, in \cite{BeJuPe,HePe}, it is not clear whether the initial and final conditions of the positive solutions satisfy $\vert w(0)\vert,\vert w(\pi) \vert <1$ or $\vert w(0)\vert,\vert w(\pi) \vert >1$.  Numerical evidence suggests that both cases are possible, contrasting with what happens when considering the sign-changing solutions, for which $\vert w_k(0)\vert,\vert w_k(\pi) \vert >1$ always. In addition, it is not clear  whether $\vert w(0)\vert < d_1$ or not for positive solutions.

\bigskip

Now we write, for simplicity, $\alpha:=-\frac{m+\ell-1}{\ell},$ $f(t):=\Lambda[\vert t\vert^{p-1}t-t]$ with $\Lambda:=\frac{\lambda}{\ell^2}$. We next consider the initial value problem 
\begin{equation}\label{Eq:CartanMunznerPositiveODE}
(1-t^2)v''+[\alpha t+\beta]v'+f(v)=0, \quad \text{ on }I_{\pm}
\end{equation}
with $v(\pm 1)=d>1$ and initial condition on the derivative given by \eqref{Eq:NaturalBoundaryConditions}, where the interval of definition is either $I_+:=[1,\infty)$ or $I_-:=(-\infty,-1]$. For $I_+$, by means of the new variable $t=\cosh r$ and $w(r)=v(t)$, we transform this equation into the Emden-Fowler type equation
\begin{equation}\label{Eq:PositiveODE}
w''+\frac{h_+(r)}{\sinh r}w'-f(w)=0, \quad \text{ on }[0,\infty).
\end{equation}
with initial conditions $w(0)=d$ and $w'(0)=0$, where $h_+(r):=[\frac{m-1}{\ell}\cosh r-\beta]$. On the other hand, for $I_-$, the new variable $t=\cosh(-r)$ and $w(r)=v(t)$ transform equation \eqref{Eq:CartanMunznerPositiveODE} with initial conditions on $t=-1$ into the same equation \eqref{Eq:PositiveODE} and the same initial conditions, but now with the function $h_-:(r):=[\frac{m-1}{\ell}\cosh r + \beta]$ instead of $h_+$. Notice that $h_{\pm}$ is positive and monotone increasing because of \eqref{Eq:TechnicalSuppositionBeta}. Equation \eqref{Eq:PositiveODE} fits into the general form of equation \eqref{Eq:GeneralEmdenFowler}, which is the main topic of the next section.


\section{Existence of global and blowing-up solutions to some generalized Emden-Fowler equations}\label{Sec:GeneralEmdenFowler}

We will focus our attention in the following initial value problem

\begin{equation}\label{Eq:GeneralIVP}
\left\{
\begin{tabular}{cc}
$w''+q(r)w'=\pm f(w)$ & on $[0,\infty)$\\
$w(0)=a,$ $w'(0)=0$ & $a\geq 0$
\end{tabular}
\right.\tag{$A\pm$},
\end{equation}
with $f:\R\rightarrow\R$ an odd and locally Lipschitz continuous function satisfying
\begin{align}
&\lim_{t\rightarrow\infty}\frac{f(t)}{t}=\infty, \tag{$f 1$}\label{Hyp:f1}\\
&\exists\  t_0\geq 0 \;  \text{ s.t. } \; f(t_0)=0  \text{ and } f \text{ is strictly increasing in } (t_0,\infty),\tag{$f 2$}\label{Hyp:f2}\\
&\text{ if }F(t):=\int_{0}^t f(s)ds, \text{ and }F(t)\geq 0 \text{ for every }t\geq t_1\geq t_0, \text{ then } \tag{$f 3$}\label{Hyp:f3}\\
&\quad\quad\quad\quad\quad\quad\quad\quad\quad\quad\int_{t_1+1}^{\infty}\frac{ds}{\sqrt{F(s)}}<\infty;\nonumber 
\end{align}
and $q:(0,\infty)\rightarrow(0,\infty)$ continuously differentiable with the additional properties
\begin{align}
&\lim_{r\rightarrow0}q(r)=\infty\quad \text{ and }\quad \lim_{r\rightarrow 0}\int_{r}^1 q(s)ds=\infty \tag{$q 1$}\label{Hyp:q1}\\
&\lim_{r\rightarrow\infty}q(r)\geq 0 \text{ exists. } \tag{$q 2$}\label{Hyp:q2}\\
&\lim_{r\rightarrow\infty}\frac{q(r)}{r^\alpha}>0 \text{ for some }\alpha\geq -1. \tag{$q 3$}\label{Hyp:q3}\\
&\lim_{r\rightarrow0}-\frac{q^2(r)}{q'(r)}>0\text{ exists. }\tag{$q 4$}\label{Hyp:q4}
\end{align}

Observe that condition \eqref{Hyp:f1} is just saying that $f$ is superlinear. Condition \eqref{Hyp:q1} says that $q\notin L^1([0,1])$ while \eqref{Hyp:q2} says that it is bounded away from zero. Condition \eqref{Hyp:q4} will guarantee the existence and uniqueness of the solution at the singularity $r=0$. Even if all these conditions appear to be very restrictive, there are interesting examples satisfying hypotheses \eqref{Hyp:f1}-\eqref{Hyp:f3} and \eqref{Hyp:q1}-\eqref{Hyp:q4}. For instance, the functions $f(t)=\Lambda\vert t\vert^{p-1}t$ and $q(r)=\frac{\theta}{r}$ with $p>1$ and $\theta,\Lambda>0$,  appearing in equation \eqref{Eq:Main Aneq0 gamma neq 0 Classical}, satisfy them. The same is true for the functions $f(t)=\Lambda[\vert t\vert^{p-1}t-t]$ and $q(r)=\frac{\alpha\cosh{r}\pm\beta}{\sinh(r)}$   with $\Lambda>0$ and $\alpha\pm\beta>0$, appearing in equation \eqref{Eq:PositiveODE}.  Observe that the more general nonlinearity $f(t)=\Lambda\vert t\vert^{p-1}t-\delta \vert t\vert^{s-1}t$ with $1\leq s<p$ and $\Lambda,\delta>0$, also satisfies the conditions \eqref{Hyp:f1}-\eqref{Hyp:f3}.

If $\rho:[0,\infty)\rightarrow[0,\infty)$ is an integrating factor for equation \eqref{Eq:GeneralIVP}, then 
\begin{equation}\label{Eq:IntegratingFactorProperty}
\frac{\rho'(r)}{\rho(r)}=q(r)\quad\text{for each }\ r>0.
\end{equation}
and we can rewrite the equation in divergence form as

\begin{equation}\label{Eq:GeneralDivergenceIVP}
\left\{\begin{tabular}{cc}
$(\rho(r)w')'=\pm \rho(r) f(w)$ & on $[0,\infty)$\\
$w(0)=a,$ $w'(0)=0$ & $a\geq 0$
\end{tabular}\right.
\tag{$B\pm$},
\end{equation}

Condition \eqref{Hyp:q1}-\eqref{Hyp:q3} imply the following properties of the integrating factor.

\begin{lemma}\label{Lemma:PropertiesIntegratingFactor}
The integrating factor is monotone increasing with $\rho(0)=0$, $\lim_{r\rightarrow\infty}\rho(r)=\infty$ and
\begin{equation}\label{Eq:LimitIntegralFactor}
\lim_{r\rightarrow\infty}	\frac{\int_{0}^r\rho(s)ds}{\rho(r)}>0
\end{equation}	
\end{lemma}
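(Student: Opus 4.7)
The plan is to exploit the explicit form of the integrating factor together with a direct application of L'Hôpital's rule. From \eqref{Eq:IntegratingFactorProperty} the integrating factor satisfies $(\log\rho)'=q$, so up to a positive multiplicative constant we may write
\[
\rho(r)=\exp\left(\int_{1}^{r}q(s)\,ds\right),\qquad r>0.
\]
Since $q>0$ by hypothesis, we immediately have $\rho'=q\rho>0$, which gives monotonicity.

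Next I would handle the boundary behavior at $0$ and $\infty$. For $\rho(0)=0$, I write $\rho(r)=\rho(1)\exp\bigl(-\int_{r}^{1}q(s)\,ds\bigr)$ and invoke \eqref{Hyp:q1}, which forces $\int_{r}^{1}q(s)\,ds\to\infty$ as $r\to 0^{+}$, so $\rho(r)\to 0$. For $\rho(r)\to\infty$, I need $\int_{1}^{\infty}q(s)\,ds=\infty$; this I deduce from \eqref{Hyp:q3}, since $q(r)\gtrsim r^{\alpha}$ for some $\alpha\geq -1$, and $r^{\alpha}$ is not integrable at infinity in that entire range (logarithmic divergence at $\alpha=-1$, polynomial divergence otherwise).

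For the final identity \eqref{Eq:LimitIntegralFactor}, since $\rho$ is eventually bounded below by any constant (by what we just proved), $\int_{0}^{r}\rho(s)\,ds\to\infty$ as well, so the quotient is of the indeterminate form $\infty/\infty$. Applying L'Hôpital and using $\rho'=q\rho$,
\[
\lim_{r\to\infty}\frac{\int_{0}^{r}\rho(s)\,ds}{\rho(r)}=\lim_{r\to\infty}\frac{\rho(r)}{\rho'(r)}=\lim_{r\to\infty}\frac{1}{q(r)}.
\]
Hypothesis \eqref{Hyp:q2} guarantees that $L:=\lim_{r\to\infty}q(r)\geq 0$ exists, so the limit above equals $1/L\in(0,\infty)$ if $L>0$, and equals $+\infty$ if $L=0$; in either case it is strictly positive, which is the claim.

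I do not expect any serious obstacle: the only subtle point is confirming $\int_{1}^{\infty}q=\infty$, which is where \eqref{Hyp:q3} (rather than just \eqref{Hyp:q2}) is essential, since \eqref{Hyp:q2} alone would allow $q$ to vanish at infinity fast enough to be integrable.
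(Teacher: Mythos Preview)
Your proof is correct and follows essentially the same approach as the paper: the explicit formula $\rho(r)=\exp\bigl(\int_1^r q\bigr)$, monotonicity from $q>0$, the limits at $0$ and $\infty$ via \eqref{Hyp:q1} and \eqref{Hyp:q3}, and L'H\^opital together with \eqref{Hyp:q2} for \eqref{Eq:LimitIntegralFactor}. Your treatment is actually slightly more careful in distinguishing the cases $L>0$ and $L=0$ in the final step, and your closing remark on why \eqref{Hyp:q3} (and not merely \eqref{Hyp:q2}) is needed for $\rho\to\infty$ is a nice observation.
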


\begin{proof}
	An integrating factor for equation \eqref{Eq:GeneralIVP} is given by
	\[
	\rho(r):=e^{\int_{1}^{r}q(s)ds},\quad r\in(0,\infty)
	\]
	since it satisfies \eqref{Eq:IntegratingFactorProperty}
	
	As $q>0$, $\rho$ is strictly increasing and positive. By \eqref{Hyp:q1}, we have that
	\[
	\lim_{r\rightarrow0}\rho(r)=\lim_{r\rightarrow 0}e^{\int_{1}^{r}q(s)ds}=\lim_{r\rightarrow 0}e^{-\int_{r}^{1}q(s)ds}=0.
	\]
	Now we prove that $\rho$ is unbounded. By \eqref{Hyp:q3}, there exists $D>0$ and $r_1>1$ such that $\frac{q(r)}{r^\alpha}\geq D>0$ for every $r\geq r_1$. Hence
	
\[
\lim_{r\rightarrow\infty}\rho(r) = e^{\int_{1}^{r_1}q(s)ds}e^{\lim_{r\rightarrow\infty} \int_{r_1}^{r}q(s)ds} \geq e^{\int_{1}^{r_1}}e^{D\int_{r_1}^{\infty}s^\alpha ds}=\infty,
\]
because $D>0$ and $\alpha\geq -1$. Finally \eqref{Eq:LimitIntegralFactor} follows directly from the last limit, L'H\^{o}pital's rule, \eqref{Eq:IntegratingFactorProperty} and property \eqref{Hyp:q2}.
\end{proof}

We will consider the following additional hypotheses on $\rho$:

\begin{equation}
\text{there exists } N>0 \text{ s.t. }\frac{1}{\rho(t)}\int_0^t\rho(s)ds\leq N\text{ for every }t\in[0,1]   \tag{$\rho 1$}\label{Hyp:rho}
\end{equation}

This condition, together with \eqref{Hyp:q4}, allows us to prove the local existence and uniqueness to problem \eqref{Eq:GeneralIVP} with natural initial conditions $w(0)=a$ and $w'(0)=0$. To the reader convenience, we prove this fact in Appendix \ref{App:ExistenceUniquenessSingular}. Notice that the integrating factors $\rho(r)=r^\theta$ and $\rho(r)=C(\sinh(r/2))^{\alpha\pm \beta}(\cosh(r/2))^{\alpha\mp\beta}$ for equations 
\eqref{Eq:Main Aneq0 gamma neq 0 Classical} and \eqref{Eq:PositiveODE}, respectively, satisfy condition \eqref{Hyp:rho}, where $C>0$ is a constant.

\bigskip

In what follows, let $0<R\leq\infty$ be such that $[0,R)$ is the maximal interval of existence of a solution to \eqref{Eq:GeneralIVP}. What we will show next is that if we consider the minus sign in \eqref{Eq:GeneralIVP} or in \eqref{Eq:GeneralDivergenceIVP}, then all solutions are globally defined, i.e. $R=\infty$, independently of the initial condition $a\geq 0$, while if we choose the plus sign in the equation and consider $a>t_0$, then all solutions will blow-up in finite time, i.e., $R<\infty$. Obviously in both situations, the stationary solution $w(t)\equiv t_0$ satisfies the problem. 

\subsection{Existence of global and blowing-up solutions.}

We first prove that the solutions to the problem with the ``$-$" sign:
\begin{equation}\label{Eq:GeneralIVPNegative}
\left\{
\begin{tabular}{cc}
$w''+q(r)w'=- f(w)$ & on $[0,\infty)$\\
$w(0)=a,$ $w'(0)=0$ & $a\geq 0$
\end{tabular}
\right.\tag{$A-$},
\end{equation}
are globally defined.

\begin{proposition}\label{Prop:Global existence}
If $w$ is a solution to the problem \eqref{Eq:GeneralIVPNegative} $a\geq 0$, then $R=\infty$. 
\end{proposition}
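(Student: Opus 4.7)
My plan is to prove this by a standard energy (Hamiltonian) argument combined with the continuation principle for ODEs. Define
$$
E(r) := \tfrac{1}{2}\bigl(w'(r)\bigr)^2 + F(w(r)),
$$
where $F(t) = \int_0^t f(s)\,ds$ is the primitive already appearing in hypothesis \eqref{Hyp:f3}. Differentiating along a solution of \eqref{Eq:GeneralIVPNegative} and substituting $w'' = -q(r)w' - f(w)$ gives
$$
E'(r) = w'(r)\bigl(w''(r) + f(w(r))\bigr) = -q(r)\bigl(w'(r)\bigr)^2 \le 0,
$$
since $q$ is positive on $(0,\infty)$. Hence $E$ is non-increasing on $[0,R)$ and
$$
E(r) \le E(0) = F(a) \qquad \text{for all } r \in [0,R).
$$

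Next I would extract uniform $C^1$ bounds on $w$ from this inequality. Because $f$ is odd, (f1) implies $f(t)/t\to\infty$ as $|t|\to\infty$, and (f2) forces $f>0$ on $(t_0,\infty)$. Consequently $F$ is continuous, even, and coercive: $F(t)\to +\infty$ as $|t|\to\infty$. Therefore the sub-level set $\{\,t\in\mathbb R:\ F(t)\le F(a)\,\}$ is a bounded interval, contained in some $[-M,M]$ with $M=M(a)$. Dropping the non-negative kinetic term in $E(r)\le F(a)$ yields $F(w(r))\le F(a)$, so $|w(r)|\le M$ on $[0,R)$. Setting $C' := \max_{[-M,M]}|F|<\infty$ (which is finite by continuity of $F$), the same energy inequality gives
$$
\bigl(w'(r)\bigr)^2 \le 2\bigl(F(a) - F(w(r))\bigr) \le 2\bigl(F(a) + C'\bigr),
$$
a uniform bound on $w'$ as well.

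Finally I would conclude via continuation. The local existence of the solution at the singular point $r=0$ is handled by Appendix \ref{App:ExistenceUniquenessSingular} using \eqref{Hyp:q4} and \eqref{Hyp:rho}, yielding a solution on some interval $[0,r_0]$. On $[r_0,R)$ the coefficients of the equation are smooth, so the usual blow-up alternative asserts that if $R<\infty$, then $\|w\|_{C^1[r_0,R)}$ must be unbounded. The uniform bounds just established rule this out, so $R=\infty$.

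I do not expect any serious obstacle: the negative sign in \eqref{Eq:GeneralIVPNegative} is precisely what makes the $f(w)$ term dissipative in the energy identity, and superlinearity of $f$ is exactly the ingredient that turns an energy bound into a sup-norm bound on $w$. The only delicate point — existence at $r=0$ — is separated out into the appendix, so the argument above only needs to worry about extension from any fixed $r_0>0$.
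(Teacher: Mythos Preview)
Your argument is correct and essentially identical to the paper's: both define the same energy $E(r)=\tfrac{1}{2}(w')^2+F(w)$, use $E'(r)=-q(r)(w')^2\le 0$ to bound $E$ by $E(0)=F(a)$, invoke the coercivity and evenness of $F$ to obtain uniform $C^1$ bounds, and then conclude $R=\infty$ by continuation. The paper is simply more terse, calling the final continuation step ``routinary'' where you spell it out.
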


\begin{proof}
If $w$ is a solution, define the energy function
\[
E(r):=\frac{(w'(r))^2}{2}+ F(w(r)),\quad r\in[0,R)
\]
Notice that $E'(r)=-q(r)\vert w'(r)\vert^2\leq 0,$ because $q>0$. Therefore $E$ is decreasing and $E(r)\leq E(0)$ for every $r\in(0,R)$. As $F$ is even, continuous and $F(t)\rightarrow\infty$ as $t\rightarrow\infty$, the last inequality for the energy implies that $w$ and $w'$ must be uniformly bounded in $(0,R)$. It is now routinary to check that $R=\infty$.
\end{proof}

We now focus on equation \eqref{Eq:GeneralIVP} with the ``plus'' sign and initial condition $a>t_0$, i.e.,
\begin{equation}\label{Eq:GeneralIVPPositive}
\left\{
\begin{tabular}{cc}
$w''+q(r)w'=+ f(w)$ & on $[0,\infty)$\\
$w(0)=a,$ $w'(0)=0$ & $a> t_0$
\end{tabular}
\right.\tag{$A+$},
\end{equation}
From now on, $w$ will denote a solution to this problem. We will prove that for these initial conditions the solution must blow-up in finite time, i.e., we will see that $R<\infty$ and $w(r)\rightarrow \infty$ as $r\rightarrow R$. To do so, we need some preliminary lemmas.

\begin{lemma}\label{Lemma:ww' Positive}
The solution $w$ satisfies that $ww'>0$ in $(0,R)$. In particular $w',w>0$ in $(0,R)$
\end{lemma}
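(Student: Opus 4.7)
The plan is to rewrite \eqref{Eq:GeneralIVPPositive} in divergence form using the integrating factor $\rho$ from Lemma \ref{Lemma:PropertiesIntegratingFactor}, integrate up to the singular point $r=0$ to extract a clean integral identity for $w'$, and then run a continuation argument to rule out a first point at which either $w'$ vanishes or $w$ drops to the level $t_0$.

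First, I would note that \eqref{Eq:GeneralIVPPositive} is equivalent to $(\rho w')' = \rho f(w)$. Integrating over $[\varepsilon,r]\subset (0,R)$ and letting $\varepsilon\to 0^+$, using $\rho(0)=0$ (by Lemma \ref{Lemma:PropertiesIntegratingFactor}) together with the initial condition $w'(0)=0$, I obtain the key identity
\[
\rho(r)\,w'(r)=\int_0^r \rho(s)\,f(w(s))\,ds,\qquad r\in(0,R).
\]
Next, because $w(0)=a>t_0$, continuity of $w$ provides $\delta>0$ with $w>t_0$ on $[0,\delta)$; hypothesis \eqref{Hyp:f2} then gives $f(w(s))>0$ for $s\in(0,\delta)$, and the identity above yields $w'(r)>0$ on $(0,\delta)$.

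The continuation step is the heart of the argument. Define
\[
r_1:=\sup\{\tau\in(0,R)\ :\ w>t_0\text{ and }w'>0\text{ on }(0,\tau)\},
\]
so $r_1\geq\delta>0$. Assume for contradiction $r_1<R$. Since $w$ is strictly increasing on $(0,r_1)$, $w(r_1)\geq a>t_0$, and by \eqref{Hyp:f2} the integrand remains strictly positive up to $r_1$, so the identity gives $\rho(r_1)w'(r_1)>0$; as $\rho(r_1)>0$, this forces $w'(r_1)>0$. Continuity then extends both inequalities $w>t_0$ and $w'>0$ slightly beyond $r_1$, contradicting the definition of $r_1$. Therefore $r_1=R$, and on $(0,R)$ we have $w>t_0\geq 0$ and $w'>0$, proving $ww'>0$.

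The only potential obstacle is the singular coefficient $q(r)$ at $r=0$, which rules out a direct phase-plane analysis of $(w,w')$ near the origin. The divergence form sidesteps this difficulty entirely: the singularity is absorbed into $\rho$, which vanishes at $0$ simultaneously with $w'$, so the boundary terms disappear when passing to the limit $\varepsilon\to 0^+$ without requiring any delicate asymptotic expansion of $w$ provided by the local existence result of Appendix \ref{App:ExistenceUniquenessSingular}.
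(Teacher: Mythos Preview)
Your argument is correct and is essentially a streamlined version of the paper's own proof: both use the integral identity $\rho(r)w'(r)=\int_0^r\rho(s)f(w(s))\,ds$ and the fact that $w>t_0$ forces the integrand to be positive, then extend this information to all of $(0,R)$ by a continuation argument. The main difference is how the argument is \emph{initialized}. The paper invokes hypothesis \eqref{Hyp:q4} to compute $w''(0)=\frac{1}{1+\Gamma}f(a)>0$, deduces $w'>0$ on a small right neighbourhood of $0$ from the sign of $w''$, and only afterwards appeals to the integral formula to exclude later zeros of $w'$; you bypass the second derivative entirely by using continuity of $w$ at $0$ (so $w>t_0$ nearby) and reading off $w'>0$ directly from the integral formula. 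Your route is slightly more economical since it does not use \eqref{Hyp:q4} at this stage, whereas the paper's approach makes the local convexity at the origin explicit, which is conceptually pleasant but not logically needed here.
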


\begin{proof}
By hypothesis \eqref{Hyp:q4}, if $\Gamma:=\lim_{r\rightarrow0}-\frac{q^2(r)}{q'(r)}>0$, we have that
\begin{equation}\label{Eq:InitialConditionSecondDerivative}
w''(0)=\frac{1}{1+\Gamma}f(a)>0,
\end{equation} 
Thus, continuity of $w''$ gives the existence of some $r_0>0$ such that $w''(r)>0$ for every $r\in[0,r_0)$ and $w'$ is monotone increasing in the same interval. As $w'(0)=0$, this implies that $w'>0$ in $(0,r_0)$, yielding at the same time that $w(r)>w(0)=a>t_0$. Hence $ww'>0$ in $(0,r_0)$. Notice that if it happens that $w'(r_1)=0$ for some $r_1>r_0$, then as $w'\geq 0$  in $[r_0,r_1]$, we would have that $w(r_1)\geq w(r_0)>t_0$ and, therefore, $w''(r_1)=f(w(r_1))>0$ again; so, the same argument yields that $ww'>0$ in an interval $(r_1,r_2)$ for some $r_2>r_1$. Therefore $w'$ can not change sign, $w'w\geq 0$ in $(0,R)$, $w$ is not decreasing and $w(r)\geq w(0)=a>t_0$. To conclude, integrate in (B+) from $0$ to $r$ and observe that
\[
w'(r)=\frac{1}{\rho(r)}\int_{0}^r\rho(s) f(w(s)) ds >0
\]
for every $r>0$, because $\rho>0$  and $f(w(r))>0$  by \eqref{Hyp:f2}.
\end{proof}

\begin{lemma}\label{Lemma:Limit w}
If $R=\infty$, then
\[
\lim_{r\rightarrow\infty}w(r)=\infty.
\]
\end{lemma}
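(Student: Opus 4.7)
The plan is to argue by contradiction, exploiting the monotonicity of $w$ established in Lemma \ref{Lemma:ww' Positive} together with the lower bound on the integrating factor coming from Lemma \ref{Lemma:PropertiesIntegratingFactor}. Assuming $R=\infty$, the bound $w'>0$ on $(0,\infty)$ shows that $w$ is strictly increasing, so the limit $L:=\lim_{r\to\infty}w(r)\in(a,\infty]$ exists. Suppose for contradiction that $L<\infty$.

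Since $w(r)\geq a>t_0$ for every $r\geq 0$ and $f$ is strictly increasing on $(t_0,\infty)$ by \eqref{Hyp:f2}, we obtain the uniform bound $f(w(r))\geq f(a)>0$. Writing the equation in divergence form \eqref{Eq:GeneralDivergenceIVP} with the ``$+$'' sign, integrating from $0$ to $r$ and using the initial condition $w'(0)=0$ together with $\rho(0)=0$ from Lemma \ref{Lemma:PropertiesIntegratingFactor}, we get
\[
\rho(r)w'(r)=\int_{0}^{r}\rho(s)f(w(s))\,ds\geq f(a)\int_{0}^{r}\rho(s)\,ds,
\]
so
\[
w'(r)\geq f(a)\,\frac{\int_{0}^{r}\rho(s)\,ds}{\rho(r)}.
\]

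Now property \eqref{Eq:LimitIntegralFactor} of Lemma \ref{Lemma:PropertiesIntegratingFactor} guarantees that the limit $\ell:=\lim_{r\to\infty}\rho(r)^{-1}\int_{0}^{r}\rho(s)\,ds$ is strictly positive. Hence there exist $r_1>0$ and a constant $c>0$ (say $c=f(a)\,\ell/2$) such that $w'(r)\geq c$ for all $r\geq r_1$. Integrating this inequality from $r_1$ to $r$ yields $w(r)\geq w(r_1)+c(r-r_1)\to\infty$ as $r\to\infty$, contradicting $L<\infty$. Therefore $L=\infty$, which is exactly the claim.

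I do not foresee a real obstacle: the whole argument hinges on turning the positive lower bound $f(w)\geq f(a)$ into a linear lower bound for $w$ via the integrating factor, and the requisite positivity $\liminf \rho(r)^{-1}\int_0^r\rho(s)\,ds>0$ has already been extracted from the hypotheses on $q$ in the previous lemma. If anything, the only point to be careful about is ensuring that the boundary term at $0$ in the integration vanishes, which is guaranteed by $\rho(0)=0$ and $w'(0)=0$.
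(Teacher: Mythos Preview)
Your proof is correct and follows essentially the same route as the paper: use the monotonicity from Lemma~\ref{Lemma:ww' Positive} to get $f(w(r))\geq f(a)>0$, integrate the divergence form $(\rho w')'=\rho f(w)$ to bound $w'$ below via $\rho^{-1}\int_0^r\rho$, invoke \eqref{Eq:LimitIntegralFactor} to obtain $w'(r)\geq c>0$ for large $r$, and integrate once more. The only cosmetic difference is that you frame it as a contradiction, whereas the paper argues directly; your contradiction hypothesis $L<\infty$ is never actually used, so you could drop it.
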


\begin{proof}
From the previous lemma, $w$ is monotone increasing in $[0,\infty)$ and, as $f$ is also monotone increasing in $(t_0,\infty)$, we have that $f(w(r))\geq f(w(0))=f(a)$. From this and equation (B+) we obtain
\[
(\rho w')'=\rho f(w)\geq \rho f(a)\quad\text{ for every }r>0. 
\]
By \eqref{Eq:IntegratingFactorProperty}, there exist $C>0$ and $R_0>0$ such that $\int_0^r \rho(s)ds\geq C\rho(r)$, for every $r\geq R_0$. So, integrating from $0$ to $r$ we get that
\begin{equation*}
w'(r)\geq f(a)\frac{\int_0^r\rho(s)ds}{\rho}\geq f(a)C,
\end{equation*}
 for every $r>R_0$.
 
Integrating the last inequality from $R_0$ to $r$ we get that
\[
w(r)\geq w(r)-w(R_0)\geq f(a)C(r-R_0)>0,
\]
since $w(R_0),C,f(a)>0$ and since $w$ is monotone increasing. Taking limits on both sides as $r\rightarrow\infty$ we conclude.

\end{proof}

We now prove the blow-up of the solutions to \eqref{Eq:GeneralIVPPositive}. Our proof is a modification of the classic argument by Osserman \cite{Os}.

\begin{proposition}\label{Prop:Blow-Up}
If $w$ is a solution to \eqref{Eq:GeneralIVPPositive} with $w(0)=a>t_0$, then $R<\infty$ and $\lim_{r\rightarrow R}w(r)=\infty$.
\end{proposition}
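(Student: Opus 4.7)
The plan is to adapt Osserman's classical blow-up argument to the damped setting, using the superlinearity $(f1)$ to dominate the dissipation term $q w'$.

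I would argue by contradiction: suppose $R=\infty$. From Lemma \ref{Lemma:ww' Positive} and Lemma \ref{Lemma:Limit w}, $w$ is strictly increasing with $w(r)\to\infty$ as $r\to\infty$. Multiplying the equation in \eqref{Eq:GeneralIVPPositive} by $w'$ and integrating from $0$ to $r$ produces the energy identity
\[
\frac{(w'(r))^{2}}{2} + \int_{0}^{r} q(s)(w'(s))^{2}\, ds = F(w(r)) - F(a),
\]
so, since $q,w'>0$, one obtains the a priori upper bound $(w'(r))^{2}\le 2F(w(r))$ for $r$ large enough that $F(w(r))\ge |F(a)|$.

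The next step is to absorb the damping into the nonlinearity. Since $f$ is monotone increasing on $(t_{0},\infty)$, one has $F(w) = \int_0^w f \le C + w f(w)$, hence $\sqrt{F(w)}\le C' \sqrt{w f(w)}$ for $w$ large. Using $(q2)$ to fix $Q$ with $q(r)\le Q$ for $r\ge r_{0}$, the previous bound gives $q(r) w'(r) \le C'' Q\sqrt{w f(w)}$; the superlinearity $(f1)$ then provides $f(w)/w\to\infty$, so $q w' \le f(w)/2$ once $w$ is sufficiently large. This yields the differential inequality
\[
w''(r) = f(w(r)) - q(r) w'(r) \ge \tfrac{1}{2} f(w(r)), \qquad r\ge r_{1},
\]
for some $r_{1}$ large. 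Multiplying by $w'>0$ and integrating from $r_{1}$ produces the reverse energy estimate $(w'(r))^{2}\ge \tfrac{1}{2} F(w(r))$ for $r$ large, after which separation of variables yields
\[
r - r_{1} \;=\; \int_{w(r_{1})}^{w(r)}\frac{dw}{w'(r(w))} \;\le\; \sqrt{2}\int_{w(r_{1})}^{\infty}\frac{dw}{\sqrt{F(w)}} \;<\;\infty
\]
by $(f3)$, provided $r_{1}$ is taken so that $w(r_{1})\ge t_{1}+1$. This uniform bound on $r$ contradicts $R=\infty$. Maximality of the existence interval together with monotonicity of $w$ then force $w(r)\to\infty$ as $r\to R$.

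The crucial step is the absorption $q w' \le f(w)/2$: it is made possible only by combining the crude upper bound $(w')^{2}\le 2F(w)$ with the superlinearity $f(w)/w\to\infty$, which together allow the damping to be controlled in terms of $f(w)$. Once the differential inequality $w''\ge f(w)/2$ is in place, the remainder of the argument mirrors the undamped Osserman case, with $(f3)$ doing the final work of bounding the blow-up time.
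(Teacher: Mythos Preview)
Your argument is correct and follows essentially the same Osserman-type strategy as the paper: obtain an a priori upper bound on $w'$ in terms of the nonlinearity, use the superlinearity $(f1)$ together with the bound $(q2)$ on $q$ to absorb the damping and arrive at $w''\ge\tfrac12 f(w)$, then derive a lower bound on $w'$ and invoke $(f3)$ to bound the lifespan. The only cosmetic difference is that the paper bounds $(w')^2\le 2f(w)\,(w-a)$ directly from $w''\le f(w)$, whereas you pass through the energy identity to get $(w')^2\lesssim F(w)\lesssim w f(w)$; since $\int_a^{w}f\le f(w)(w-a)$ these are the same estimate, and the absorption step and the final Osserman integral are identical.
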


\begin{proof}
Suppose, in order to get a contradiction, that $R=\infty$. By \eqref{Hyp:q2}, there exists $R_1>0$ and $M>0$ such that 
\begin{equation}\label{Hyp:q2prime}
q(r)\leq M \quad \text{for every }\ r\geq R_1.
\end{equation}
Given $a>t_0$ and
\begin{equation}\label{Eq:DefinitionN}
N:=8M^2>0,
\end{equation}
by \eqref{Hyp:f1}, there exists $t_2>a>t_0$ such that
\begin{equation}\label{Hyp:f1prime}
\frac{1}{N}f(t)\geq t-a\geq 0 \quad \text{for every }\ t\geq t_2.
\end{equation}

As $w'>0$ in $(0,\infty)$ by Lemma \ref{Lemma:ww' Positive}, $w$ is increasing and $w(r)>w(0)=a>t_0$ for every $r\in(0,\infty)$, implying that the function $f(w)$ is also monotone increasing in $(0,\infty)$. Since $q\geq 0$ and $w'>0$, from equation (B+) we have that
\[
w''\leq w'' + qw'=f(w) \quad \text{ in }\ [0,\infty).
\]
Multiplying by $w'>0$, we get that
\[
\left(\frac{(w')^2}{2}\right)'=w''w'\leq f(w)w' \quad \text{ in }\ [0,\infty).
\]
Therefore, integrating from $0$ to $r$, using that $w'(0)=0$ and noticing that $f(a)\leq f(t)\leq f(w(r)$ for every $t\in[a,w(r)]$ we deduce that
\begin{equation}\label{Eq:FundamentalInequalityBlowUp}
\begin{split}
(w'(r))^2&\leq 2\int_{0}^r f(w(s))w'(s)ds=2\int_{a}^{w(r)} f(t)dt\\
&\leq 2f(w(r))\int_{a}^{w(r)}ds = 2f(w(r))[w(r)-a].
\end{split}
\end{equation}
However, Lemma \ref{Lemma:Limit w} provide us the existence of $R_2>R_1$ such that $w(r)\geq \max\{t_2,t_1+1\}$ for every $r\geq R_2$, yielding by \eqref{Hyp:f1prime} that $\frac{1}{N}f(w(r))\geq w(r)-a$. On the one hand, As $R_2>R_1$, substituting this into \eqref{Eq:FundamentalInequalityBlowUp} we obtain that
\[
(w'(r))^2\leq 2f(w)[w-a]\leq\frac{2}{N}(f(w))^2.
\]
Taking square roots at both sides of the inequality and multiplying by $q>0$ we obtain
\begin{equation}\label{Eq:FundamentalInequalityBlowUp2}
q(r)w'(r)\leq \sqrt{\frac{2}{N}}\; q(r)f(w(r)),\quad\text{ for every }\ r\geq R_2.
\end{equation}
On the other hand, as $R_2>R_1$, then \eqref{Hyp:q2prime} holds true for every $r\geq R_2$ and, consequently from \eqref{Eq:FundamentalInequalityBlowUp2} we obtain
\[
q(r)w'\leq \sqrt{\frac{2}{N}}q(r)f(w(r))\leq M\sqrt{\frac{2}{N}}f(w(r))=\frac{1}{2}f(r)\quad\text{ for every }r\geq R_2
\]
by the definition of $N$ in \eqref{Eq:DefinitionN}. Equation \eqref{Eq:GeneralIVPPositive} and the last inequality yield that
\[
f(w)=w''+q(r)w'\leq w''+\frac{1}{2}f(w)\ \text{ in }[R_2,\infty)
\]
or, equivalently
\begin{equation}\label{Eq:FundamentalInequalityBlowUp3}
	0<\frac{1}{2}f(w(r))\leq w''(r)\quad \text{ for every }\ r\geq R_2
\end{equation}
As $w'>0$ in $[R_2,\infty)$, multiplying both sides of the inequality by $w'$, we obtain
\[
\frac{1}{2}f(w)w'\leq w'' w'=\left(\frac{(w')^2}{2}\right)'\quad\text{ in }[R_2,\infty).
\]
Integrating from $R_2$ to $r$ we get
\begin{equation}\label{Eq:FundamentalInequalityBlowUp4}
\int_{R_2}^rf(w(s))w'(s)ds\leq (w'(r))^2,\quad\text{for every }r\geq R_2
\end{equation}
Now observe that if $r\geq R_2$, then $f(w(s))w'(s)>0$ for every $s\in[R_2,r]$, hence
\begin{equation*}
0<\int_{R_2}^rf(w(s))w'(s)=\int_{0}^{w(r)}f(s) ds - \int_{0}^{w(R_2)}f(s)ds=F(w(r))-C_1(R_2),
\end{equation*}
where $C_1(R_2):=\int_{0}^{w(R_2)}f(s)$ is constant (not necessarily positive). Hence, \eqref{Eq:FundamentalInequalityBlowUp4} can be written as
\[
1\leq\frac{w'(r)}{\sqrt{F(w(r))-C_1(R_2)}}\quad\text{ for every }r\geq R_2,
\]

Integrating from $R_2$ to $r$  we have that
\[
r-R_2\leq\int_{w(R_2)}^{w(r)}\frac{ds}{\sqrt{F(s)-C_1(R_2)}}\leq\int_{w(R_2)}^\infty\frac{ds}{\sqrt{F(s)-C_1(R_2)}}
\]
As $r>R_2$ is arbitrary, letting $r\rightarrow\infty$ we get that $\int_{w(R_2)}^\infty\frac{ds}{\sqrt{F(s)-C_1(R_2)}}=\infty$. But this is a contradiction, since $\int_{w(R_2)}^\infty\frac{ds}{\sqrt{F(s)-C_1(R_2)}}<\infty$ because  $w(R_2)\geq t_1+1$ and  \eqref{Hyp:f3}. Hence $R<\infty$.

To finish, we prove that $w(r)\rightarrow\infty$ as $r\rightarrow R$. As $w$ is monotone increasing in $(0,R)$, the limit exists. Considering the energy $\hat{E}(r):=\frac{\vert w'(r)\vert^2}{2}-F(w(r))$, which is non increasing, for $\hat{E}'(r)=-q(r)\vert w'(r)\vert^2\leq 0$. Thus, if $\lim_{r\rightarrow R}w(r)<\infty$, both $\vert w\vert$ and $\vert w'\vert$ must be bounded in $[0,R)$ and a standard argument yields that the solution must exist in $[0,R+\varepsilon)$ for some $\varepsilon>0$, contradicting the maximality of $R$, and the limit follows. 
\end{proof}

We can apply this result to nonlinearities of the form $f(t)=\vert t\vert^{p-1}t-\vert t\vert^{s-1}t$, for $1\leq s<p$. We have the following

\begin{corollary}
Let $1\leq s<p$ and $\alpha,\beta,\theta\in\R$ such that $\alpha\pm\beta>0$ and $\gamma>0$. Then all the solutions to the problem
\[
\left\{\begin{tabular}{cc} 
$w''+q(r)w'= \vert w\vert^{p-1}w-\vert w\vert^{s-1}w$ & in $[0,\infty)$\\
$w(0)>1$,\quad $w'(0)=0$ & 
\end{tabular}\right.
\]	
with $q(r)=\frac{\alpha\cosh r \pm \beta }{\sinh r}$ or $q(r)=\frac{\theta}{r}$, are monotone increasing and blow up in finite time.
\end{corollary}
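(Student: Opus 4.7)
The plan is to simply verify that the pair $(f,q)$ of the corollary fits the framework of Proposition \ref{Prop:Blow-Up}, so that blow-up in finite time and monotonicity follow at once. Setting $f(t)=|t|^{p-1}t-|t|^{s-1}t$, oddness and local Lipschitz continuity are immediate. Hypothesis \eqref{Hyp:f1} follows from $f(t)/t=t^{p-1}-t^{s-1}\to\infty$ as $t\to\infty$ since $p>s\geq 1$. For \eqref{Hyp:f2}, $f$ vanishes exactly at $0$ and at $t_0=1$, and for $t>1$ we have $f'(t)=t^{s-1}(pt^{p-s}-s)>0$, so $f$ is strictly increasing on $(1,\infty)$. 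For \eqref{Hyp:f3}, $F(t)=\frac{t^{p+1}}{p+1}-\frac{t^{s+1}}{s+1}\sim\frac{t^{p+1}}{p+1}$, and since $p>1$ we get $(p+1)/2>1$, so $\int^{\infty}ds/\sqrt{F(s)}<\infty$. The initial condition $w(0)>1=t_0$ is exactly the hypothesis required in \eqref{Eq:GeneralIVPPositive}.

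For $q(r)=\theta/r$ with $\theta>0$: $q\to\infty$ at $0$ and $\int_r^1 \theta/s\,ds=-\theta\log r\to\infty$, giving \eqref{Hyp:q1}; $q\to 0$ at infinity gives \eqref{Hyp:q2}; $q(r)/r^{-1}=\theta>0$ gives \eqref{Hyp:q3} with $\alpha=-1$; and $-q^2/q'=\theta$ gives \eqref{Hyp:q4}. The integrating factor is $\rho(r)=r^\theta$, for which $\frac{1}{\rho(t)}\int_0^t \rho(s)\,ds=\frac{t}{\theta+1}\leq\frac{1}{\theta+1}$ on $[0,1]$, yielding \eqref{Hyp:rho}.

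For $q(r)=(\alpha\cosh r\pm\beta)/\sinh r$ with $\alpha\pm\beta>0$: near $r=0$ we expand $\cosh r=1+O(r^2)$ and $\sinh r=r+O(r^3)$ to get $q(r)\sim(\alpha\pm\beta)/r$, so \eqref{Hyp:q1} holds and a direct computation gives $-q^2/q'\to\alpha\pm\beta>0$, i.e.\ \eqref{Hyp:q4}. As $r\to\infty$, $q(r)\to\alpha$, a finite positive limit, proving \eqref{Hyp:q2} and also \eqref{Hyp:q3} with $\alpha=0$. The integrating factor $\rho(r)=C(\sinh(r/2))^{\alpha\pm\beta}(\cosh(r/2))^{\alpha\mp\beta}$, explicitly given in the text just before Proposition \ref{Prop:Global existence}, behaves like $C' r^{\alpha\pm\beta}$ near $0$, so $\frac{1}{\rho(t)}\int_0^t\rho(s)\,ds$ is bounded on $[0,1]$ by a constant depending only on $\alpha\pm\beta$, giving \eqref{Hyp:rho}.

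With all hypotheses verified, Proposition \ref{Prop:Blow-Up} yields the existence of $R<\infty$ with $\lim_{r\to R}w(r)=\infty$, while Lemma \ref{Lemma:ww' Positive} gives $w'>0$ on $(0,R)$, hence monotone increasing. The main subtlety is the routine but slightly tedious check of \eqref{Hyp:q4} and \eqref{Hyp:rho} for the hyperbolic weight $q$, where one must use the factorization $\alpha\cosh r\pm\beta=(\alpha\pm\beta)\cosh^2(r/2)+(\alpha\mp\beta)\sinh^2(r/2)$ together with $\sinh r=2\sinh(r/2)\cosh(r/2)$ to obtain a clean asymptotic expansion and to recognize $\rho$; apart from that the corollary is a direct plug-in.
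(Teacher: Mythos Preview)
Your proof is correct and follows exactly the paper's approach: the paper's own proof is the single sentence ``The functions $q$ and $f(t)=\vert t\vert^{p-1}t-\vert t\vert^{s-1}t$ satisfy the properties \eqref{Hyp:q1} to \eqref{Hyp:q4}, \eqref{Hyp:f1} to \eqref{Hyp:f3} and \eqref{Hyp:rho},'' and you have simply written out that verification in detail. One cosmetic point: when you invoke \eqref{Hyp:q3} for the hyperbolic weight you write ``with $\alpha=0$,'' which clashes with the parameter $\alpha$ already in $q$; it would be cleaner to say ``with exponent $0$.''
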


\begin{proof}
The functions $q$ and $f(t)=\vert t\vert^{p-1}t-\vert t\vert^{q-1}t$ satisfy the properties \eqref{Hyp:q1} to \eqref{Hyp:q4}, \eqref{Hyp:f1} to \eqref{Hyp:f3} and \eqref{Hyp:rho}.
\end{proof}


\subsection{Classical Emden-Fowler Equations and proof of Theorems \ref{Th:EmdenFowlerMinkowski} and \ref{Th:GeometrySolutions}}\label{Sec:Proof 1}

From the previous subsection, we described the qualitative behavior of the solutions to the problem \eqref{Eq:GeneralIVPPositive}, while we only showed the global existence of the solutions to the problem \eqref{Eq:GeneralIVPNegative}. In this section, we will focus on describing the qualitative behavior of the solutions to the last equation for the function $q(r)=\frac{\theta}{r}$ and the nonlinearity $f(t)=\Lambda\vert t\vert^{p-1}t$. Therefore, we are led to consider the equation
\begin{equation}\label{Eq:GeneralEmdenFowlerMinkowski}
\left\{\begin{tabular}{cc}
$w''+\frac{\theta}{r} w' = -\Lambda \vert w\vert^{p-1}w$   & in $[0,\infty)$\\
 $w(0)>0$,\quad  $w'(0)=0$. 
\end{tabular}\right.
\end{equation}
with $\theta\geq0$, $\Lambda\in\R\smallsetminus\{0\}$ and $p>1$, which includes \eqref{Eq:MainReduction A=0}, \eqref{Eq:Main A neq 0 gamma=0} and \eqref{Eq:Main Aneq0 gamma neq 0 general} as particular cases.

We have a Pohozaev type identity for the solutions to this equation

\begin{lemma}\label{Lemma:Pohozaev}
If $w\in C^2([0,\infty))$ is a solution to problem \eqref{Eq:GeneralEmdenFowlerMinkowski} with $\Lambda>0$, then for every $r>0$ the following identity holds true
\[
-r^{\theta+1}E_w(r) - \frac{\theta - 1}{2} r^{\theta}w(r)w'(r)=\Lambda\left[\frac{\theta-1}{2}-\frac{\theta+1}{p+1}\right]\int_0^r s^{\theta}\vert w(s)\vert^{p+1}ds
\]
where $E_w$ denotes the energy of the solution $w$, given by
\[
E_w(r):=\frac{1}{2}\vert w'(r)\vert^2+\frac{\Lambda}{p+1}\vert w(r)\vert^{p+1}.
\]
\end{lemma}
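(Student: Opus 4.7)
The approach I would take is the classical Pohozaev multiplier trick, adapted to the singular weighted radial equation. The key observation is that \eqref{Eq:GeneralEmdenFowlerMinkowski} is equivalent to the divergence form $(r^{\theta}w')' = -\Lambda r^{\theta}|w|^{p-1}w$, so one should combine an ``energy multiplier'' (i.e.\ $r^{\theta+1}w'$) with a ``Nehari multiplier'' (i.e.\ $w$) and pick the unique linear combination that eliminates the $(w')^{2}$ terms.

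First I would multiply the equation by $r^{\theta+1}w'$ and rewrite each term as a total derivative. Using
\[
\frac{d}{dr}\!\left[\tfrac{1}{2}\,r^{\theta+1}(w')^{2}\right] = r^{\theta+1}w''w' + \tfrac{\theta+1}{2}\,r^{\theta}(w')^{2}
\]
together with the analogous identity
\[
\frac{d}{dr}\!\left[\tfrac{\Lambda}{p+1}\,r^{\theta+1}|w|^{p+1}\right] = \Lambda r^{\theta+1}|w|^{p-1}w w' + \tfrac{\Lambda(\theta+1)}{p+1}\,r^{\theta}|w|^{p+1},
\]
the equation rewrites as
\[
\frac{d}{dr}\bigl[r^{\theta+1}E_{w}(r)\bigr] = -\tfrac{\theta-1}{2}\,r^{\theta}(w')^{2} + \tfrac{\Lambda(\theta+1)}{p+1}\,r^{\theta}|w|^{p+1}. \qquad (\mathrm{I})
\]
Second, multiplying the divergence form by $w$ yields the Nehari-type identity
\[
(r^{\theta}w w')' = r^{\theta}(w')^{2} - \Lambda r^{\theta}|w|^{p+1}. \qquad (\mathrm{II})
\]

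The final step is to multiply $(\mathrm{II})$ by $(\theta-1)/2$ and add it to $(\mathrm{I})$, so that the $r^{\theta}(w')^{2}$ terms cancel exactly, leaving
\[
\frac{d}{dr}\!\left[r^{\theta+1}E_{w}(r) + \tfrac{\theta-1}{2}\,r^{\theta}w w'\right] = \Lambda\!\left[\tfrac{\theta+1}{p+1} - \tfrac{\theta-1}{2}\right]r^{\theta}|w|^{p+1}.
\]
Integrating this from $0$ to $r$ and multiplying through by $-1$ produces the claimed identity, provided the boundary contribution at the origin vanishes. Since $w \in C^{2}([0,\infty))$ with $w'(0)=0$ and $\theta \geq 0$, both $s^{\theta+1}E_{w}(s)$ and $s^{\theta}w(s)w'(s)$ tend to $0$ as $s \to 0^{+}$, so this is automatic.

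The computation is essentially bookkeeping, and the only real ``choice'' is the coefficient $(\theta-1)/2$, which is forced on us by the requirement that the $(w')^{2}$ terms cancel. The only analytic subtlety is the vanishing of the boundary contribution at the singularity $r=0$, which follows at once from the $C^{2}$ regularity together with $w'(0)=0$ and $\theta \geq 0$, so no serious obstacle is expected.
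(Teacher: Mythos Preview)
Your proof is correct. The paper's own argument is a one-line appeal to the Pucci--Serrin variational identity \cite{PuSe} applied to the Lagrangian $\mathcal{F}(s,u,q)=s^{\theta}\bigl[\tfrac{1}{2}q^{2}-\tfrac{\Lambda}{p+1}|u|^{p+1}\bigr]$ with the multipliers $h(r)=r$ and $a(r)\equiv\tfrac{\theta-1}{2}$. Your direct computation is precisely the unpacking of that citation: the Pucci--Serrin choice $h(r)=r$ corresponds to your ``energy multiplier'' $r^{\theta+1}w'$ (i.e.\ $h\cdot\rho w'$ with $\rho=r^{\theta}$), the choice $a(r)=\tfrac{\theta-1}{2}$ is exactly the coefficient you put in front of the ``Nehari multiplier'' $w$, and the cancellation of the $(w')^{2}$ terms is what determines that coefficient in both presentations. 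So the two proofs are the same argument---yours is self-contained and makes the mechanism explicit, while the paper's is compact but requires the reader to look up \cite{PuSe}.
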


\begin{proof}
It is a straightforward application of the Pucci-Serrin's variational identity (see \cite[Proposition 1]{PuSe}), applied to the one dimensional Lagrangian $\mathcal{F}(s,u,q):=s^\theta\left[ \frac{1}{2}q^2 -  \frac{\Lambda}{p+1}\vert u\vert^{p+1}\right]$, considering the functions $h(r):=r$ and $a(r)\equiv\frac{\theta -1}{2}$.
\end{proof}

A solution to problem \eqref{Eq:GeneralEmdenFowlerMinkowski} will be called \emph{proper} if it is defined for all $r>0$. On the other hand, if there exists $R<\infty$ such that $\vert w(r)\vert\rightarrow\infty$ as $r\rightarrow R$, we will say that the solution \emph{blows up in finite time}. We say that a proper solution is \emph{stable} if $\lim_{r\rightarrow\infty}w(r)=0$, otherwise it is called \emph{not stable}. Finally, we say that a proper solution is \emph{oscillatory} if $w$ possesses arbitrarily large zeroes, that is, given any $M>0$, there exists $r_0>M$ such that $w(r_0)=0$. We have the following theorem

\begin{theorem}\label{Th:EmdenFowlerMinkowski}
Let $\Lambda\in\R\smallsetminus\{0\}$, $\theta\geq 0$ and $p>1$. 
\begin{itemize}
	\item If $\Lambda<0$, all the solutions to \eqref{Eq:GeneralEmdenFowlerMinkowski} are positive, strictly increasing and blow up in finite time.
	\item If $\Lambda>0$, then the solutions to \eqref{Eq:GeneralEmdenFowlerMinkowski} are proper with $w$ and $w'$ uniformly bounded in $[0,\infty)$. Moreover,
	\begin{itemize}
		\item if $\theta=0$, all the solutions are oscillatory and not stable;
		\item if $0<\theta<\frac{p+3}{p-1},$ all the solutions are oscillatory and stable;
		\item if $\theta\geq \frac{p+3}{p-1},$	all the solutions are positive, monotone decreasing and stable.
	\end{itemize}
\end{itemize}
\end{theorem}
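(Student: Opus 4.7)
The plan is to split by the sign of $\Lambda$ and, when $\Lambda>0$, by the size of $\theta$ relative to the threshold $(p+3)/(p-1)$. The key tool is the Pohozaev-type identity of Lemma \ref{Lemma:Pohozaev}, whose bracket coefficient $K:=\frac{\theta-1}{2}-\frac{\theta+1}{p+1}=\frac{\theta(p-1)-(p+3)}{2(p+1)}$ changes sign exactly at the Sobolev-type threshold $\theta=(p+3)/(p-1)$; this dichotomy governs the whole qualitative picture.

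\textbf{Case $\Lambda<0$.} I would rewrite the equation as $w''+(\theta/r)w'=|\Lambda||w|^{p-1}w$, which is problem \eqref{Eq:GeneralIVPPositive} with $q(r)=\theta/r$ and $f(t)=|\Lambda||t|^{p-1}t$. For $\theta>0$, these choices satisfy \eqref{Hyp:q1}--\eqref{Hyp:q4}, \eqref{Hyp:f1}--\eqref{Hyp:f3} (the integrability in \eqref{Hyp:f3} holds since $(p+1)/2>1$), and $\rho(r)=r^{\theta}$ satisfies \eqref{Hyp:rho}; Lemma \ref{Lemma:ww' Positive} and Proposition \ref{Prop:Blow-Up} then give positivity, strict monotonicity, and finite-time blow-up. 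For $\theta=0$ the conclusion follows from direct conservation of $\hat{E}(r)=(w')^2/2-|\Lambda||w|^{p+1}/(p+1)$: since $\hat{E}(0)<0$, one has $w\geq w(0)$ and $w'=\sqrt{2|\Lambda|(w^{p+1}-w(0)^{p+1})/(p+1)}$, so separation of variables and the convergence of $\int^{\infty}s^{-(p+1)/2}\,ds$ produce the blow-up.

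\textbf{Case $\Lambda>0$.} The energy $E(r)=(w')^2/2+\Lambda|w|^{p+1}/(p+1)$ satisfies $E'(r)=-(\theta/r)(w')^2\leq 0$, yielding uniform bounds on $w$ and $w'$, hence global existence. When $\theta=0$, $E$ is conserved and the phase-plane orbit is a closed curve enclosing the origin, so $w$ is periodic, oscillatory, and not stable. When $\theta\geq (p+3)/(p-1)$ (so $K\geq 0$), I would argue by contradiction: if $r_1>0$ were the first zero of $w$, then $(r^{\theta}w')'<0$ on $[0,r_1)$ and $(r^{\theta}w')(0)=0$ force $w'<0$ on $(0,r_1]$ and $w'(r_1)\neq 0$ by uniqueness, so Pohozaev at $r_1$ reads
\[
-\tfrac{r_1^{\theta+1}}{2}(w'(r_1))^2=\Lambda K\int_0^{r_1}s^{\theta}w^{p+1}\,ds,
\]
with the left side strictly negative and the right side non-negative, a contradiction. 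Hence $w>0$ everywhere, strictly decreasing, and the representation $w'(r)=-\Lambda r^{-\theta}\int_0^{r}s^{\theta}w^{p}\,ds$ forbids a positive limit (which would force $|w'|\to\infty$), so $w\to 0$ and the solution is stable.

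\textbf{Main obstacle: subcase $0<\theta<(p+3)/(p-1)$.} Here $K<0$ and the previous Pohozaev contradiction reverses. For oscillation I would argue by contradiction: if $w$ is eventually of one sign, say $w>0$ on $[R,\infty)$, then monotonicity of $r^{\theta}w'$ and the energy bound force $w(r)\to 0$ and $w'(r)\to 0$; the Emden--Fowler rescaling $v(s)=e^{2s/(p-1)}w(e^{s})$ converts the equation into an autonomous system with damping $\beta=(\theta(p-1)-(p+3))/(p-1)<0$, from which one derives the Fowler decay $w(r)=O(r^{-2/(p-1)})$; this rate makes the boundary terms $r^{\theta+1}E(r)$ and $r^{\theta}w(r)w'(r)$ in Pohozaev vanish as $r\to\infty$, and passing to the limit forces $\Lambda K\int_0^{\infty}s^{\theta}w^{p+1}\,ds=0$, contradicting $K\neq 0$ and $w\not\equiv 0$. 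For stability, the sequence of amplitudes $|w(r_n^{*})|$ at consecutive local extrema is non-increasing because $E(r_n^{*})=\Lambda|w(r_n^{*})|^{p+1}/(p+1)$ and $E$ decreases; the dissipation identity $E(0)-\lim E=\int_0^{\infty}(\theta/r)(w')^2\,dr<\infty$ together with a uniform lower bound on the period between consecutive zeros (inherited from the autonomous rescaled system) then forces the limit amplitude to vanish. The most delicate step is the control of the Fowler decay rate and the period lower bound, needed to couple oscillation to dissipation.
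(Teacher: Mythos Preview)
Your argument coincides with the paper's for $\Lambda<0$ (both invoke Proposition~\ref{Prop:Blow-Up}), for global existence and boundedness when $\Lambda>0$ (both use the energy), and for positivity when $\theta\geq (p+3)/(p-1)$ (both evaluate the Pohozaev identity at a hypothetical first zero). For $\theta=0$ your phase-plane argument is in fact more self-contained than the paper's, which still appeals to an external oscillation result; and for stability in the supercritical range you give a direct argument where the paper simply cites \cite{WiWo}.

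The divergence is entirely in the subcase $0<\theta<(p+3)/(p-1)$. The paper does not prove oscillation or stability here: it quotes \cite[Theorem~3.3]{FePe} for oscillation and \cite[Corollary~2.1]{WiWo} for stability. You instead sketch a self-contained route via the Fowler rescaling and Pohozaev at infinity. The structure is reasonable, but the step you flag as ``most delicate'' is a genuine gap, not just a technicality: the claimed bound $w(r)=O(r^{-2/(p-1)})$ (equivalently, $v$ bounded) does \emph{not} follow from the anti-damped autonomous system. Indeed, for $0<\theta<(p+1)/(p-1)$ one has $\gamma:=\alpha(\alpha+1-\theta)>0$, the only equilibrium of the $v$-equation is the origin, and since the autonomous energy $\mathcal{E}(s)=(v')^2/2+\gamma v^2/2+\Lambda v^{p+1}/(p+1)$ is non-decreasing with $\mathcal{E}(S)>0$, a bounded positive $v$ would be forced (via $v'\in L^2$, $v'\to 0$, and Poincar\'{e}--Bendixson) to converge to the origin, contradicting $\mathcal{E}\geq\mathcal{E}(S)>0$. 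Hence $v$ is unbounded in this range, the decay $w=O(r^{-2/(p-1)})$ fails, and you have not shown that the boundary terms $r^{\theta+1}E_w(r)$ and $r^{\theta}w\,w'$ in Lemma~\ref{Lemma:Pohozaev} vanish at infinity; the contradiction does not close. Your stability argument has a parallel issue: a uniform period lower bound in the $s$-variable does not yield a lower bound on the dissipation $\int(\theta/r)(w')^2\,dr$ per cycle, because the weight $\theta/r$ decays.

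Either supply a genuine phase-plane analysis of the anti-damped system controlling the growth of $v$ (this is where the real work lies and is essentially what \cite{FePe} does), or follow the paper and cite the classical results \cite{FePe,WiWo} for this subcase.
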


\begin{proof}
The case of $\Lambda<0$ follows from Proposition \ref{Prop:Blow-Up}, for $\theta\neq0$ and a slight modification of the proof of the same proposition yields the blow-up for $\theta=0$.

Now we focus in the case $\Lambda>0$. In this case, Proposition \ref{Prop:Global existence} gives the global existence and boundness of the solutions. By a well known result \cite[Corollary 2.1]{WiWo}, for every $\theta>0$, the solution is stable.
By Theorem 3.3 in \cite{FePe}, the solutions are oscillatory if $0\leq \theta<\frac{p+3}{p-1}$. 

Now we take $\theta\geq \frac{p+3}{p-1}$, or equivalently $\frac{\theta-1}{2}\geq \frac{\theta+1}{p+1}$. Suppose, in order to get a contradiction that the solution $w$ have a zero $r_0>0$. By uniqueness of the solutions, $w'(r_0)\neq0$, otherwise $w\equiv 0$, which is a contradiction to the fact that $w(0)>0$. Evaluating the identity in Lemma \ref{Lemma:Pohozaev} at $r=r_0$, we conclude that
\[
0>-\frac{r_0^{\theta+1}}{2}\vert w'(r_0)\vert^2 =\Lambda\left[\frac{\theta-1}{2}-\frac{\theta+1}{p+1}\right]\int_0^{r_0} s^{\theta}\vert w(s)\vert^{p+1}ds\geq 0,
\]
a contradiction since $\Lambda>0$ and $\frac{\theta-1}{2} - \frac{\theta+1}{p+1}\geq0$. Hence the solution must be positive if $\theta\geq \frac{p+3}{p-1}$. By the positivity of the solutions, a similar argument to the one given in the proof of Lemma \ref{Lemma:ww' Positive} yields that $w'<0$ for every $r>0$, so that $w$ is monotone decreasing. 

Finally, to prove that the solution is not stable when $\theta=0$, observe that the energy is constant in this situation, for $E'_w(r)=0$. Hence, if $w(r)\rightarrow 0$ as $r\rightarrow\infty$, then $0<E_w(0)=\lim_{r\rightarrow\infty}E(r)=\frac{1}{2}\lim_{r\rightarrow\infty}\vert w'(r)\vert$. Thus, there exists $r_\ast>0$ such that $\vert w'(r)\vert >0$ for every $r\geq r_\ast$ and the solution cannot oscillate, which is a contradiction. 
\end{proof}

\subsection{Qualitative properties of the solutions to the ODE's \eqref{Eq:MainReducedYamabeMinkowski} and \eqref{Eq:CartanMunznerEntireODE}, and proof of Theorem \eqref{Th:NodalDeSitterIsop}}\label{Sec:Proof 2}

We will apply all the theory developed in this section to show the existence of global and blowing-up solutions to equations \eqref{Eq:MainReducedYamabeMinkowski} and \eqref{Eq:CartanMunznerEntireODE} and to describe their qualitative behaviors.

\subsubsection{The case of the semi-Riemannian Euclidean space.}

As we have seen in Section \ref{Section:ReductionMinkowski} and in Appendix \ref{App:Minkowski}, the isoparametric functions considered in Example \ref{Ex:IsoparametricMinkowsky} fall into one of the following four categories

\begin{itemize}
\item[(M1)] Im$\,\varphi=\R$ and $\varphi$ has no critical points,
\item[(M2)] Im$\,\varphi=[0,\infty)$ or $(-\infty,0]$, and $t=0$ is the unique critical value of $\varphi$,
\item[(M3)] Im$\,\varphi=\R$ and $t=0$ is the unique critical value of $\varphi$.
\end{itemize}

According to this, we have the following result for equation \eqref{Eq:MainReducedYamabeMinkowski}.

\begin{theorem}\label{Th:SolutionsMinkowskiODE}
Let $\varphi:\R^m_s\rightarrow\R$ be isoparametric of one of the above types. Then equation \eqref{Eq:MainReducedYamabeMinkowski} admit the following types of solutions
\begin{itemize}[leftmargin=*]
	\item For $\varphi$ of type (M1),
		\begin{itemize}
			\item[(M1.1)] If $A=0$, $\mu\neq0$ and $p>1$, or if $A\neq0$ with $\beta=0$, $\mu>0$ and $p>1$, the solution is globally defined in $\R$, is bounded, oscillatory and not stable.
			\item[(M1.2)] If $A=0$, $\mu\neq 0$ and $p>1$ or if $A\neq0$ with $\beta=0$, $\mu<0$ and $p>1$, the solution is defined in some interval of the form $(-R,R)$ with $0<R<\infty$, is even, monotone in $(0,R)$ and blows-up in finite as $t\rightarrow\pm R$, with a global minimum at $t=0$.
		\end{itemize}
		\item For $\varphi$ of type (M2) with $\delta=0$ and Im$\,\varphi=[0,\infty)$, 
		\begin{itemize}
			\item[(M2.1)] If $\mu\gamma>0$, $\frac{2\beta}{\gamma}=1$ and $p>1$, then the solution is globally defined in $[0,\infty)$, is bounded, oscillatory and not stable as $t\rightarrow \infty$.
			\item[(M2.2)] If $\mu\gamma>0$  and $p<\frac{\beta+\gamma}{\beta-\gamma}\leq\infty$, the solution is globally defined in $[0,\infty)$, is bounded, oscillatory and stable as $t\rightarrow\infty$, with a global maximum at $t=0$.
			\item[(M2.3)] If $\mu\gamma>0$ and $p\geq\frac{\beta+\gamma}{\beta-\gamma}$, the solution is globally defined in $[0,\infty)$, is bounded, positive, monotone decreasing and stable as $t\rightarrow\infty$, being $t=0$ the unique critical point and it is a global maximum..
			\item[(M2.4)] If $\mu\gamma<0$ and $p>1$, then the solution is defined for in some interval of the form $[0,R)$ with $0<R<\infty$, is monotone increasing and blows-up as $t\rightarrow R$,  being $t=0$ the unique critical point and it is a global minimum.
		\end{itemize}
	The same behavior holds true if Im$\,\varphi=(-\infty,0]$, reversing the inequalities for $\mu\gamma$ and taking $t\rightarrow-\infty$ instead of $t\rightarrow\infty$ in (M2.1)-(M2.4).
\item For $\varphi$ of type (M3)  with $\delta=0$, 
		\begin{itemize}
			\item[(M3.1)] If  $\mu\gamma<0$ and $p<\frac{\beta+\gamma}{\beta-\gamma}$, then the solution is defined in $(-\infty,R)$ for some $R>0$; in $(-\infty,0]$ is bounded, oscillatory and stable as $t\rightarrow-\infty$, while in $(0,R)$ is monotone increasing and blows-up as $t\rightarrow R$.
			\item[(M3.2)] If $\mu\gamma<0$ and $p\geq\frac{\beta+\gamma}{\beta-\gamma}$, then the solution is positive and defined in $(-\infty,R)$ for some $R>0$; in $(-\infty,0]$ is bounded, monotone increasing and stable as $t\rightarrow-\infty$, while in $(0,R)$ is monotone increasing and blows-up as $t\rightarrow R$, being $t=0$ the unique critical point and it is an inflection point.
		\end{itemize}
	A similar behavior is true if we reverse the inequality for $\mu\gamma$ and change $(-\infty,R)$, $(-\infty,0]$ and $(0,R)$ by $(R,\infty)$, $[0,\infty)$ and $(-R,0)$, respectively, in statements (M3.1) and (M3.2). 	
\end{itemize}
\end{theorem}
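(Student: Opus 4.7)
The plan is to reduce each of the three configurations (M1)--(M3) to the generalized Emden-Fowler problem \eqref{Eq:GeneralEmdenFowlerMinkowski}, which was analyzed in Theorem \ref{Th:EmdenFowlerMinkowski}, via an explicit change of variable in $t$. In every case, the natural initial data at the singularity of \eqref{Eq:MainReducedYamabeMinkowski} is $v(0) = d > 0$, $v'(0) = 0$, which transcribes correctly to $w(0) > 0$, $w'(0) = 0$ after the substitution. For case (M1), both subclasses ($A = 0$; and $A \neq 0$ with $\beta = 0$) yield the autonomous ODE $v'' = -\frac{\mu}{\delta}|v|^{p-1}v$ on $\R$, which is invariant under the reflection $t \mapsto -t$. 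Thus the solution, prescribed at $t=0$, is even, and its restriction to $[0,\infty)$ is exactly \eqref{Eq:GeneralEmdenFowlerMinkowski} with $\theta = 0$ and $\Lambda = \mu/\delta$. Theorem \ref{Th:EmdenFowlerMinkowski} then produces (M1.1) when $\Lambda > 0$ and (M1.2) when $\Lambda < 0$, the latter acquiring a global minimum at $t = 0$ after reflection.

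For case (M2) with $\mathrm{Im}\,\varphi = [0,\infty)$ and $\delta = 0$, the equation $\gamma t\, v'' + \beta v' = -\mu|v|^{p-1}v$ is transformed, via $t = r^2$ and $w(r) := v(r^2)$, into \eqref{Eq:GeneralEmdenFowlerMinkowski} on $[0,\infty)$ with
\[
\theta = \frac{2\beta}{\gamma} - 1, \qquad \Lambda = \frac{4\mu}{\gamma}.
\]
A short algebraic check (setting $x = \beta/\gamma$ and rearranging $x(p-1) < p+1$) shows that $\theta < (p+3)/(p-1)$ is equivalent to $p < (\beta+\gamma)/(\beta-\gamma)$, while $\theta = 0$ is equivalent to $2\beta = \gamma$. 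Consequently the four subcases (M2.1)--(M2.4) correspond bijectively to the three $\Lambda > 0$ regimes of Theorem \ref{Th:EmdenFowlerMinkowski}, together with the $\Lambda < 0$ blow-up. The mirror case $\mathrm{Im}\,\varphi = (-\infty, 0]$ is treated analogously through $t = -r^2$, which reverses the sign of $\Lambda$, and hence of $\mu\gamma$.

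For case (M3), where $\mathrm{Im}\,\varphi = \R$ with a unique critical value at $t = 0$, I would solve the equation separately on $[0,\infty)$ and on $(-\infty,0]$ with the common initial data $v(0) = d$, $v'(0) = 0$, and glue the two branches at $t = 0$. The substitutions $t = r^2$ on the right and $t = -r^2$ on the left reduce both halves to \eqref{Eq:GeneralEmdenFowlerMinkowski} with opposite signs $\Lambda = 4\mu/\gamma$ and $\Lambda' = -4\mu/\gamma$. In the regime $\mu\gamma < 0$, one side blows up in finite time by Theorem \ref{Th:EmdenFowlerMinkowski}, while the other is global; depending on whether $\theta < (p+3)/(p-1)$ or $\theta \geq (p+3)/(p-1)$, the global side is either bounded, oscillatory, stable (yielding M3.1) or positive, monotone, stable (yielding M3.2). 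The symmetric regime $\mu\gamma > 0$ is obtained by reversing the roles of the two sides.

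The main obstacle I anticipate is the careful bookkeeping of the three changes of variable together with their sign effects: one must verify that oscillation, stability, monotonicity, and the location and type of extrema in the $r$-variable translate faithfully into the corresponding features in the $t$-variable on each subinterval of $\mathrm{Im}\,\varphi$. A secondary subtlety is ensuring that the glued solution in (M3) is well defined at the singular point $t = 0$ of the ODE; this follows from the local existence and uniqueness result of Appendix \ref{App:ExistenceUniquenessSingular} applied on each side separately, together with the fact that both sides share the initial data $v(0) = d$ and $v'(0) = 0$.
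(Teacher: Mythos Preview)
Your proposal is correct and follows essentially the same route as the paper: reduce each case (M1)--(M3) to the Emden--Fowler problem \eqref{Eq:GeneralEmdenFowlerMinkowski} via the substitutions $t=\pm r^2$ (or the trivial one for $\theta=0$), read off the behaviour from Theorem \ref{Th:EmdenFowlerMinkowski}, and in the two-sided case (M3) glue the branches at $t=0$ using the uniqueness result of Appendix \ref{App:ExistenceUniquenessSingular}. The paper writes out only the (M3.1) case explicitly, with the same values $\theta=2\beta/\gamma-1$ and $\Lambda=\pm 4\mu/\gamma$ and the same equivalence $\theta<\frac{p+3}{p-1}\Leftrightarrow p<\frac{\beta+\gamma}{\beta-\gamma}$ that you derive; the remaining cases are left as analogous, exactly as you handle them.
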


\begin{proof}
The proof is an easy consequence of Theorem \ref{Th:EmdenFowlerMinkowski}, but, for the reader convenience, we make the proof for one of the above cases. For instance, we next show the existence of a solution of type (M3.1) when $\varphi$ is of type (M3), $\mu\gamma<0$ and $p<\frac{\beta+\gamma}{\beta-\gamma}$.  In this situation,  $2\beta=\gamma\text{tr}A=\gamma(k+n)$, $\delta=0$ and \eqref{Eq:MainReducedYamabeMinkowski} is defined in $\R$. By the analysis made in Section \ref{Section:ReductionMinkowski}, a solution to \eqref{Eq:MainReducedYamabeMinkowski} with initial conditions $v(0)=d>0$ and $v'(0)$ is given by 
\[
v(t)=\left\{\begin{tabular}{cc}
$w_{\Lambda_1}(\sqrt{t})$ & if $t\geq0$\\
$w_{\Lambda_2}(\sqrt{-t})$ & if $t\leq0$
\end{tabular}\right.
\]
where $w_{\Lambda_1}$ is a solution to problem \eqref{Eq:GeneralEmdenFowlerMinkowski} with $\theta=k+n-1$, $\Lambda_1=\frac{4\mu}{\gamma}<0$ and initial conditions $w_{\Lambda_1}(0)=d$ and $w'_{\Lambda_1}(0)=0$, while $w_{\Lambda_2}$ is a solution to problem \eqref{Eq:GeneralEmdenFowlerMinkowski} with $\theta=k+n-1$, $\Lambda_2=-\frac{4\mu}{\gamma}>0$ and initial conditions $w_{\Lambda_2}(0)=d$ and $w'_{\Lambda_2}(0)=0$. As $v(0)=w_{\Lambda_1}(0)=w_{\Lambda_2}(0)=d>0$ and $v'(0)=w'_{\Lambda_1}(0)=w'_{\Lambda_2}(0)=0$, by existence and uniqueness of the solutions, $v$ is a well defined solution to \eqref{Eq:MainReducedYamabeMinkowski}. Now, on the one hand, as $\Lambda_1<0$, by Theorem \ref{Th:EmdenFowlerMinkowski}, $w_{\Lambda_1}$ blows up in finite time $R'>0$, is positive and monotone increasing in $(0,R)$. On the other hand, taking $\theta=k+n-1$, inequality $p<\frac{\beta+\gamma}{\beta - \gamma}=\frac{(k+n)+2}{(k+n)-2}=\frac{\theta+3}{\theta-1}$ is equivalent to $\theta<\frac{p+3}{p-1}$; hence Theorem \ref{Th:EmdenFowlerMinkowski} yields that $w_{\Lambda_2}$ is globally defined, bounded, oscillates and is stable as $r=\sqrt{-t}\rightarrow\infty$. Thus, $v$ blows up as $t\rightarrow R:=R'^2$, is positive and monotone increasing in $(0,R)$, while it is defined for every $t\leq0$, is bounded, oscillates in $(-\infty,0)$ and it is stable as $t\rightarrow-\infty$, which are the characteristics of the solution (M3.1). 

Similar arguments yield the existence of the other type of solutions.
\end{proof}

The main theorems for the Yamabe equation on the Minkowski space follow easily. 

\begin{proof}[Proof of Theorem \ref{Th:MinkowskiIsop}]
	It follows from Proposition \ref{Prop:ReductionODE} and Theorem \ref{Th:SolutionsMinkowskiODE}.
\end{proof}

\begin{proof}[Proof of Theorem \ref{Th:GeometrySolutions}]
This follow directly from Theorem \ref{Th:SolutionsMinkowskiODE} and the analysis in Appendix \ref{App:Minkowski}, because $u^{-1}(c)=\varphi^{-1}(v^{-1}(c))$ and $\nabla u=v'(\varphi)\nabla\varphi=0$ if and only if $\nabla \varphi=0$ or $v'(\varphi)=0$. 
\end{proof}


\subsubsection{The case of the pseudosphere.}

Now we turn our attention to equation \eqref{Eq:CartanMunznerEntireODE}. We will consider isoparametric functions given by the restrictions of semi-Riemannian Cartan-M\"{u}nzner polynomials. As it was mentioned in Section \ref{Section:ReductionDeSitter}, the possible images of such functions fall into one of the following categories
\begin{itemize}
	\item[(P1)] Im$\,\varphi=\R$;
	\item[(P2)] Im$\,\varphi=[-1,\infty)$ or Im$\,\varphi=(-\infty,1]$;
	\item[(P3)] Im$\,\varphi=[1,\infty)$ or Im$\,\varphi=(-\infty,-1]$.
\end{itemize}
In all these cases, the only critical values of $\varphi$ are $t=\pm1$. In Appendix \ref{App:DeSitter} we see that the functions in Example \ref{Ex:IsoparametricDeSitter} may have all the possible images in (P1)-(P3).  Recall the definition of $n_1,n_2$ and $\kappa$ in Theorems \ref{Th:NodalDeSitterIsop} and \ref{Th: k zeroes}. According to this, we have the following existence and multiplicity result for equation \eqref{Eq:CartanMunznerEntireODE}.

\begin{theorem}\label{Th:SolutionsDeSitterEntireODE}
Let $\varphi:\S_s^m\rightarrow\R$ be an isoparametric function given by the restriction of a Cartan-M\"{u}nzer polynomial of type (P1), (P2) or (P3), and let $k\geq 1$ be a positive integer. Suppose, in case of (P2) and (P3), that the images are $[-1,\infty)$ and $[1,\infty)$ respectively. If $p<\frac{(m-\kappa)+2}{(m-\kappa)-2}$, then equation \eqref{Eq:CartanMunznerEntireODE}, with $v(1)>1$ and the natural boundary condition \eqref{Eq:NaturalBoundaryConditions} at $t=1$, admits the following two types of blowing up solutions.
\begin{itemize}
	\item[(P1.k)] $v$ is sign-changing and defined in $[-1,R_+)$ for some $R_+>1$, has exactly $k$ zeroes, all in $(-1,1)$, $v$ is positive, monotone increasing in $(0,R_+)$ and $v(t)\rightarrow\infty$ as $t\rightarrow R_+$.
	\item[(P2.k)] $v$ is sign-changing and defined in $(-R_-,R_+)$ for some $R_-,R_+>1$, has exactly $k$ zeroes, all in $(-1,1)$, $v$ is positive, monotone increasing in $(1,R_+)$ and $v(t)\rightarrow\infty$ as $t\rightarrow R_+$, while $v$ is negative, monotone in $(-R_-,-1)$ and $v(t)\rightarrow-\infty$ as $t\rightarrow -R_-$ if $k$ is odd, or it is positive, monotone and $v(t)\rightarrow\infty$ as $t\rightarrow-R_-$ if $k$ is even.
	\item[(P3.1)] $v$ is positive, defined in $[1,R_+)$ for some $R_+>1$, monotone increasing with a global minimum at $t=1$ and $v(t)\rightarrow\infty$ as $t\rightarrow R_+$.
\end{itemize}
A similar result holds true if we consider the image of (P2) and (P3) to be $(-\infty,1]$ and $(-\infty,-1]$, respectively, taking the initial condition at $t=-1$ instead that $t=1$ for (P3).
\end{theorem}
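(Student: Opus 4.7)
The strategy is to decompose the domain into regions separated by the singular points $t=\pm 1$ of equation \eqref{Eq:CartanMunznerEntireODE}, apply an existing theorem on each region, and glue the resulting pieces via the natural boundary condition \eqref{Eq:NaturalBoundaryConditions}. Concretely, on $[-1,1]$ I will invoke Theorem \ref{Th: k zeroes}, on $[1,\infty)$ and $(-\infty,-1]$ I will invoke Proposition \ref{Prop:Blow-Up} applied to equation \eqref{Eq:PositiveODE}, and then match the three pieces at $\pm 1$.

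For the compact piece, the subcriticality hypothesis $p<\tfrac{(m-\kappa)+2}{(m-\kappa)-2}$ coincides with the hypothesis $p<\tfrac{m-\kappa+1}{m-\kappa-1}$ of Theorem \ref{Th: k zeroes} under the identification $\tfrac{n_1+n_2}{2}=\tfrac{(m-1)(\ell-1)}{\ell}$, so for each $k\geq 1$ and $\lambda$ sufficiently large Theorem \ref{Th: k zeroes} produces $w_k$ on $[0,\pi]$ with exactly $k$ zeroes, $w_k(0)=d_k>1$, and $w_k(\pi)>1$ when $k$ is even or $w_k(\pi)<-1$ when $k$ is odd. Reversing $t=\cos r$, the function $v_0(t):=w_k(\arccos t)$ solves \eqref{Eq:CartanMunznerEntireODE} on $[-1,1]$, has exactly $k$ zeroes in $(-1,1)$, and satisfies $|v_0(\pm 1)|>1$ together with the natural boundary condition at both endpoints.

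To extend $v_0$ to the right of $t=1$, I apply the change of variables $t=\cosh r$, which (as derived in Section \ref{Section:ReductionDeSitter}) transforms \eqref{Eq:CartanMunznerEntireODE} into \eqref{Eq:PositiveODE} with $q(r)=h_+(r)/\sinh r$ and $f(w)=\Lambda(|w|^{p-1}w-w)$, where $\Lambda=\lambda/\ell^2>0$. As noted in Section \ref{Sec:GeneralEmdenFowler}, this $q$ satisfies \eqref{Hyp:q1}--\eqref{Hyp:q4} and its integrating factor satisfies \eqref{Hyp:rho}, while $f$ satisfies \eqref{Hyp:f1}--\eqref{Hyp:f3} with $t_0=1$. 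Setting $w(0)=v_0(1)=d_k>1=t_0$, Appendix \ref{App:ExistenceUniquenessSingular} provides a unique local solution with $w'(0)=0$, and Proposition \ref{Prop:Blow-Up} then gives a maximal interval $[0,R_+')$ with $R_+'<\infty$, on which $w$ is monotone increasing and $w(r)\to\infty$. Undoing the change of variables yields a right-extension $v_+$ on $[1,R_+)$ with the required qualitative properties. The left-extension uses $t=-\cosh r$ and $h_-/\sinh r$ instead; when $w_k(\pi)>1$ we apply Proposition \ref{Prop:Blow-Up} directly, and when $w_k(\pi)<-1$ we exploit the oddness of $f$ to apply the proposition to $-v$ and then negate. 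For case (P3) with $v(1)>1$ arbitrary, only the single right-extension step is needed.

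Finally, the three pieces must be assembled into one $C^2$ solution. Here the key observation is that any $C^2$ solution of \eqref{Eq:CartanMunznerEntireODE} on a neighborhood of $\pm 1$ automatically satisfies the natural boundary condition, because setting $1-t^2=0$ in the equation eliminates the $v''$ term; conversely, both changes of variables $t=\cos r$ on $[-1,1]$ and $t=\cosh(\pm r)$ on the unbounded sides impose $w'(0)=0$, which in turn determines $v'(\pm 1)$ via the same natural boundary relation. Therefore $v_0$ and $v_\pm$ share values and one-sided derivatives at $\pm 1$, and local uniqueness at the singular points (Appendix \ref{App:ExistenceUniquenessSingular}) upgrades this to a $C^2$ match, producing a single solution on $[-1,R_+)$ or $(-R_-,R_+)$ with the stated properties. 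The main obstacle is precisely this matching step: the singular change of variables makes pointwise continuity of $v'$ delicate, and one has to verify that the local existence theory at the singular points really forces the compact-interval solution and the exterior blow-up solution to glue into a single classical solution of the original equation, rather than two solutions that merely agree in value at $\pm 1$.
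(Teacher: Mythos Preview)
Your proposal is correct and follows essentially the same route as the paper: decompose at the singular points $t=\pm1$, obtain the compact piece via Theorem~\ref{Th: k zeroes} through $t=\cos r$, the unbounded pieces via Proposition~\ref{Prop:Blow-Up} through $t=\pm\cosh r$ (using oddness of $f$ when $w_k(\pi)<-1$), and glue by the local uniqueness result of Appendix~\ref{App:ExistenceUniquenessSingular}. One small slip: the two subcriticality exponents $\tfrac{(m-\kappa)+2}{(m-\kappa)-2}$ and $\tfrac{m-\kappa+1}{m-\kappa-1}$ are not literally equal under the stated identification, but the paper invokes Theorem~\ref{Th: k zeroes} in exactly the same way, so your argument tracks the paper's proof.
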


\begin{proof}
We just show the existence of type (P2.k)  solutions for $k$ odd, being the other similar and simpler. In this case we are considering that Im$\,\varphi=\R$ and that the only critical values of $\varphi$ are $t=\pm1$. 

First, as the function $f(t)=\vert t\vert^{p-1}t-t$ is locally Lipschitz continuous, a standard contraction mapping argument yields the existence and uniqueness of local solutions to the problem \eqref{Eq:CartanMunznerEntireODE} with initial conditions $v(t_0),v'(t_0)\in\R$ for $t_0\neq\pm 1$, and natural initial conditions $v(\pm1)\in\R$ and
\begin{equation}\label{Eq:NaturalInitialCondition}
v'(\pm1)= -\frac{\lambda\left[ \vert v(\pm1)\vert^{p-1}v(\pm1)-v(\pm1)\right]}{\ell^2(-\frac{m-1+\ell}{\ell}(\pm 1)+\beta)}
\end{equation} 
for $t_0=\pm 1$ (see Appendix \ref{App:ExistenceUniquenessSingular}).

Now, as it was shown in Section \ref{Section:ReductionDeSitter}, solving problem \eqref{Eq:CartanMunznerEntireODE} in $\R$ reduces to solving it separately in $(-\infty,-1]$, $[-1,1]$ and $[1,\infty)$ with suitable boundary conditions so that the solutions can be glued together using the local existence and uniqueness to the problem. Take $d:=v(1)>1$ and the natural initial condition $v(1)$ given by \eqref{Eq:NaturalInitialCondition}. By the subcritical condition for $p$ and Theorem \ref{Th: k zeroes}, there exists a  solution $w_k$ to problem \eqref{Eq:Compact} with initial conditions $w_k(0)=d>0$, $w'(0)=0$ and having exactly $k$ zeroes in $(0,\pi)$. On the other hand, Proposition \ref{Prop:Blow-Up} gives a blowing up solution $w_+$ to the problem \eqref{Eq:PositiveODE} with initial conditions $w_+(0)=d$ and $w_+'(0)=0$, which is positive, monotone increasing and blows up as $r\rightarrow R'_+$ for some $R_+'>0$.
As $k$ is odd by hypothesis,  $-c:=w_k(\pi)<-1$ and $w'(\pi)=0$ for some $c>1$ by Theorem \ref{Th: k zeroes}. Let $w_-$ be a solution to problem \eqref{Eq:PositiveODE} with $h_-$ and initial conditions $w_-(0)=c>1$ and $w_-'(0)=0$. By Proposition \ref{Prop:Blow-Up}, this solution is positive, monotone increasing and blows up as $r\rightarrow R_-'$ for some $R_->0$. Notice that $-w_-$ is also a solution, but now with $-w_-(0)=-c=w_k(\pi)$, and it is negative, monotone and blows up as $r\rightarrow R_-'$.

Hence, by the reductions made in \ref{Section:ReductionDeSitter}, defining $R_\pm:=\text{arccosh}(R_\pm')$, the function 
\[
v(t):=\left\{\begin{tabular}{cc}
$-w_-(\text{arccosh}\,(-t)),$ & if $-R_-<t\leq -1$\\
$w_k(\arccos t),$ & if $-1\leq t \leq 1$\\
$w_+(\text{arccosh}\, t),$ & if $1\leq t < R_+$
\end{tabular}\right.
\] 
is a well defined solution to \eqref{Eq:CartanMunznerEntireODE} in $(-R_,R_+)$ having the desired properties. 
\end{proof}

The main result for the Yamabe type problem on the pseudosphere is a simple consequence.

\begin{proof}[Proof of Theorem \ref{Th:NodalDeSitterIsop}]
The proof follows from Proposition \ref{Prop:ReductionODE} together with Theorem \ref{Th:SolutionsDeSitterEntireODE}.
\end{proof}

We make a final remark about the solutions to the Yamabe type equation on the pseudosphere. In Theorem \ref{Th:SolutionsDeSitterEntireODE}, we did not obtain globally defined solutions to the Yamabe-type problem \eqref{Eq:MainYamabeSphereNorm}. This leads us to the following

\begin{question}
Does equation \eqref{Eq:CartanMunznerEntireODE} admit global solutions different from $v\equiv 0,1$? If this is the case, are there positive solutions?
\end{question}

As we have seen in Theorem \ref{Th:SolutionsDeSitterEntireODE}, this is equivalent to study the global behavior of the solutions to equation \eqref{Eq:PositiveODE} with initial conditions $w(0)\in(0,1)$ and $w'(0)=0$ and to prove the existence of solutions to \eqref{Eq:Compact} having initial and final values $\vert v(\pm 1)\vert <1$. In case of the equation \eqref{Eq:PositiveODE}, neither Proposition \ref{Prop:Global existence} nor \ref{Prop:Blow-Up} apply to prove the global existence or the blow-up of the solutions. Concerning equation \eqref{Eq:Compact}, as it was mentioned immediately after Theorem \ref{Th: k zeroes}, it is not clear what the initial and final conditions for the positive solutions to this problem are; if they are both bigger that one, our result would yield the existence of positive blowing up solutions. However, if both lie in $(0,1)$, it may be possible that a globally defined solution to \eqref{Eq:CartanMunznerEntireODE} exists.


\appendix

\section{Some global isoparametric functions in the semi-Riemannian Euclidean space form}\label{App:Minkowski}

Let $\varphi:\R^m_s\rightarrow\R$ be the isoparametric function given by $\varphi(z) =\langle Az, z \rangle + 2\langle a, z \rangle$ where $A\in$Sym$(\R^m_s)$ and $a\in\R^m_s$ and $\alpha\in\R$ satisfy that $(A-\alpha I)A=0$ and $Aa=\alpha a$. We will study the level sets of these functions according to the following three cases: $A=0$, $A\neq0$ with tr$A\neq0$ and $A\neq0$ with tr$A=0$. In the second case we will suppose, for simplicity, that $a=0$ and in the third case we will give an explicit example generalizing the one given in \cite{Ha}, but other possibilities could occur in both cases.

\subsection{Case $A=0$.} The isoparametric function is just $\varphi(z) = 2\langle a, z \rangle$ and is not constant if and only if $a\neq 0$. Here Im$\varphi=\R$, every $c\in\R$ is a regular value and 
\[
\varphi^{-1}(c)=\mathcal{L}^{m-1}_a(c/2):=\{z\in\R^m \;:\; \langle a,z \rangle = c/2 \},
\]
i.e., the isoparametric hypersurfaces are parallel hyperplanes.

\subsection{Case $A\neq0$, $a=0$ and $\alpha\neq 0$.} The conditions on $A$ imply that the minimal polynomial of $A$ is $t, t-\alpha$ or $(t-\alpha)t$, where we obtain that $A$ is diagonalizable with proper values $\alpha\neq 0$ and $0$, and proper vector $a$ associated to $\alpha$. Hence, we can suppose, without loss of generality, that $A$ has the form
\[
A=\text{diag}(
\overbrace{\alpha,\dots,\alpha}^k,
\overbrace{0,\dots,0}^{s-k},
\overbrace{\alpha,\dots,\alpha}^n,
\overbrace{0,\dots,0}^{m-s-n}
);
\]
where $0\leq k\leq s$ and $0\leq n\leq m-s$. As $A\neq 0$, we have that $k\neq 0$ or $n\neq 0$.
Hence $\varphi(\bar{t},\bar{x})=\alpha\left[-\sum_{i=1}^kt_i^2 + \sum_{j=1}^n x_j^2\right]$ and
\[
\text{Im}\varphi=
\left\{\begin{tabular}{cc}
$(-\infty,0]$ & if $k=0$, $n\neq0$ and $\alpha<0$, or if $n=0$, $k\neq0$ and $\alpha>0$,\\
$[0,\infty)$ & if $k=0$, $n\neq0$ and $\alpha>0$, or if $n=0$, $k\neq0$ and $\alpha<0$,\\
$\R$ & if $k\neq0$ and $n\neq 0$.
\end{tabular}\right.
\]
and the only critical value of $\varphi$ is $t=0$.
Therefore, the possible level sets are given as follows:
\begin{itemize}
	\item If $k=0$ and $n\neq 0$,

	\[
	\varphi^{-1}(c)=\left\{\begin{tabular}{cc}
	$\emptyset$   &   if $\frac{c}{\alpha}<0$\\
	$\{0\}\times\R_s^{m-n}$   &   if $c=0$\\
	$\S^{n-1}(\frac{c}{a})\times\R^{m-n}_s$   &  if $\frac{c}{\alpha}>0$.
	\end{tabular}\right.
	\]
	
	\item If $n=0$ and $k\neq 0$,
	
	\[
	\varphi^{-1}(c)=\left\{\begin{tabular}{cc}
	$\emptyset$   &   if $\frac{c}{\alpha}>0$\\
	$\R_{s-k}^{m-k}\times\{0\}$   &   if $c=0$\\
	$\S^{k-1}(-\frac{c}{a})\times\R^{m-k}_{s-k}$   &  if $\frac{c}{\alpha}<0$.
	\end{tabular}\right.
	\]
	
	\item If $2\leq k+n<m$ and $k,n\neq0$,
	
	\[
	\varphi^{-1}(c)=\left\{\begin{tabular}{cc}
	$\S_k^{(k+n)-1}\times\R_{s-k}^{m-(k+n)}$   &   if $\frac{c}{\alpha}>0$\\
	$\mathcal{C}_k^{(k+n)-1}\times\R_{s-k}^{m-(k+n)}$   &   if $c=0$\\
	$\H_{k-1}^{(k+n)-1}(-\frac{c}{a})\times\R_{s-k}^{m-(k+n)}$   &  if $\frac{c}{\alpha}<0$.
	\end{tabular}\right.
	\]
	
	\item If $k+n=m$, i.e., if $k=s$ and $n=m-s$,

	\[
	\varphi^{-1}(c)=\left\{\begin{tabular}{cc}
	$\S_s^{m-1}$   &   if $\frac{c}{\alpha}>0$\\
	$\mathcal{C}_s^{(m)-1}$   &   if $c=0$\\
	$\H_{s-1}^{m-1}(-\frac{c}{a})$   &  if $\frac{c}{\alpha}<0$.
	\end{tabular}\right.
	\]
	
\end{itemize}

\subsection{Example of an isoparametric function with $A\neq0$, $A^2=0$ and $\alpha=0$.}

In order to simplify the ideas, let $s<m-s$, so that we can write the signs of the metric in $\R^{m}_s$ in pairs as follows
\[
(\underbrace{-,+,-,+,\dots,-,+}_{2s},\underbrace{+,\ldots,+}_{m-2s}) 
\]
that is, we write first $s$ pairs of $(-,+)$ signs and we let $m-2s$ ``$+$" signs at the end. Accordingly, we write $z\in\R_s^m$ as $z=(t_1,x_1,\ldots,t_s,x_s,x_{s+1},\ldots,x_{m-s})$. Observe that the symmetric operators in $\R^2_1$ must have the following matrix representation in orthonormal basis. 
\begin{equation*}
c
\begin{pmatrix}
-1 & \pm 1 \\
\mp 1 & 1
\end{pmatrix},
\end{equation*}
with $c\in\R$. 

Hence, we are led to consider only the matrices $A_+$ and $A_-$ defined by 
\begin{equation*}
A_+=
\begin{pmatrix}
-1 & 1 \\
-1 & 1
\end{pmatrix}
\quad\text{and}\quad
A_-=\begin{pmatrix}
-1 & -1 \\
1 & 1
\end{pmatrix}.
\end{equation*}
Observe that the eigenvectors of these operators are {\em null vectors}. 
For dimensions $m\geq 3$, consider the matrix
\begin{equation*}
A=
\begin{pmatrix}
A_{1} & 0 & \cdots & 0  & 0 \\
0 & A_{2} & \cdots & 0 & 0 \\
\vdots & \vdots & \ddots & \vdots & \vdots \\
0 & 0 & \cdots & A_{s}  &  0 \\
0 & 0 & \cdots & 0 & \bar{0}_{m-2s}
\end{pmatrix}
\end{equation*}
where $A_{i}$ is either $A_+$ or $A_-$ and $\bar{0}_{m-2s}$ is the zero matrix in dimension $m-2s$. This matrix belongs to Sym$(\R^m_s)$, $\alpha=0$ is the only eigenvalue and satisfies that $A^2=0$, tr$A=\sum_{i=1}^s$tr$A_{i_k}=0$ and that for any $1\leq\omega\leq m-2s$, the vector
\[
a=(\underbrace{0,\ldots,0}_{2s},\underbrace{1,1\ldots,1}_{\omega},\underbrace{0,\ldots,0}_{(m-2s)-\omega})
\]
is an eigenvector with eigenvalue $\alpha=0$. Let $\epsilon_i=+1$ if $A_i=A_+$ and $\epsilon_{i}=-1$ if $A_i=A_-$. Hence
\begin{equation*}
\langle Az,z \rangle = \sum_{i=1}^s (t_i-\epsilon_i x_i)^2
\end{equation*}

Then, for any $1\leq\omega\leq m-2s$, the function
\begin{equation*}
\varphi(z)=\langle Az,z\rangle + 2\langle a,z \rangle = \sum_{i=1}^s (t_i-\epsilon_i x_i)^2 + 2\sum_{j=s+1}^{\omega}x_j
\end{equation*}
is isoparametric, Im$\varphi=\R$. For any $\nu,\omega\in\N$, let  $\epsilon_1,\epsilon_s$. Then we define the parabolic cylinder of dimension $k-1:=2s+\omega-1$ and radius $c$ as
\[
\mathcal{P}^{k-1}(c):=\mathcal{P}^{k-1}(\{\epsilon_i\}_{i=1}^{\nu},\omega):=\{(\bar{t},\bar{x})\in\R_\nu^k \;:\; \sum_{i=1}^\nu (t_i-\epsilon_i x_i)^2 + 2\sum_{j=\nu+1}^{\omega}x_j=  c \}.
\]  
With this notation, the level sets of $\varphi$ are given by 
\[
\varphi^{-1}(c)=\left\{\begin{tabular}{cc}
$\mathcal{P}^{2s+\omega-1}\times\R^{m-2s-\omega}$ & if $\omega < m-2s$\\
$\mathcal{P}^{m-1}$ & if $\omega = m-2s$,
\end{tabular}\right.
\]
i.e., they are all parallel parabolic cylinders.

\section{Some remarks about global isoparametric functions in the De Sitter space form}\label{App:DeSitter}
 
Let $\varphi:\S_s^m\rightarrow\R$ be an isoparametric function given in Example \ref{Ex:IsoparametricDeSitter}. In what follows, $M_c:=\varphi^{-1}(c)$ for $c\neq\pm$ and $M_{\pm}=\varphi^{-1}(\pm1)$. We will next sketch the main properties of these functions and describe the possible isoparametric hypersurfaces $M_c$ and focal varieties $M_\pm$ they define.

\subsection{Linear examples.}

For $Q\in \S_s^m$, the function $\Phi(z):=\langle Q,z \rangle$ is a semi-Riemannian Cartan-M\"{u}nzner polynomial in $\R_s^{m+1}$ of degree $\ell=1$ and $\beta=0$, therefore $\varphi:=\Phi\upharpoonright_{\S_s^m}$ is isoparametric. As any hyperplane intersects $\S_s^m$, Im$\,\varphi=\R$ and the only critical values of this function are $c=\pm 1$. The isoparametric hypersurfaces, which correspond to the regular level sets of $\varphi$ are given by the intersection of the planes $\mathcal{L}_{Q,c}:=\{z\in\R_s^{m+1}\;:\;\langle Q,z \rangle = c\}$ and $\S_s^{m}$. Actually, this hypersurfaces have only $\ell=1$ principal curvature of multiplicity $m_1=m_2=m-1$. Hence, the classification theorem for proper hypersurfaces in $\S_s^m$ \cite{AbKoYa} yields that, up to isometries, they must be homothetic to $\S_s^{m-1}$, to $\S_{s-1}^{m-1}$ or to $\H_{s-1}^{m-1}$. On the other hand, the focal varieties $M_{\pm}:=\varphi^{-1}(\pm)$ may not be submanifolds of $\S_s^{m}$, but algebraic varieties. In order to get a better understanding of the hypersurfaces and focal varieties in the linear example, suppose that $Q=e_{n+1}$, then if $M_c:=\varphi^{-1}(c)$, we have that
\[
M_c=\left\{\begin{tabular}{cc}
$\S_s^{m-1}$ & if $-1<c<1$\\
$\H_{s-1}^{m-1}$ & if $\vert c\vert > 1$,
\end{tabular}\right.,
\]
while the focal varieties are the null cones
\[
M_{\pm}:=\mathcal{C}^{m-1}_s\times\{\pm 1\}.
\]
Notice that neither $M_-$ nor $M_+$ are submanifolds of $\S_s^m$ and that their dimension as algebraic varieties is $m-1\neq 0= n_i$, where $n_i := (m-1)- m_i$ for $i=1,2$.

\subsection{Quadratic examples.} For simplicity, we will only consider the case in which $A\in\text{Sym}(\R_s^m)$ has a minimal polynomial of the form $t^2+a t + b$ with $a,b\in\R$ satisfying $a^2-4b>0$. This implies that the shape operator is diagonalizable with exactly $\ell=2$ distinct real principal curvatures, with corresponding multiplicities $m_1$ and $m_2$. Again, by the classification theorem for proper  hypersurfaces in semi-Riemannian real space forms \cite{AbKoYa}, the isoparametric hypersurfaces are congruent to $\S_s^{r}(c_1)\times\S_{s-r}^{m-1-r}(c_2)$ or to $\S_r^{r}(c_1)\times\H^{m-1-r}_{s-r-1}$ for some $r<m-1$ and $c_1,c_2\in\R$ satisfying $c_1+c_2=1$ with $c_1,c_2>0$ in the first case and $c_1>0, c_2<0$ in the second. 

We fix now a concrete example which is given by a Cartan-M\"{u}nzner polynomial. Fix $k_1>\max{s,2}$ and $k_2>2$ such that $k_1+k_2=m+1$ and consider $A\in$Sym$\R^{m+1}_s$ given by the diagonal matrix
\[
A=\text{diag}(\underbrace{-1,\ldots,-1}_{k_1},\underbrace{1,\ldots,1}_{k_2}).
\]
Then, the polynomial $\Phi(z):=\langle Az,z \rangle$ is a Cartan-M\"{u}nzner polynomial of degree $\ell=2$ and $\beta=\frac{k_2-k_1}{2}$, implying that $\varphi:=\Phi\upharpoonright_{\S_s^m}$ is isoparametric. A straightforward computation yields that Im$\,\varphi=[1,\infty)$ and that $c=\pm1$ are the only critical values of $\varphi$. Since the minimal polynomial of $A$ is $t^2-1$, then the previous remarks yield that the isoparametric hypersurfaces $M_c:=\varphi^{-1}(c)$ have $\ell=2$ distinct principal curvatures with multiplicities $m_1=(m-1)-(k_1-1)=k_2-1$ and $m_2=(m-1)-(k_2-1)=k_1-1$. Notice that $\frac{m_1+m_2}{2}=\frac{k_1+k_2-2}{2}=\frac{m-1}{\ell}$. If $c=\cosh 2t$ with $t>0$, recalling that $\cosh^2 t+\sinh^2 = \cosh 2t$ and that $\cosh^2t-\sinh^2t=1$, the isoparametric hypersurface $M_c$ is isometric to $\H^{k_1-1}_{s-1}(\sinh t)\times\S^{k_2-1}(\cosh t)$. For $-11<c<1$, taking $c=\cos 2t$ for some $t\in(0,\pi/2)$, then $M_c$ is isometric to $\S^{k_1-1}_s(\sqrt{1-\cos 2t})\times\S^{k_2-1}(\sqrt{1+\cos2t})$. Notice that $n_1:=(m-1)-m_1=k_1-1$ and $n_2=(m-1)-m_2=k_2-1$ are the dimensions of each factor. The focal varieties are $M_+=\mathcal{C}_s^{k-1}\times\S^{k_2-1}(\sqrt{2})$ and $M_-=\S^{k_1-1}_s(\sqrt{2})\times\{0\}$. Here it is evident that the focal variety $M_+$ is not a submanifold of $\S_s^m$ and that its dimension does not coincide with $n_1=k_1-1$, while $M_-$ is a nondegenerate submanifold of dimension $n_2=k_2-1$.

\subsection{Clifford examples.} Let $m=2k-1$ and consider a Clifford system $(P_1,\ldots,P_n)$ of signature $(n,r)$. Define $m_1:=n-1$ and $m_2:=k-n$, and let $\eta_j=-1$ if $j\leq r$ and $\eta_j=1$ if $r<j$.  Then the function $\Phi(z):=\langle z,z \rangle^2 - 2\sum_{j=1}^{n}\eta_{j}\langle P_jz,z \rangle^2$ is a Cartan-M\"{u}nzner polynomial of degree $\ell=4$ and $\beta=\frac{m_2-m_1}{2}=\frac{k+1-2n}{2}$, see \cite{Ha}. Also notice that $\frac{m_1+m_2}{2}=\frac{k-1}{2}=\frac{m-1}{\ell}$. Depending on $r$ and $n$, the image of the isoparametric function $\varphi=\Phi\upharpoonright_{\S_s^m}$ may be $\R$, $(-\infty,-1]$, $(-\infty,1]$, $[-1,\infty)$ and $[1,\infty)$, and all these options are possible \cite{Ha1}. The only critical values of $\varphi$ are $c=\pm 1$ and it can be proved (see \cite{Ha}) that the focal varieties $M_\pm:=\varphi^{-1}(\pm1)$ are nondegenerate submanifolds of $\S_s^m$. Therefore, we can consider the normal space at $z\in M_\pm$, which we denote by $N_zM_\pm$. Set
\[
BM_{\pm}(\delta):=\{(z,V)\;:\; z\in M_\pm, V\in NM_z, \langle V,V \rangle=\delta \}, \delta=\pm1
\]
and. 

We next define a \emph{De Sitter sphere bundle.} Consider the Clifford span
 \[
 \Sigma:= \text{span}\{P_1,\ldots,P_n\}\subset\text{Sym}(\R_s^{2k})
 \]
and define $\Sigma(\pm1):=\{P\in\Sigma \;:\; \langle P,P \rangle=\pm 1 \}$. Let $E_\pm(P)=\ker(P\mp 1)$ be the eigenspaces of $P\in\Sigma$ and for $P\in\Sigma(+1)$ set $S(P):=E_+(P)\cap\S^{2k-1}_s$. We define
\[
\Sigma^\ast:=\{ P\in\Sigma(+1) \;:\; S(P)\neq\emptyset \} \ \text{ and }\ 
\Gamma:= \{ (P,z) \;:\; P\in\Sigma^\ast, z\in S(P) \}.
\]
Then the map $\pi:\Gamma\rightarrow\Sigma^\ast$ given by $\pi(P,z)=P$ is a De Sitter sphere bundle over the \emph{Clifford sphere } $\Sigma^\ast$.

If $M$ is a nondegenerate hypersurface of $\S_s^m$, let $\xi$ be a normal unitary field on $M$ and define $\delta:=\langle \xi,\xi \rangle = Ric(\xi,\xi)\in\{\pm1\}$. When $\delta=1$, we will say that $M$ is of elliptic type and if $\delta=-1$, we will say that $M$ is of hyperbolic type. The main topological and geometric characteristics of the isoparametric hypersurfaces and focal submanifolds for the Clifford examples are resumed in the following theorem. We refer the interested reader to the article \cite{Ha} or the Ph.D. thesis \cite{Ha1} for the proof.

\begin{theorem}
\begin{enumerate}
\item The focal varieties are given by
\[
M_-=\{z\in\S^{2k-1}_s\;:\; z=Pz\text{ for some }P\in\Sigma\}
\] and 
\[
M_+=\{z\in\S^{2k-1}_s\;:\; \langle P_jz,z \rangle = 0, j=1,\ldots,n\}.
\]
\item $M_-$ is a nondegenerate submanifold of $\S_s^{2k-1}$ of codimension $m_2+1$ and diffeomorphic to the total space $\Gamma$ of the De Sitter sphere bundle. The focal variety $M_+$ is a nondegenerate submanifold of codimension $m_1+1$ in $\S^{2k-1}_s$ and trivial normal bundle.
\item Let $c\in\R\smallsetminus\{-1,+1\}$. Then $M_c$ is diffeomorphic to $TB_-(+1)\approx TB_+(+1)$ when $-1<c<1$, to $TB_-(-1)$ when $c<-1$ and to $TB_+(-1)$ when $c>1$. 
\item The hypersurfaces $M_c$, $c\neq\pm 1$ have four distinct principal curvatures $k_1,\ldots k_4$ with multiplicities $(m_1,m_2,m_1,m_2)$. When $-1<c<1$, the hypersurface is of elliptic type and all the principal curvatures are real, while if $\vert c\vert >1$, then the hypersurface is of hyperbolic type, $k_1,k_2$ are real and $k_2,k_4$ are complex.
\end{enumerate}
\end{theorem}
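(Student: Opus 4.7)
The plan is to deduce the four assertions from three ingredients: the critical-point equations for $\varphi$ on $\S_s^{2k-1}$, the Clifford relations $P_iP_j+P_jP_i=2\eta_{ij}$, and the Cartan--M\"unzner identity $\langle\grad^S\varphi,\grad^S\varphi\rangle=16(1-\varphi^2)$ already established, which in particular forces $\pm 1$ to be the only critical values and $M_{\pm}$ to be nondegenerate subsets of $\S_s^{2k-1}$ by the general isoparametric theory.

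For (1), I would first compute the ambient gradient
\[
\grad^L\Phi(z)=4\langle z,z\rangle z-8\sum_{j=1}^n\eta_{jj}\langle P_jz,z\rangle P_jz,
\]
so that on $\S_s^m$ the critical points of $\varphi$ are exactly those $z$ for which $\sum_j\eta_{jj}\langle P_jz,z\rangle P_jz$ is parallel to $z$. Pairing this identity with each $P_iz$ and reducing via the Clifford relation splits the analysis into two algebraic cases: either $\langle P_jz,z\rangle=0$ for every $j$, which via $\varphi(z)=1-2\sum_j\eta_{jj}\langle P_jz,z\rangle^2$ is exactly $M_+=\varphi^{-1}(1)$; or, setting $P$ to be the natural normalization of $\sum_j\eta_{jj}\langle P_jz,z\rangle P_j$, one has $P\in\Sigma(+1)$ and $Pz=z$, which gives $\varphi=-1$, i.e.\ $M_-$. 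The key computation $P^2=\mathrm{Id}$ uses the symmetric part of the Clifford bracket together with the constraint $\varphi(z)=-1$.

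For (2), I would establish the codimensions and the bundle structures by inspecting these defining equations. On $M_+$, the $n$ functions $f_j(z):=\langle P_jz,z\rangle$ have differentials proportional to $P_jz$; the Clifford relation shows that $\{P_jz\}_{j=1}^n$ is a nondegenerate $n$-frame orthogonal to $T_zM_+$, yielding codimension $n=m_1+1$ and a global trivialization of the normal bundle by $z\mapsto(P_1z,\ldots,P_nz)$. On $M_-$, the assignment $z\mapsto P(z)$ from (1) lands in $\Sigma^*$ and its fiber over $P\in\Sigma^*$ is precisely $E_+(P)\cap\S_s^{2k-1}=S(P)$; once one checks that this is a smooth submersion, the identification $M_-\cong\Gamma$ as a De Sitter sphere bundle follows, with codimension $m_2+1$. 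For (3), the gradient flow of $\varphi$ furnishes diffeomorphisms between any two regular level sets in the same component of $\R\smallsetminus\{-1,1\}$, and the normal exponential map identifies a neighbourhood of $M_\pm$ with a tube inside $NM_\pm$. The sign of the normal direction is forced by the ambient signature, which is precisely the datum recorded by $TB_\pm(\pm 1)$: the three identifications follow, with the double identification $TB_-(+1)\approx TB_+(+1)$ coming from the fact that the component $-1<c<1$ has \emph{both} $M_\pm$ as boundary strata of its compactification.

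For (4), the shape operator along $\xi=\grad^S\varphi/\sqrt{|\langle\grad^S\varphi,\grad^S\varphi\rangle|}$ is read off the Hessian of $\Phi$; the decomposition of $T_z\R_s^{2k}$ induced by the distinguished Clifford element $P(z)$ from (1) together with an auxiliary element $P'\in\Sigma$ anticommuting with $P$ splits $T_zM_c$ into four eigenspaces of $A_\xi$ of dimensions $(m_1,m_2,m_1,m_2)$. Parametrising $c=\cos 4\tau$ in the elliptic regime $-1<c<1$ produces four real principal curvatures shifted by multiples of $\pi/4$; parametrising $c=\cosh 4\tau$ in the hyperbolic regime $|c|>1$ preserves the spacelike pair as real expressions but turns the transverse pair into complex conjugates, as claimed. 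The main obstacle will be the submersion property in (2): proving that the map $z\mapsto P(z)$ is globally well-defined and smooth on $M_-$, i.e.\ that $P$ is uniquely determined by $z$ and lies in $\Sigma^*$ rather than in the degenerate stratum of $\Sigma(+1)$, is a purely algebraic question about intersections of eigenspaces of elements of the Clifford span in which the signature $(n,r)$ intervenes essentially; the remaining tubular-neighborhood, gradient-flow, and shape-operator computations are standard once this step is secured.
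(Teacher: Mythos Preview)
The paper does not give its own proof of this theorem. Immediately before stating it, the authors write: ``We refer the interested reader to the article \cite{Ha} or the Ph.D. thesis \cite{Ha1} for the proof.'' The theorem is quoted from Hahn's work as background for Appendix~\ref{App:DeSitter}; nothing further is argued in the paper itself.

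Your proposal therefore goes well beyond what the paper does. As a sketch of how Hahn's argument proceeds, your outline is broadly on the right track: the identification of $M_\pm$ via the critical-point equation for $\varphi$, the trivialization of $NM_+$ by the frame $(P_1z,\ldots,P_nz)$, the map $z\mapsto P(z)$ realising $M_-$ as the total space $\Gamma$, and the tube/gradient-flow picture for (3) are all standard in this setting and are essentially what appears in \cite{Ha,Ha1}. You correctly flag the one genuinely delicate point---well-definedness and smoothness of $z\mapsto P(z)$ on $M_-$, which hinges on showing that the normalised combination $\sum_j\eta_{jj}\langle P_jz,z\rangle P_j$ lands in $\Sigma^\ast$ and not in a degenerate stratum; in the semi-Riemannian signature this is where the work lies and where Hahn's arguments are needed in full. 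If your goal is only to match the paper, a one-line citation to \cite{Ha,Ha1} suffices; if you want a self-contained proof, your plan is reasonable but you should expect the signature-dependent algebra in (2) and the eigenspace decomposition in (4) to require substantially more detail than the sketch indicates.
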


\section{An existence and uniqueness result for singular second order ODE's}\label{App:ExistenceUniquenessSingular}

We next give here an existence and uniqueness result for a large class of singular nonlinear ODE's.

Let $f:\R\rightarrow\R$ be locally Lipschitz continuous, let $t_0\in\R$ and $T>0$. Let $\rho,q\in C^{1}([t_0-T,t_0+T])$ and consider the problem
\begin{equation}\label{Eq:ExistenceUniquessProblem}
\left\{\begin{tabular}{cc}
$(\rho v')' = qf(v)$ & in $[t_0-T,t_0+T]$\\
$v(t_0)=d_0 \ \  v'(t_0)=c_0$. &
\end{tabular}\right.
\end{equation}

\begin{proposition}\label{Prop:ExistenceUniqueness}
Suppose $\rho,q\geq0$ in $[t_0-T,t_0+T]$, then
\begin{itemize}
\item If $\rho(t_0)\neq0$, for any $d_0,c_0\in \R$ there exists $0<\delta\leq T$ such that the problem \eqref{Eq:ExistenceUniquessProblem} has a unique solution defined in $[t_0-\delta,t_0+\delta]$.
\item If $\rho(t_0)=0$, $\rho(t)\neq0$ for $t\neq t_0$, $\lim_{t\rightarrow t_0}\frac{q(t)}{\rho'(t)}\in\R$ and if there exists $N>0$ such that
\[
\frac{1}{\rho(t)}\int_{t_0}^tq(s)ds\leq N,\ \text{ for every }t\in[t_0-T,t_0+T],
\]
then, for any $d_0\in\R$ and for $c_0:=\frac{q(t_0)}{\rho'(t_0)}f(d_0)$, problem \eqref{Eq:ExistenceUniquessProblem} has a unique solution defined in $[t_0-\delta,t_0+\delta]$ for some $0<\delta\leq T$.
\end{itemize}
\end{proposition}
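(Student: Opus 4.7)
\textbf{Proof proposal for Proposition \ref{Prop:ExistenceUniqueness}.}

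For the first case ($\rho(t_0)\neq 0$) the equation is regular. By continuity of $\rho$ there is a neighborhood of $t_0$ on which $\rho>0$, so I rewrite the ODE in normal form as $v'' = -(\rho'/\rho)\,v' + (q/\rho)\,f(v)$. The right-hand side is continuous in $t$ and locally Lipschitz in $(v,v')$ (since $f$ is locally Lipschitz and $\rho'/\rho$, $q/\rho$ are continuous near $t_0$), so the standard Picard--Lindel\"of theorem for systems produces a unique $C^2$ solution on some $[t_0-\delta,t_0+\delta]$.

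The interesting case is $\rho(t_0)=0$. The plan is to convert to an integral equation and apply Banach's fixed-point theorem. Integrating $(\rho v')'=qf(v)$ once from $t_0$ to $t$ and using $\rho(t_0)=0$ yields $\rho(t)v'(t)=\int_{t_0}^t q(s)f(v(s))\,ds$; since $\rho(t)\neq 0$ for $t\neq t_0$, one can divide and integrate once more to obtain
\[
v(t)=d_0+\int_{t_0}^{t}\frac{1}{\rho(\tau)}\int_{t_0}^{\tau}q(s)f(v(s))\,ds\,d\tau.
\]
The prescribed value $c_0=(q(t_0)/\rho'(t_0))f(d_0)$ appears naturally as the L'H\^opital limit
\[
\lim_{t\to t_0} v'(t)=\lim_{t\to t_0}\frac{\int_{t_0}^{t} q(s)f(v(s))\,ds}{\rho(t)}=\lim_{t\to t_0}\frac{q(t)f(v(t))}{\rho'(t)}=\frac{q(t_0)}{\rho'(t_0)}f(d_0),
\]
which is justified by the hypothesis that $\lim_{t\to t_0}q(t)/\rho'(t)\in\R$ together with continuity of $v$ and $f$. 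So any classical $C^2$ solution to \eqref{Eq:ExistenceUniquessProblem} must satisfy the integral equation, and conversely a continuous fixed point of the integral operator is automatically $C^2$ and solves the ODE with the correct initial data.

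To produce the fixed point, pick $r>0$, set $M:=\sup_{|w-d_0|\leq r}|f(w)|$ and let $L$ be the Lipschitz constant of $f$ on $[d_0-r,d_0+r]$. On the closed ball $X_\delta:=\{v\in C([t_0-\delta,t_0+\delta]):\|v-d_0\|_\infty\leq r\}$ consider
\[
T(v)(t):=d_0+\int_{t_0}^{t}\frac{1}{\rho(\tau)}\int_{t_0}^{\tau}q(s)f(v(s))\,ds\,d\tau.
\]
The hypothesis $\rho(\tau)^{-1}\int_{t_0}^{\tau}q(s)\,ds\leq N$ (read with the appropriate sign convention on each side of $t_0$, since $\rho,q\geq 0$ and $\rho(\tau)\neq 0$ for $\tau\neq t_0$) immediately gives the self-map bound $|T(v)(t)-d_0|\leq MN|t-t_0|\leq MN\delta$ and the contraction bound $|T(v_1)(t)-T(v_2)(t)|\leq LN\delta\|v_1-v_2\|_\infty$. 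Choosing $\delta>0$ so small that $MN\delta\leq r$ and $LN\delta<1$, Banach's theorem yields a unique fixed point in $X_\delta$. Uniqueness among \emph{all} solutions of \eqref{Eq:ExistenceUniquessProblem} with the given initial data follows because any such solution lies in $X_\delta$ after shrinking $\delta$ further (by continuity from $v(t_0)=d_0$) and must satisfy the same integral equation.

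The main obstacle is the singularity at $t_0$, which prevents a direct application of Picard--Lindel\"of; it is bypassed precisely by the averaging estimate $\rho(\tau)^{-1}\int_{t_0}^{\tau}q(s)\,ds\leq N$, which is what controls the singular factor $1/\rho(\tau)$ inside $T$. A secondary technical point is that the argument must run symmetrically on $[t_0-\delta,t_0]$ and $[t_0,t_0+\delta]$; here one uses $|\int_{t_0}^{t} q|/\rho(t)\leq N$ on both sides, which is the natural interpretation of the hypothesis given that $q\geq 0$ and $\rho\geq 0$ vanishes only at $t_0$.
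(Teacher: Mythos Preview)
Your proof is correct and follows essentially the same route as the paper: both reduce the singular case to the integral equation $v(t)=d_0+\int_{t_0}^{t}\rho(\tau)^{-1}\int_{t_0}^{\tau}q(s)f(v(s))\,ds\,d\tau$, apply Banach's fixed-point theorem on a closed ball in $C^0$ using the bound $\rho(\tau)^{-1}\int_{t_0}^{\tau}q\leq N$ to control the singular factor, and recover $v'(t_0)$ via L'H\^opital. Your version is in fact a bit more explicit than the paper's on two points---the equivalence between classical solutions and fixed points of the integral operator, and the two-sided reading of the hypothesis on $[t_0-\delta,t_0]$ versus $[t_0,t_0+\delta]$---but there is no substantive difference in method.
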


\begin{proof}
The proof for $\rho(t_0)\neq 0$ is a standard application of the contraction mapping theorem, taking $\delta>0$ in such a way that $\rho>0$ in $[t_0-\delta,t_0+\delta]$. We proceed to the proof of the case $\rho(t_0)=0$.

As $f$ is locally Lipschitz continuous, there exists $C>0$ and $\delta_0>0$ such that if $d_1,d_2\in[d_0-\delta_0,d_0+\delta_0]$, then $\vert f(d_1)-f(d_2)\vert \leq C\vert d_1-d_2 \vert$. As $f$ is continuous in $[t_0-\delta,t_0+\delta]$, there exists $M>0$ such that $\vert f(d)\vert\leq M$ for every $d\in[t_0-\delta_0,t_0+\delta_0]$. Define $\delta:=\min\{\frac{\delta_0}{2MN},\frac{1}{4CN},T\}$ and consider 
\[
X:=\{v\in C^0([t_0-\delta,t_0+\delta]) \;:\; \Vert v - d_0\Vert_\infty \leq 2MN\delta \}
\]  
Thus, is $(X,\Vert\cdot\Vert_\infty)$ is a complete metric space, for it is just the closed ball with center in $v\equiv d_0$ and radius $2MN\delta$ in $(C^0([t_0-\delta,t_0+\delta],\Vert\cdot\Vert_\infty)$. Observe that $v\in X$ implies that $v(t)\in[d_0-\delta_0,d_0+\delta_0]$ for every $t\in[t_0-\delta,t_0+\delta]$.
Define $S:X\rightarrow X$ given by
\[
S(v)(t):= d_0 + \int_{t_0}^t\frac{1}{\rho(s)}\left\{ \int_{t_0}^s q(r)f(v(r)) dr \right\}ds.
\]
By the uniform bound for $P(t):=\frac{1}{\rho(t)}\left\{ \int_{t_0}^t q(r)f(v(r)) dr \right\}$ in $[t_0-T,t_0+T]$ and the definition of $\delta$, a well known argument yields that $S$ is a well defined contraction. Hence, the contraction mapping theorem implies the existence of a unique $v\in X$ such that $S(v)=v$,  giving a unique  solution to the problem \eqref{Eq:ExistenceUniquessProblem} with initial condition $v(t_0)=d_0$. To see the initial condition on the derivative, taking the derivative in $S(v)(t)=v(t)$ we get that
\[
v'(t)=\frac{\int_{t_0}^tq(s) f(v(s)) ds}{\rho(t)}.
\] 
By L'H\^{o}pital's rule and the existence of the limit $\lim_{t\rightarrow t_0}\frac{q(t)}{\rho'(t)}$ we conclude that
\[
v'(t_0)=\lim_{t\rightarrow t_0}\frac{\int_{t_0}^tq(s) f(v(s)) ds}{\rho(t)}=\lim_{t\rightarrow t_0}\frac{q(t) f(v(t)) }{\rho'(t)}=\frac{q(t_0)  }{\rho'(t_0)}f(d_0).
\]
Therefore $v$ satisfies the problem \eqref{Eq:ExistenceUniquessProblem} with the natural boundary condition for the derivative.
\end{proof}

\end{document}